\documentclass[12pt,a4paper,reqno]{amsart} 
\pagestyle{plain}
\usepackage{footnote}
\usepackage{amssymb}
\usepackage{latexsym}
\usepackage{amsmath}
\usepackage{mathrsfs}
\usepackage[X2,T1]{fontenc}
\usepackage[applemac]{inputenc}
\usepackage{cite}
\usepackage{calc}                   


\newcommand{\scal}[2]{\langle #1,#2\rangle}
\newcommand{\rr}[1]{\mathbf R^{#1}}

\newcommand{\mabfr}{{\boldsymbol r}}

\newcommand{\cc}[1]{\mathbf C^{#1}}
\newcommand{\nm}[2]{\Vert #1\Vert _{#2}}

\newcommand{\sets}[2]{\{ \, #1\, ;\, #2\, \} }
\newcommand{\Sets}[2]{\left \{ \, #1\, ;\, #2\, \right \} }

\newcommand{\ep}{\varepsilon}
\newcommand{\fy}{\varphi}
\newcommand{\cdo}{\, \cdot \, }
\newcommand{\supp}{\operatorname{supp}}

\newcommand{\eabs}[1]{\langle #1\rangle}     

\newcommand{\vrum}{\vspace{0.1cm}}

\newcommand{\intrd}{\int _{\rd }}

\newcommand{\rd}{\mathbf{R} ^{d}}

\newcommand{\im}{i}

\newcommand{\nn}[1]{{\mathbf N}^{#1}}
\newcommand{\maclA}{\mathcal A}

\newcommand{\maclH}{\mathcal H}

\newcommand{\maclS}{\mathcal S}
\newcommand{\mascB}{\mathscr B}

\newcommand{\mascE}{\mathscr E}
\newcommand{\mascF}{\mathscr F}
\newcommand{\mascP}{\mathscr P}
\newcommand{\mascS}{\mathscr S}
\newcommand{\bsySig}{\boldsymbol \Sigma}
\newcommand{\bsycalA}{\boldsymbol {\mathcal A}}
\newcommand{\bsycalS}{\boldsymbol {\mathcal S}}

\setcounter{section}{\value{section}-1}   

\numberwithin{equation}{section}          

\newtheorem{thm}{Theorem}
\numberwithin{thm}{section}

\newcommand{\rubrik}{}
\newtheorem{prop}[thm]{Proposition}
\newtheorem{cor}[thm]{Corollary}
\newtheorem{lemma}[thm]{Lemma}

\theoremstyle{definition}

\newtheorem{defn}[thm]{Definition}
\newtheorem{example}[thm]{Example}

\theoremstyle{remark}
\newtheorem{rem}[thm]{Remark}

\author{Joachim Toft}

\address{Department of Mathematics,
Linn{\ae}us University, V{\"a}xj{\"o}, Sweden}

\email{joachim.toft@lnu.se}

\title{Images of function and distribution spaces
under the Bargmann transform}

\keywords{Gelfand-Shilov estimates, Pilipovi{\'c} spaces
ultradistributions, Bargmann transform}

\subjclass[2010]{primary 46F05; 32A25; 32A36;
secondary 35Q40; 30Gxx}

\frenchspacing

\begin{document}


\begin{abstract}
We consider a broad family of test function spaces and their dual (distribution) space.
The family includes Gelfand-Shilov spaces, a family of test function spaces
introduced by S. Pilipovi{\'c}. We deduce different characterizations of such spaces,
especially under the Bargmann transform and the Short-time Fourier transform.
The family also include a test function space, whose dual space is mapped
by the Bargmann transform bijectively to the set of entire functions.
\end{abstract}

\maketitle

\par

\section{Introduction}\label{sec0}

\par

In the paper we consider a family of test function and distribution spaces,
including Gelfand-Shilov spaces. The family also include a family of functions and
distribution spaces,
named Pilipovi{\'c} test function and distribution spaces, respectively,
essentially introduced by S. Pilipovic in \cite{Pil2}. We show that those
spaces which are invariant under the Fourier transform may in convenient ways
be characterized by Hermite function expansions with simple conditions on the
coefficients. Furthermore we deduce that the Bargmann
transform maps these spaces into topological spaces of entire functions
or power series expansions, obeying
convenient estimates. At the same time, characterizations of these
function and distribution spaces in terms of suitable estimates of their short-time Fourier
transforms with Gaussian windows are deduced.

\par

We also
introduce a test function space, whose dual, or distribution space,
is mapped bijectively to the set of entire functions, by the Bargmann transform. 
Finally we introduce a broad family of modulation spaces as subsets
of the latter distribution space, and show that these modulation spaces
are quasi-Banach spaces. Here it is also shown that Gelfand-Shilov and
suitable Pilipovi{\'c}
spaces of functions and distributions are obtained by
suitable unions or intersections of this broad family of modulation spaces.

\par

One of the motivation for performing such investigations concerns questions on
global and detailed regularity properties for solutions to partial differential equations.
An other motivation is to find suitable test fundtion spaces which are dense and
smaller than the Schwartz space, $\mascS (\rr d)$, and which possess suitable
mapping properties under the Fourier transform. The corresponding distribution
spaces then become larger than the set of tempered distributions, $\mascS '(\rr
d)$, and the mapping properties for the Fourier transform carry over to these
distribution spaces. For these reasons, the function and distribution
spaces above might put convenient frames.

\par

In such contexts, the Gelfand-Shilov spaces, and spaces similar to them, are often feasible.
An important type of such spaces concern $\maclS _s(\rr d)$ ($\Sigma _s(\rr d)$), $s\ge 0$,
and consists of all smooth functions $f$ on $\rr d$ such that
\begin{equation}\label{EqGSDef}
\sup |x^\beta D^\alpha f(x)| \lesssim h^{|\alpha +\beta |}(\alpha !\beta !)^s
\end{equation}
for some (for every) $h>0$ (cf. \cite{GS}). (See \cite{Ho1} and Section \ref{sec1}
for notations.)

\par

Evidently, $\Sigma _s \subseteq \maclS _s$
and $\maclS _{s_0}\subseteq \mascS$ when $s_0\ge 0$. If in addition $s>\frac 12$ and
$s_0\ge \frac 12$, then these embeddings are dense, and therefore,
$\maclS _s'\subseteq \Sigma _s'$ and $\mascS '\subseteq \maclS _{s_0}'$ hold true for
corresponding distribution spaces. On the other hand, if $s\le \frac 12$ and
$s_0<\frac 12$, then $\Sigma _s$ and $\maclS _{s_0}$ are trivial, i.{\,}e.
they are equal to $\{ 0 \}$ (cf. \cite{GS}).

\par

There are also several other characterizations for Gelfand-Shilov spaces. For example
we have the following.

\par

\begin{prop}\label{Introprop}
Let $s\ge \frac 12$ ($s>\frac 12$), $f\in \mascS '(\rr d)$ and let
$c_{\alpha}(f)$ be the Hermite coefficient of order $\alpha$ of $f$. Then the following
conditions are equivalent:
\begin{enumerate}
\item $f\in \maclS _s(\rr d)$ ($f\in \Sigma _s(\rr d)$);

\vrum

\item $|f(x)|\lesssim e^{-r|x|^{\frac 1s}}$ and $|\widehat f(\xi )|\lesssim
e^{-r|\xi |^{\frac 1s}}$ for some (for every) $r>0$;

\vrum

\item $|c_\alpha (f)|\lesssim e^{-r|\alpha |^{\frac 1{2s}}}$ for some (for every)
$r>0$.
\end{enumerate}
\end{prop}

\par

Characterizations of the form given in Proposition \ref{Introprop} might be suitable
in applications and can be carried over to other situations. For example, in \cite{DaRu},
Dasgupta and Ruzhansky establish similar equivalences for functions
defined on compact Lie groups, after the Gelfand-Shilov spaces and the conditions
on $s$ are replaced by suitable Gevrey classes and $s>0$, respectively.

\par

The equivalence between (1) and (2) in the previous proposition were deduced
by Eijndhoven in \cite{Eij}, and later on extended by Chung, Chung and Kim in
\cite{ChuChuKim}. The equivalence between (1) and (3) can in an extended
form be found in e.{\,}g. \cite{LozPer}.

\medspace

The Pilipovi{\'c} space, $\bsycalS _{\! s}(\rr d)$ ($\bsySig _s(\rr d)$)\footnote{Throughout
the paper it is important to distinguish between the boldface 
characters, $\bsySig _*$, $\bsycalS _{\! *}$ etc. which
denote Pilipovi{\'c} spaces, and non-boldface characters,
$\Sigma _*$, $\maclS _*$ etc. which denote Gelfand-Shilov
spaces.}
of order $s\ge 0$, introduced in \cite{Pil2} in the case $s\ge \frac 12$, is
the set of all smooth functions $f$ on $\rr d$ such that
\begin{equation}\label{EqPilSpaceDef}
\nm {H^Nf}{L^\infty} \lesssim h^N(N!)^{2s},
\end{equation}
for some (for every) $h>0$, where $H=|x|^2-\Delta$ is the harmonic oscillator. That is,
when defining $\bsycalS _{\! s}(\rr d)$ and $\bsySig _s(\rr d)$, each $x_j$ and
$D_j$ in \eqref{EqGSDef}, in the definition of $\maclS _s(\rr d)$ and
$\Sigma _s(\rr d)$ should be replaced by $H^{\frac 12}$ in \eqref{EqPilSpaceDef}.

\par

It might not be surprising that $\bsycalS _{\! s}$ and $\bsySig _{s}$
are similar to $\maclS _s$ and $\Sigma _s$, respectively, when taking
into account the similarities in the definitions of these spaces.

\par

In fact, we have
\begin{alignat}{3}
\bsycalS _{\! s}(\rr d) &=\maclS _s(\rr d), &
\quad &\text{when}&\quad s &\ge \frac 12 ,\label{InclSpaces01}
\\[1ex] 
\bsySig _{s}(\rr d) &=\Sigma _s (\rr d), &
\quad &\text{when}&\quad  s &> \frac 12 \ \text{or}\ s=0,\label{InclSpaces02}
\\[1ex]
\bsycalS _{\! s}(\rr d) &\neq \{ 0 \} , \ \maclS _s(\rr d) = \{ 0 \} ,&
\quad &\text{when} & \quad s&<\frac 12, \label{InclSpaces03}
\intertext{and}
\bsySig _{s}(\rr d) &\neq \{ 0 \} ,\  \Sigma _s(\rr d) = \{ 0 \} , &
\quad &\text{when} &\quad s&\le \frac 12 ,\ s\neq 0.  \label{InclSpaces04}
\end{alignat}
Here \eqref{InclSpaces04} in the case $s=\frac 12$, \eqref{InclSpaces01}
and \eqref{InclSpaces02} was proved already in  \cite{Pil2}.
Furthermore, the arguments in \cite{Pil2} also show that \eqref{InclSpaces03}
and \eqref{InclSpaces04} hold for any admissible $s$,
since $\bsycalS _{\! s}(\rr d)$ and $\bsySig _{s}(\rr d)$ contain
all Hermite functions on $\rr d$ for such choices of $s$.

\par

The set $\bsySig (\rr d) =
\bsySig _{\frac 12}(\rr d)$ is of
peculiar interests, because it is the largest test function space among the sets
$\bsySig _{s}(\rr d)$ and $\bsycalS _{\! s}(\rr d)$ which at the same time is
\emph{not} a Gelfand-Shilov space.

\par

In Section \ref{sec2} we introduce a family of test function spaces,
denoted by $\maclH _s (\rr d)$ and $\maclH _{0,s}(\rr d)$, $s\ge 0$,
and contains all smooth functions $f$ on $\rr d$ such that
%
its Hermite coefficients satisfy 
$$
|c_\alpha (f)|\lesssim e^{-r|\alpha |^{\frac 1{2s}}}
$$
for some (for every) $r>0$. By Proposition \ref{Introprop}
it follows that $\maclH _s = \maclS _s$ when $s\ge \frac 12$
and $\maclH _{0,s} = \Sigma _s$ when $s>\frac 12$.
In Section \ref{sec4} we extend these relations into $\maclH _s =\bsycalS _{\! s}$
and $\maclH _{0,s}=\bsySig _s$ for every $s>0$.

\par

In Section \ref{sec2} we also consider test function spaces,
denoted by $\maclH _{\flat _\sigma} (\rr d)$ and $\maclH _{0,\flat _\sigma}
(\rr d)$, $\sigma >0$, where the assumptions on the Hermite coefficients above
are replaced by
$$
|c_\alpha (f)|\lesssim r^{|\alpha |}(\alpha !)^{-\frac 1{2\sigma}}.
$$
It follows that these spaces contains any $\bsycalS _{\! s}$ when $s<\frac 12$,
but are smaller and dense in $\bsySig (\rr d)$. Hence, their duals
$\maclH _{\flat _\sigma} '(\rr d)$ and $\maclH _{0,\flat _\sigma}'(\rr d)$,
respectively, contain $\bsySig '(\rr d)$.

\par

A motivations for considering these spaces is the convenient
mapping properties of the Bargmann transform on these
spaces. More precisely, in Section \ref{sec3} we deduce that any set of
entire functions
\begin{equation}\label{EntireFuncExpEst}
\begin{aligned}
\sets {F\in A(\cc d)}{|F(z)|\lesssim e^{r|z|^{\tau}} \ \text{for every}\ r >0},
\\[1ex]
\sets {F\in A(\cc d)}{|F(z)|\lesssim e^{r|z|^{\tau}} \ \text{for some}\ r >0},
\end{aligned}
\end{equation}
when $\tau >0$, as well as $A(\cc d)$, and the set of all functions which
are defined and analytic near origin in $\cc d$, are counter images of
the spaces
\begin{alignat*}{2}
&\maclH _{0,\flat _\sigma}(\rr d),&\quad &\maclH _{\flat _\sigma}(\rr d),
\quad \maclH _{0,\frac 12}(\rr d)
\\[1ex]
&\maclH _{0,\frac 12}'(\rr d),&\quad
&\maclH _{\flat _\sigma}'(\rr d)\quad
\text{and}\quad
\maclH _{0,\flat _\sigma}' (\rr d),
\end{alignat*}
for suitable choices of $\sigma$. (See Theorem \ref{flatSpacesChar} or the
tables in Section \ref{sec6}.)

\par

More precisely, beside
\begin{alignat}{5}
&\maclH _{0,s}(\rr d),& \quad &\maclH _s(\rr d),& \quad &\maclH _s'(\rr d), &
\quad
&\maclH _{0,s}'(\rr d),&\qquad s&\ge 0\ \text{or}\ s=\flat _\sigma ,
\label{Eq:HsSpaces}
\intertext{we also introduce}
&\bsycalA _{0,s}(\cc d),&\quad &\bsycalA _s(\cc d),&\quad &\bsycalA _s'(\cc d),&
\quad
&\bsycalA _{0,s}'(\cc d),&\qquad s&\ge 0\ \text{or}\ s=\flat _\sigma ,
\label{Eq:AsSpaces}
\end{alignat}
of suitable topological spaces of power series on $\cc d$, in Section \ref{sec2}.
From the definitions it follows that the Bargmann transform is homeomorphic
from the former spaces to the latter ones. Hence, the duals of
$\bsycalA _{0,s}(\cc d)$ and $\bsycalA _s(\cc d)$ can be identified with
$\bsycalA _{0,s}'(\cc d)$ and $\bsycalA _s'(\cc d)$, respectively.
In Sections \ref{sec3} and \ref{sec4} we show that the spaces 
in \eqref{Eq:AsSpaces} are convenient spaces of entire functions,
e.{\,}g. of the form \eqref{EntireFuncExpEst}.
A separate case appears when $s<\frac 12$, and in this situation the
estimates on $F$ in \eqref{EntireFuncExpEst} should be replaced by
$$
|F(z)|\lesssim e^{r(\log \eabs z ^{\frac 1{1-2s}}}
$$
(cf. \cite[Theorem 10]{FeGaTo2}).
We refer to the tables in the end of Section \ref{sec6} for a more comprehensive
explanation of these image properties.

\par

In the second part of Section \ref{sec3} we show that $\maclH _{\flat _1}'$
might be feasible when defining broad families of modulation spaces, with minimal
restrictions on the involved weight functions. In fact, it is here proved that
if the modulation spaces are defined in the background of $\maclH _{\flat _1}'$,
then these spaces are complete. In Section \ref{sec3} we
also deduce that the test function space $\maclS _C$, introduced by Gr{\"o}chenig
in \cite{Gc2}, agrees with $\maclH _{\flat _1}$.

%

\medspace

In the same manner we consider the Gelfand-Shilov spaces
$\maclS _{t_0}^{s_0}(\rr d)$ and $\Sigma _t^s(\rr d)$, and their distribution
spaces $(\maclS _{t_0}^{s_0})'(\rr d)$ and $(\Sigma _t^s)'(\rr d)$. These
spaces and the Pilipovi{\'c} spaces, are related to each others by the relations
\eqref{InclSpaces01}--\eqref{InclSpaces04}, and
\begin{multline}\label{InclSpaces}
\bsySig (\rr d) \subseteq \maclS _{t_0}^{s_0}(\rr d)
\subseteq \Sigma _t^s(\rr d) \subseteq \maclS _t^s(\rr d)
\subseteq \mascS (\rr d)
\\[1ex]
\subseteq \mascS '(\rr d) \subseteq
(\maclS _t^s)'(\rr d) \subseteq (\Sigma _t^s)'(\rr d)
\subseteq (\maclS _{t_0}^{s_0})'(\rr d)
\subseteq \bsySig '(\rr d),
\\[1ex]
\text{when}\quad
\frac 12 \le s_0<s \quad \text{and}\quad \frac 12 \le t_0<t,
\end{multline}
and
\begin{equation}\label{InclSpaces2}
\bsySig _{s_1}(\rr d) \subseteq \bsycalS _{\! s_1}(\rr d)\subseteq 
\bsySig _{s_2}(\rr d) 
\quad \text{when}  \quad s_1<s_2.
\end{equation}
%

\par

In Section \ref{sec5} we describe the images of $\Sigma _t^s$,
$\maclS _t^s$ and their duals under the Bargmann transform, for
suitable $s$ and $t$. (See also \cite{CapRodToft} for sub results.)
For example, since $\maclH _s(\rr d) =\maclS _s(\rr d)$ when
$s\ge \frac 12$, it follows from the image properties of the
Bargmann properties on Pilipovi{\'c} spaces that the mappings
\begin{equation}\label{EquivalencesBargGS}
\begin{alignedat}{4}
\mathfrak V_d \, &: &\, &\maclS _{s}(\rr d) & &\to & \,
&\bsycalA _{s}(\cc d)
\\[1ex]
\mathfrak V_d \, &: &\, &\maclS _{s}'(\rr d) & &\to & \,
&\bsycalA _{s}'(\cc d)
\end{alignedat}
\end{equation}
are bijective, which in terms of the the characterization results in Sections
\ref{sec3} and \ref{sec4} can be described as.
\begin{align}
\bsycalA _{s}(\cc d) &= \sets {F\in A(\cc d)}
{|F(z)|\lesssim e^{\frac {|z|^2}2-r|z|^{\frac 1s}},\ 
\text{for some $r>0$}}\label{EqAsIdenti}
\intertext{and}
\bsycalA _{s}'(\cc d) &= \sets {F\in A(\cc d)}
{|F(z)|\lesssim e^{\frac {|z|^2}2+r|z|^{\frac 1s}},\ 
\text{for every $r>0$}},\label{EqAsIdenti2}
\end{align}
for such choices of $s$.
In Section \ref{sec5} we, more generally, characterize all the
Gelfand-Shilov spaces of functions and distributions in \eqref{InclSpaces}
in such ways (cf. Theorem \ref{BargGSMapProp}).
%
%
%

\par

\par

Since short-time Fourier transforms with the standard Gaussian as window functions
carry over to corresponding Bargmann transforms by simple manipulations, the mapping
results for the Bargmann transform also lead to characterizations of Gelfand-Shilov
spaces, Pilipovi{\'c}
spaces and their duals in terms of estimates on the short-time Fourier transforms (see Remark
\ref{Rem:LinkSTFT} and Proposition \ref{STFTPilSpChar}). For example, let $V_\phi f$
be the short-time Fourier transform of $f$ with respect to the standard Gaussian function
$\phi$ as the window function. Then it follows from
\eqref{EqAsIdenti}, \eqref{EqAsIdenti2} and corresponding mapping properties for
$\bsySig _s(\rr d)$ and $\bsySig _s'(\rr d)$ that if $s=\frac 12$, then $f\in \maclS _s(\rr d)$
($f\in \bsySig _s(\rr d)$), if and only if 
$$
|V_\phi f(x,\xi )|\lesssim e^{-\frac {|x|^2+|\xi |^2}{4(1+r)}},
$$
for some (for every) $r>0$, and $f\in \maclS _s'(\rr d)$
($f\in \bsySig _s'(\rr d)$), if and only if
$$
|V_\phi f(x,\xi )|\lesssim e^{\frac {r(|x|^2+|\xi |^2)}{4}},
$$
for every (for some) $r>0$ (see Proposition \ref{STFTPilSpChar}).

\medspace

In Section \ref{sec6} we present some consequences and further remarks.
It is here proved that every Pilipovi{\'c} space are invariant under fractional
Fourier transforms. Here it is also shown that Pilipovi{\'c} spaces which
are \emph{not} Gelfand-Shilov spaces are neither invariant under dilations
nor algebras. Evidently, such properties are important when discussing problems
for partial differential equations, and it is expected that the absence of such
properties have bad impact on the applicability of such spaces. On the other hand,
in Section \ref{sec6} we also deduce that the Pilipovi{\'c} spaces are invariant
under actions of linear partial differential operators with polynomial coefficients.
Hence, it makes sense to discuss this class of operators in the framework of 
Pilipovi{\'c} spaces. Note that most of operators which appear in physics and
engineering belong to this class.

\par

In Section \ref{sec6} we also give examples on extension of the notion of Pilipovi{\'c}
spaces in terms of counter images of suitable spaces of entire functions under
the Bargmann transform. For example, we give a definition of spaces
$\bsySig _t^s(\rr d)$ and $\bsycalS _t^s(\rr d)$ when $s\ge 0$ or
$s=\flat _\sigma$ and $t\ge 0$ or $t=\flat _\sigma$ such that
$\bsycalS _t^s=\maclS _t^s$ when $s,t\ge \frac 12$, $\bsySig _t^s
=\Sigma _t^s$ when $s,t>\frac 12$, and $\bsySig _s^s=\bsySig _s$,
$\bsycalS _{\! s}^s = \bsycalS _{\! s}$ for general $s$.

\par

\section*{Acknowledgement}

I am very grateful to professors Stevan Pilipovi{\'c} and
Nenad Teofanov for careful reading of the paper, leading
to improvements of the content and the style. I am also grateful
to Yuanyuan Chen and Patrik Wahlberg for fruitful and
valuable discussions.

\par

\section{Preliminaries}\label{sec1}

\par

In this section we recall some basic facts. We start by discussing
Gelfand-Shilov spaces and their properties. Thereafter we recall
the definition of Pilipovi{\'c} spaces and some of their properties.
Then we recall some facts on modulation spaces. Finally
we recall the Bargmann transform and some of its
mapping properties, and introduce suitable classes of entire functions on
$\cc d$.

\par

\subsection{Gelfand-Shilov spaces}\label{subsec1.1}
We start by recalling some facts on Gelfand-Shilov spaces.
Let $0<h,s,t\in \mathbf R$ be fixed. Then $\mathcal S_{s,h}(\rr d)$
consists of all $f\in C^\infty (\rr d)$ such that
\begin{equation}\label{gfseminorm}
\nm f{\mathcal S_{t,h}^s}\equiv \sup \frac {|x^\beta \partial ^\alpha
f(x)|}{h^{|\alpha  + \beta |}\alpha !^s\, \beta !^t}
\end{equation}
is finite. Here the supremum should be taken over all $\alpha ,\beta \in
\mathbf N^d$ and $x\in \rr d$.

\par

Obviously $\mathcal S_{s,h}^t$ is a Banach space, contained in $\mascS$,
and which increases with $h$, $s$ and $t$ and 
$\mathcal S_{s,h}^t\hookrightarrow \mathscr S$. Here and
in what follows we use the notation $A\hookrightarrow B$ when the topological
spaces $A$ and $B$ satisfy $A\subseteq B$ with continuous embeddings.
Furthermore, if $s,t>\frac 12$, or $s =t=\frac 12$ and $h$
is sufficiently large, then $\maclS _{t,h}^s$ contains all finite linear
combinations of Hermite functions. Since such linear combinations
are dense in $\mathscr S$ and in $\maclS _{t,h}^s$, it follows
that the dual $(\mathcal S_{t,h}^s)'(\rr d)$ of $\mathcal S_{t,h}^s(\rr d)$ is
a Banach space which contains $\mathscr S'(\rr d)$, for such choices
of $s$ and $t$.

\par

The \emph{Gelfand-Shilov spaces} $\mathcal S_{t}^s(\rr d)$ and
$\Sigma _{t}^s(\rr d)$ are defined as the inductive and projective 
limits respectively of $\mathcal S_{t,h}^s(\rr d)$. This implies that
\begin{equation}\label{GSspacecond1}
\mathcal S_t^{s}(\rr d) = \bigcup _{h>0}\mathcal S_{t,h}^s(\rr d)
\quad \text{and}\quad \Sigma _t^{s}(\rr d) =\bigcap _{h>0}
\mathcal S_{t,h}^s(\rr d),
\end{equation}
and that the topology for $\mathcal S_t^{s}(\rr d)$ is the strongest
possible one such that the inclusion map from $\mathcal S_{t,h}^s
(\rr d)$ to $\mathcal S_t^{s}(\rr d)$ is continuous, for every choice 
of $h>0$. The space $\Sigma _t^s(\rr d)$ is a Fr{\'e}chet space
with seminorms $\nm \cdo{\mathcal S_{t,h}^s}$, $h>0$. Moreover,
$\Sigma _t^s(\rr d)\neq \{ 0\}$, if and only if $s+t\ge 1$ and
$(s,t)\neq (\frac 12,\frac 12)$, and $\maclS _t^s(\rr d)\neq \{ 0\}$, if and only
if $s+t\ge 1$.

\medspace

The \emph{Gelfand-Shilov distribution spaces} $(\mathcal S_t^{s})'(\rr d)$
and $(\Sigma _t^s)'(\rr d)$ are the projective and inductive limit
respectively of $(\mathcal S_{t,h}^s)'(\rr d)$.  This means that
\begin{equation}\tag*{(\ref{GSspacecond1})$'$}
(\mathcal S_t^s)'(\rr d) = \bigcap _{h>0}(\mathcal S_{t,h}^s)'(\rr d)\quad
\text{and}\quad (\Sigma _t^s)'(\rr d) =\bigcup _{h>0}(\mathcal S_{t,h}^s)'(\rr d).
\end{equation}
We remark that in \cite{GS} it is proved that $(\mathcal S_t^s)'(\rr d)$
is the dual of $\mathcal S_t^s(\rr d)$, and $(\Sigma _t^s)'(\rr d)$
is the dual of $\Sigma _t^s(\rr d)$ (also in topological sense). For conveniency we
set
$$
\maclS _s=\maclS _s^s,\quad \maclS _s'=(\maclS _s^s)',\quad
\Sigma _s=\Sigma _s^s
\quad \text{and}\quad
\Sigma _s'=(\Sigma _s^s)'.
$$

\par

For every admissible $s,t>0$ and $\ep >0$ we have
\begin{equation}\label{GSembeddings}
\begin{alignedat}{2}
\Sigma _t^s (\rr d) &\hookrightarrow &
\maclS _t^s(\rr d) &\hookrightarrow  \Sigma _{t+\ep}^{s+\ep}(\rr d)
\\[1ex]
\quad \text{and}\quad
(\Sigma _{t+\ep}^{s+\ep})' (\rr d) &\hookrightarrow & (\maclS _t^s)'(\rr d)
&\hookrightarrow  (\Sigma _t^s)'(\rr d).
\end{alignedat}
\end{equation}

\par

A convenient basis for Gelfand-Shilov spaces on $\rr d$ concerns the set of
Hermite functions, $\{ h_\alpha \} _{\alpha \in \nn d}$ on $\rr d$, where
the Hermite function $h_\alpha$ of order $\alpha \in \nn d$ is defined by
$$
h_\alpha (x) = \pi ^{-\frac d4}(-1)^{|\alpha |}
(2^{|\alpha |}\alpha !)^{-\frac 12}e^{\frac {|x|^2}2}
(\partial ^\alpha e^{-|x|^2}).
$$
It follows that
$$
h_{\alpha}(x)=   ( (2\pi )^{\frac d2} \alpha ! )^{-1}
e^{-\frac {|x|^2}2}p_{\alpha}(x),
$$
for some polynomial $p_\alpha$ on $\rr d$, which is
called the Hermite polynomial of order $\alpha$.

\par

The set $\{ h_\alpha \} _{\alpha \in \nn d}$ is an
orthonormal basis for $L^2(\rr d)$. It is also a
basis for the Schwartz space and its distribution space,
and for any $\Sigma _t^s$ when $s,t>\frac 12$,
$\maclS _t^s$ when $s,t\ge \frac 12$ and their distribution
spaces. They are also eigenfunctions to the Harmonic
oscillator $H\equiv |x|^2-\Delta$. More precisely, we have
$$
Hh_\alpha = (2|\alpha |+d)h_\alpha ,\qquad H\equiv |x|^2-\Delta .
$$

\par

From now on we let $\mathscr F$ be the Fourier transform which
takes the form
$$
(\mathscr Ff)(\xi )= \widehat f(\xi ) \equiv (2\pi )^{-\frac d2}\int _{\rr
{d}} f(x)e^{-i\scal  x\xi }\, dx
$$
when $f\in L^1(\rr d)$. Here $\scal \cdo \cdo$ denotes the usual
scalar product on $\rr d$. The map $\mathscr F$ extends 
uniquely to homeomorphisms on $\mathscr S'(\rr d)$,
from $(\mathcal S_t^s)'(\rr d)$ to $(\mathcal S_s^t)'(\rr d)$ and
from $(\Sigma _t^s)'(\rr d)$ to $(\Sigma _s^t)'(\rr d)$. Furthermore,
$\mascF$ restricts to
homeomorphisms on $\mathscr S(\rr d)$, from
$\mathcal S_t^s(\rr d)$ to $\mathcal S_s^t(\rr d)$ and
from $\Sigma _t^s(\rr d)$ to $\Sigma _s^t(\rr d)$,
and to a unitary operator on $L^2(\rr d)$. Similar facts hold true
when $s=t$ and the Fourier transform is replaced by a partial
Fourier transform.

\par

Gelfand-Shilov spaces and their distribution spaces can in convenient
ways be characterized by means of estimates of short-time Fourier
transforms, (see e.{\,}g. \cite{GZ,To11}).
We here recall the details and start by giving the definition of
the short-time Fourier transform.

\par

Let $\phi \in \maclS _s '(\rr d)$ be fixed. Then the \emph{short-time
Fourier transform} $V_\phi f$ of $f\in \maclS _s '
(\rr d)$ with respect to the \emph{window function} $\phi$ is
the Gelfand-Shilov distribution on $\rr {2d}$, defined by
$$
V_\phi f(x,\xi ) \equiv  (\mascF _2 (U(f\otimes \phi )))(x,\xi ) =
\mascF (f \, \overline {\phi (\cdo -x)})(\xi
),
$$
where $(UF)(x,y)=F(y,y-x)$. Here $\mascF _2F$ is the partial Fourier transform
of $F(x,y)\in \maclS _s'(\rr {2d})$ with respect to the $y$ variable.
If $f ,\phi \in \maclS _s (\rr d)$, then it follows that
$$
V_\phi f(x,\xi ) = (2\pi )^{-\frac d2}\int f(y)\overline {\phi
(y-x)}e^{-i\scal y\xi}\, dy .
$$

\par

In the sequel, $A\lesssim B$ means that $A\le cB$ for a suitable
constant $c>0$. We also set $A\asymp B$ when $A\lesssim B$
and $B\lesssim A$.

\par

\begin{prop}\label{stftGelfand2}
Let $s,t,s_0,t_0>0$ be such that $s_0+t_0\ge 1$, 
$s_0\le s$ and $t_0\le t$. Also let
$\phi \in \mathcal S_{t_0}^{s_0}(\rr d)\setminus 0$ and
$f\in (\mathcal S_{t_0}^{s_0})'(\rr d)$.
Then the following is true:
\begin{enumerate}
\item $f\in  \mathcal S_{t}^s(\rr d)$, if and only if
\begin{equation}\label{stftexpest2}
|V_\phi f(x,\xi )| \lesssim  e^{-r (|x|^{\frac 1t}+|\xi |^{\frac 1s})},
\end{equation}
holds for some $r > 0$;

\vrum

\item if in addition $(s_0,t_0)\neq (\frac 12,\frac 12)$ and $\phi \in
\Sigma _{t_0}^{s_0}(\rr d)$, then
$f\in  \Sigma _{t}^{s}(\rr d)$, if and only if \eqref{stftexpest2}
holds for every $r > 0$.
\end{enumerate}
\end{prop}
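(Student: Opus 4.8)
The plan is to reduce the characterization to the well-known behaviour of the short-time Fourier transform (STFT) under changes of window, and then to estimate directly the STFT of Hermite-type Gaussians against a fixed window. The key structural fact I would exploit is the formula
\begin{equation*}
V_\phi f(x,\xi) = (2\pi)^{-d/2}\, e^{-i\scal x\xi}\, (V_{\psi} f * \overline{V_{\psi}\phi})(x,\xi),
\end{equation*}
more precisely the standard convolution-type relation $|V_{\phi_1}f|\lesssim |V_{\phi_2}f| * |V_{\phi_2}\phi_1|$ valid whenever the windows lie in a common Gelfand-Shilov class. This lets me fix one convenient window (a Gauss function $\phi_0$, for which $V_{\phi_0}h_\alpha$ is explicitly computable and of Gaussian type) and transfer the estimate \eqref{stftexpest2} to an arbitrary admissible window $\phi\in\maclS_{t_0}^{s_0}$, at the cost of a convolution with a factor that itself obeys a bound of the form $e^{-r(|x|^{1/t_0}+|\xi|^{1/s_0})}$. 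Since $s_0\le s$ and $t_0\le t$, such a convolution preserves estimates of the type \eqref{stftexpest2} (possibly with a smaller $r$), so the \emph{class} of $f$ characterized by \eqref{stftexpest2} is independent of the choice of admissible window.

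First I would prove the ``only if'' direction of (1): if $f\in\maclS_t^s(\rr d)$, then the defining estimates \eqref{gfseminorm} on $x^\beta\partial^\alpha f$ give, via the integral defining $V_\phi f$ and integration by parts in $y$ together with multiplication by powers of $x$, the decay \eqref{stftexpest2} for some $r>0$. This is essentially the computation already recorded in \cite{GZ,To11}; the factorization of the estimate into the two exponentials $e^{-r|x|^{1/t}}$ and $e^{-r|\xi|^{1/s}}$ reflects the separate roles of $t$ (decay of $f$) and $s$ (smoothness of $f$, i.e.\ decay of $\widehat f$). Conversely, for the ``if'' direction I would use the inversion formula
\begin{equation*}
f = (2\pi)^{-d/2}\,\nmm{\phi}_{L^2}^{-2}\, \iint V_\phi f(x,\xi)\,\pi_{(x,\xi)}\phi\,dx\,d\xi,
\end{equation*}
where $\pi_{(x,\xi)}\phi(y)=e^{i\scal y\xi}\phi(y-x)$, and estimate the Gelfand-Shilov seminorms $\nm{f}{\maclS_{t,h}^s}$ by differentiating and multiplying by monomials under the integral sign, controlling the resulting integrand by the assumed decay \eqref{stftexpest2} together with the Gelfand-Shilov bounds on $\phi$ itself.

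The main obstacle will be the borderline case $s_0+t_0=1$ (in particular the critical indices $s_0=t_0=1/2$), where the windows only barely generate a nontrivial space and the Gaussian decay of the window is exactly matched to the growth one must absorb. There the convolution estimates are delicate because the rates $1/t_0$, $1/s_0$ and the admissible $r$ interact sharply, and one must verify that the loss in $r$ incurred by each window change or integration is finite and does not destroy the strict positivity of the exponential rate. For part (2), the projective (Beurling) case, the argument is the same but run uniformly: the equivalence of \eqref{stftexpest2} ``for every $r>0$'' with membership in $\Sigma_t^s(\rr d)=\bigcap_{h>0}\maclS_{t,h}^s(\rr d)$ follows by repeating the two directions above with $r$ arbitrarily large, now requiring the stronger hypothesis $\phi\in\Sigma_{t_0}^{s_0}$ so that $V_\phi\phi$ decays faster than every exponential of the stated form and the convolution does not spoil the ``for every $r$'' quantifier.
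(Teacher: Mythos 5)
The paper offers no proof of Proposition \ref{stftGelfand2} at all: it is recalled from the literature, with the proof attributed to \cite[Theorem 2.7]{GZ}. So your sketch has to be measured against that standard argument. Its two substantive steps --- the direct estimation of $V_\phi f$ from the seminorm bounds \eqref{gfseminorm} for the ``only if'' direction, and the inversion formula together with estimates of the seminorms $\nm \cdo {\maclS _{t,h}^s}$ for the ``if'' direction, both run with the given window $\phi$ itself --- are exactly that argument, and they work on the whole stated parameter range.

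The genuine gap is your declared key step: the reduction to a Gaussian reference window $\phi _0$. The hypotheses only require $s_0+t_0\ge 1$, $s_0\le s$, $t_0\le t$, so for instance $s_0=s=1/4$, $t_0=t=3/4$ is admissible, and $\maclS _{3/4}^{1/4}(\rr d)\neq \{ 0\}$. In this range the reduction fails outright. First, a Gaussian belongs to $\maclS _{t_0}^{s_0}(\rr d)$ only when $s_0,t_0\ge 1/2$ (its Fourier transform decays exactly at a Gaussian rate, never like $e^{-a|\xi |^{1/s_0}}$ with $1/s_0>2$), so $\phi _0$ is not an admissible window. Second, and fatally, the Gaussian-window characterization you want to transfer from is \emph{false} there: by \eqref{bargstft2}, $|V_{\phi _0}f(x,\xi )|=(2\pi )^{-d/2}e^{-(|x|^2+|\xi |^2)/4}\, |(\mathfrak V_df)(z)|$ with $z=2^{-1/2}(x-i\xi )$, so a bound $|V_{\phi _0}f|\lesssim e^{-r(|x|^{1/t}+|\xi |^{1/s})}$ with $1/s>2$ implies that $G(w)=(\mathfrak V_df)(w)e^{-\scal ww/2}$ satisfies $|G(u+iv)|\lesssim e^{|v|^2-r'|v|^{1/s}}$, i.e. $G$ is a bounded entire function on $\cc d$ whose bound tends to $0$ as $|v|\to \infty$; by Liouville's theorem $G\equiv 0$, hence $f=0$. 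Thus with a Gaussian window, \eqref{stftexpest2} with $s<1/2$ characterizes $\{ 0\}$ rather than $\maclS _t^s(\rr d)$, and no convolution transfer can repair this: the same Liouville argument shows that the cross term $V_{\phi _0}\phi$ can never obey the bound $e^{-r(|x|^{1/t_0}+|\xi |^{1/s_0})}$ that your transfer step assumes. Note also that your diagnosis of the hard case is inverted: $s_0=t_0=1/2$ is precisely where Gaussian techniques work (this is how the paper itself treats Gaussian windows, cf. Theorem \ref{BargMapProp} and Proposition \ref{stftGelfand3dist}); the danger zone for your plan is $\min (s,t)<1/2$. The repair is simply to drop the Gaussian reduction and run your two direct arguments with $\phi$ on the full range --- which is essentially the proof of \cite[Theorem 2.7]{GZ} that the paper invokes; your reduction is legitimate as a shortcut only under the extra assumption $s,t\ge 1/2$.
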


\par

A proof of Theorem \ref{stftGelfand2} can be found in
e.{\,}g. \cite{GZ} (cf. \cite[Theorem 2.7]{GZ}). The
corresponding result for Gelfand-Shilov distributions
is the following improvement of \cite[Theorem 2.5]{To11}.

\par

\begin{prop}\label{stftGelfand2dist}
Let $s,t,s_0,t_0>0$ be such that $s_0+t_0\ge 1$, 
$s_0\le s$ and $t_0\le t$. Also let
$\phi \in \mathcal S_{t}^{s}(\rr d)\setminus 0$ and
$f\in (\mathcal S_{t_0}^{s_0})'(\rr d)$.
Then the following is true:
\begin{enumerate}
\item $f\in  (\mathcal S_{t}^s)'(\rr d)$, if and only if
\begin{equation}\label{stftexpest2}
|V_\phi f(x,\xi )| \lesssim  e^{r(|x|^{\frac 1t}+|\xi |^{\frac 1s})},
\end{equation}
holds for every $r > 0$;

\vrum

\item if in addition $(s_0,t_0)\neq (\frac 12,\frac 12)$ and $\phi \in
\Sigma _{t_0}^{s_0}(\rr d)$, then
$f\in  (\Sigma _{t}^{s})'(\rr d)$, if and only if \eqref{stftexpest2}
holds for some $r > 0$.
\end{enumerate}
\end{prop}

\par

We note that in (2) in \cite[Theorem 2.5]{To11} it should stay
$(\Sigma _{t}^s)'(\rr d)$ instead of $\Sigma _{t}^s(\rr d)$. In Section
\ref{sec5} we show that Proposition \ref{stftGelfand2dist} (1) remains
true also in the case $s=t=\frac 12$ and $\phi$ is a Gaussian.

\par

\begin{proof}
The assertion (2) is the same as (2) in \cite[Theorem 2.5]{To11}.

\par

Let $p\in [1,\infty ]$ and let $Q^p_r(\rr d)$ be the set of all $\phi \in L^p_{loc} (\rr d)$
such that
$$
\nm \phi {Q^p_r} \equiv \nm {\phi e^{r|\cdo |^{\frac 1t}}}{L^p}
+
\nm {\widehat \phi e^{r|\cdo |^{\frac 1s}}}{L^p}
$$
is finite. By \cite{Eij,ChuChuKim} it follows that $\maclS _t^s(\rr d)$ is the inductive
limit of $Q^p_r(\rr d)$ with respect to $r>0$, also in topological sense. Hence, if
$f\in (\maclS _t^s)'(\rr d)$, then
$$
|(f,\phi )| \lesssim \nm \phi {Q^p_r},\qquad \phi \in \maclS _t^s(\rr d),
$$
for every $r>0$.

\par

Now assume that $\phi \in \maclS _t^s(\rr d)$ is fixed. Then there is an $r_0>0$ such that
$\phi \in Q^p_\ep (\rr d)$ for every $\ep \in (0,r_0]$. Hence, for such $\ep$ we have
\begin{multline*}
|V_\phi f(x,\xi )| = |(f,\phi (\cdo -x)e^{i\scal \cdo \xi})|
\\[1ex]
\nm {\phi (\cdo -x)e^{\ep |\cdo |^{\frac 1t}}}{L^p}
+
\nm {\widehat \phi (\cdo -\xi )e^{\ep |\cdo |^{\frac 1s}}}{L^p}
\\[1ex]
\lesssim e^{\ep c|x|^{\frac 1t}} +e^{\ep c|\xi |^{\frac 1s}}
\lesssim e^{\ep c(|x|^{\frac 1t} +|\xi |^{\frac 1s})},
\end{multline*}
which implies that \eqref{stftexpest2} holds for every $r>0$.

\par

On the other hand, assume that \eqref{stftexpest2} holds for every $r>0$,
and let $\psi \in \maclS _{t_0}^{s_0}(\rr d)$. By Moyal's identity
$$
(f,\psi ) = \nm \phi {L^2}^{-2} (V_\phi f,V_\phi \psi ),
$$
Proposition \ref{stftGelfand2} and \eqref{stftexpest2}, and the fact that
$\maclS _{t_0}^{s_0}$ is dense in $\maclS _{t}^{s}$, it follows that the
$(f,\psi )$ extends uniquely to any $\psi \in \maclS _{t}^{s}(\rr d)$,
and that
$$
|(f,\psi )| \lesssim \nm \psi {Q^\infty _r},
$$
for every $r>0$. Hence, $f\in (\maclS _t^s)'(\rr d)$, which gives the result.
\end{proof}

\par

\begin{rem}\label{SchwFunctionSTFT}
The short-time Fourier transform can also be used to identify
elements in $\mascS (\rr d)$ and in $\mascS '(\rr d)$. In
fact, if $\phi \in \mascS (\rr d)\setminus 0$ and $f\in (\maclS
_{\frac 12})'(\rr d)$, then the following is true:
\begin{enumerate}
\item $f\in \mascS (\rr d)$, if and only if for every $N\ge 0$,
it holds
$$
|V_\phi f(x,\xi )| \lesssim \eabs {x,\xi }^{-N} \text ;
$$

\vrum

\item $f\in \mascS '(\rr d)$, if and only if for some $N\ge 0$,
it holds
$$
|V_\phi f(x,\xi )| \lesssim \eabs {x,\xi }^{N} \text ;
$$
\end{enumerate}
(Cf. \cite[Chapter 12]{Gc2}.)
\end{rem}

\par

\subsection{The Pilipovi{\'c} spaces}\label{subsec1.2}
Next we consider spaces which are obtained by suitable
estimates of Gelfand-Shilov or Gevrey type, after the operator $x^\beta \partial ^\alpha $
in \eqref{gfseminorm} is replaced by powers of the harmonic oscillator $H=|x|^2-\Delta$.
If $s_0\ge \frac 12$ and $s>\frac 12$, then it turns out that this is merely an alternative
approach for obtaining the Gelfand-Shilov spaces $\maclS _{s_0}(\rr d)$ and
$\Sigma _s(\rr d)$, respectively. (Cf. \cite{GrPiRo1,Pil1,Pil2}.)

\par

Let $h>0$, $s\ge 0$ and let $\bsycalS _{\! h,s}(\rr d)$ be the set of all
$f\in C^\infty (\rr d)$ such that
\begin{equation}\label{GFHarmCond}
\nm f{\bsycalS _{\! h,s}}\equiv \sup _{N\ge 0}
\frac {\nm{H^Nf}{L^\infty}}{h^N(N!)^{2s}}<\infty .
\end{equation}
Then $\bsycalS _{\! h,s}(\rr d)$ is a Banach space. If $s>0$, then
$\bsycalS _{\! h,s}(\rr d)$ contains all Hermite functions. Furthermore,
if $s=0$, and $\alpha \in \nn d$ satisfies $2|\alpha |+d\le h$,
then $h_\alpha \in \bsycalS _{\! h,s}(\rr d)$.

\par

We let
$$
\bsySig _s(\rr d) \equiv \bigcap _{h>0}\bsycalS _{\! h,s}(\rr d)
\quad \text{and}\quad
\bsycalS _{\! s}(\rr d) \equiv \bigcup _{h>0}\bsycalS _{\! h,s}(\rr d),
$$
and equip these spaces by projective and inductive limit topologies,
respectively, of $\bsycalS _{\! h,s}(\rr d)$, $h>0$.

\par

In \cite{Pil1,Pil2}, Pilipovi{\'c} proved
\begin{equation}\label{Eq:PilGSidentities}
\maclS _s(\rr d)=\bsycalS _{\! s}(\rr d),\quad s\ge \frac 12,
\quad \text{and}\quad
\Sigma _s(\rr d)=\bsySig _s(\rr d), \quad s> \frac 12.
\end{equation}
On the other hand, $\Sigma _s(\rr d)$ is trivially equal to $\{ 0 \}$ when
$s\le \frac 12$, while any Hermite function $h_\alpha$ fulfills
\eqref{GFHarmCond} for every $h>0$, when $0<s\le \frac 12$.

\par

Hence,
\begin{align*}
\bsySig _s(\rr d) &= \Sigma _s(\rr d), \qquad s > \frac 12
\intertext{and}
\bsySig _s(\rr d) &\neq \Sigma _s(\rr d)=\emptyset \qquad 0<s \le \frac 12.
\end{align*}
We call $\bsySig _s(\rr d)$ the \emph{Pilipovi{\'c} space (of Beurling type) of
order $s\ge 0$ on $\rr d$}. For conveniency we set $\bsySig (\rr d)\equiv
\bsySig _{\frac 12}(\rr d)$, and call this space the \emph{Pilipovi{\'c} space on $\rr d$}.

\par

Similarly, $\bsycalS _{\! s}(\rr d)$ is called the \emph{Pilipovi{\'c} space
(of Roumieu type) of order $s\ge 0$ on $\rr d$}. It follows that
$\bsycalS _{\! s}(\rr d)=\maclS _s(\rr d)$ when $s\ge \frac 12$, but 
$\emptyset =\maclS _s(\rr d) \neq \boldsymbol {\maclS} _{\! s}(\rr d)$ when
$0\le s<\frac 12$.

\par

%

\begin{rem}\label{PilSpaceRem}
At first glance, the spaces $\bsySig _s(\rr d)$ for $s\le \frac 12$
and $\bsycalS _{\! s}(\rr d)$ for $s<\frac 12$ seem to
fit well in the family of Gelfand-Shilov spaces, and it is tempting
to put $\Sigma _s(\rr d)= \bsySig _s(\rr d)$ and $\maclS _s(\rr d)
=\bsycalS _{\! s}(\rr d)$ for such choices of $s$.
Such approach is also justified by \eqref{GSembeddings} and the
embeddings
%
$$
\bsycalS _{\! 0}(\rr d)\subseteq \bsySig _s (\rr d) \subseteq \bsycalS _{\! s}(\rr d)
\subseteq \bsySig _{s+\ep}(\rr d),\quad s,\ep >0,
$$
which follow by straight-forward computations.

\par

On the other hand, for such choices of $s$, it is for several
reasons appropriate to distinguish between $\bsySig _s (\rr d)$
and $\bsycalS _{\! s}(\rr d)$, and the trivial spaces $\Sigma _s(\rr d)$
and $\maclS _s(\rr d)$. In fact, the following holds true:
\begin{enumerate}
\item for every $s,t>0$, the spaces $\Sigma _t^s(\rr d)$ and
$\maclS _t^s(\rr d)$, and their duals, are invariant under the
dilation map $f\mapsto f(\lambda \cdo )$, for every $\lambda \in
\mathbf R \setminus 0$. On the other hand, $\bsySig _s (\rr d)$
when $s\le \frac 12$ and $\bsycalS _{\! s}(\rr d)$ when $s<\frac 12$, are
\emph{not} invariant under such dilations
in view of Corollary \ref{CorLackInvariance} in Section \ref{sec4}.

\vrum

\item Additionally to \eqref{GSembeddings} we also have
\begin{multline}\label{GSembeddings2}
\Sigma _{t_1}^{s_1}(\rr d)\subseteq \Sigma _{t_2}^{s_2}(\rr d)
\quad \text{and}\quad
\maclS _{t_1}^{s_1}(\rr d)\subseteq \maclS _{t_2}^{s_2}(\rr d)
\\[1ex]
\quad \text{when}\quad
0<s_1\le s_2,\ 0<t_1\le t_2.
\end{multline}

\par

Now let $\phi (x)=h_0(x)= \pi ^{-\frac d4}e^{-\frac {|x|^2}2}$. Then $\phi \in
\bsySig _{\frac 12} (\rr d)$. On the other hand, we have that
$f\in \Sigma _t^s(\rr d)$, if and only if
$$
|f(x)|\lesssim e^{-r |x|^{\frac 1t}}\quad \text{and}\quad |\widehat f (\xi )|
\lesssim e^{-r |\xi |^{\frac 1s}} ,
$$
for every $r>0$. (Cf. e.{\,}g. \cite{ChuChuKim, NiRo}.) This implies that
$\Sigma _t^s(\rr d)$ contains no Gauss functions when $s+t>1$, and
$s=\frac 12$ or $t=\frac 12$. In particular, $\phi \notin \Sigma _t^s(\rr d)$
for such choices of $s$ and $t$. Consequently, if we set $\Sigma _{\frac 12}(\rr d)$
equals to $\bsySig _{\frac 12} (\rr d)$, then \eqref{GSembeddings2} fails to
hold when $s_1=t_1=\frac 12$, $s_2+t_2>1$, and $s_2=s_1$ or $t_2=t_1$.
\end{enumerate}
%
%
\end{rem}

\par

The dual spaces of $\bsycalS _{\! h,s}(\rr d)$, $\bsySig _s(\rr d)$
and $\bsycalS _{\! s}(\rr d)$ are denoted by $\bsycalS _{\! h,s}'(\rr d)$,
$\bsySig _s'(\rr d)$ and $\bsycalS _{\! s}'(\rr d)$, respectively.
In Section \ref{sec4} it is proved that the supremum norm in
\eqref{GFHarmCond} can be replaced by any $L^p$ norm when
defining $\bsySig _s(\rr d)$ and $\bsycalS _{\! s}(\rr d)$, and their
topologies (cf. Proposition \ref{NormEquiv}). As a
consequence,
$$
\bsySig _s'(\rr d) = \bigcup _{h>0} \bsycalS _{\! h,s}'(\rr d)
$$
when $s>0$ and
$$
\bsycalS _{\! s}'(\rr d) = \bigcap _{h>0} \bsycalS _{\! h,s}'(\rr d)
$$
when $s\ge 0$, with inductive respective projective limit topologies
of $\bsycalS _{\! h,s}'(\rr d)$, $h>0$.

\par

Since Gauss kernels are in background of the most essential parts of our
analysis, we restrict our considerations to involve the spaces
$$
\bsySig _{s_0+\ep }(\rr d),\quad \bsycalS _{\! s_0}(\rr d),
\quad \Sigma _{t+\ep}^{s+\ep}(\rr d),\quad \maclS _t^s(\rr d), \quad \mascS (\rr d),
$$
when $s_0\ge 0$, $s,t\ge \frac 12$ and $\ep >0$, as well as corresponding distribution spaces.
For the other situations, other types of analysis are needed in view of (2)
in Remark \ref{PilSpaceRem}, and is postponed for the future.

\par

\begin{rem}
We note that in \cite{GrPiRo1,GrPiRo2} it is proved that other
elliptic operators can be used to define $\Sigma _s$ when $s>\frac 12$,
and $\maclS _s$ when $s\ge \frac 12$, instead of the harmonic
oscillator. Furthermore, in \cite{GrPiRo2} it is also proved that
the spaces which correspond to $\bsySig$ are again non-trivial,
but in general different from $\bsySig$. The latter fact is also
a consequence of Corollary \ref{CorLackInvariance} in Section
\ref{sec6}. In fact, let $f$
be smooth and let $c>0$ be such that $c\neq 1$. Then $f$ fulfills
\eqref{GFHarmCond} for every $h>0$ after $H$ is replaced by
$c|x|^2-\Delta$, if and only if $x\mapsto f(c^{1/4}x)$
belongs to $\bsySig$. On the other hand, $\bsySig$ is not
invariant under dilations in view of Corollary
\ref{CorLackInvariance}.
\end{rem}

\par

\subsection{Modulation spaces}\label{subsec1.3}

\par

We start by discussing general properties on the involved weight
functions. A \emph{weight} on a Borel set $\Omega \subseteq \rr d$
is a positive function $\omega$ such that $\omega$ and $1/\omega$
belong to $L^\infty _{loc}(\Omega)$.

\par

We recall that if $\omega$ and $v$
are weights on $\rr d$, then $\omega$ is called
\emph{$v$-moderate}, or \emph{moderate}, if
\begin{equation}\label{moderate}
\omega (x+y)\lesssim \omega (x)v(y),\qquad x,y\in \rr d.
\end{equation}
(Cf. \cite{Gc2}.)
The set of moderate weights on $\rr d$ is denoted by $\mascP _E(\rr d)$. If
$v$ here above can be chosen as a polynomial, then $\omega$ is called
\emph{polynomially moderate}. The set of polynomially moderate weights
on $\rr d$ is denoted by $\mascP (\rr d)$.

\par

We note that $v$ above put limits of the size on $\omega$, since \eqref{moderate}
implies
$$
v(-x)^{-1}\lesssim \omega (x)\lesssim v(x).
$$

\par

For the larger families of weights in \cite{To11}, the condition \eqref{moderate}
is relaxed into
\begin{equation}\label{modrelax}
\omega (x)\lesssim \omega (x+y)\lesssim \omega (x) \quad
\text{when}\quad Rc \le |x|\le \frac c{|y|} ,\quad R\ge 2,
\end{equation}
or
\begin{multline}\tag*{(\ref{modrelax})$'$}
\omega (x)^2\lesssim \omega (x+y)\omega (x-y)\lesssim \omega (x)^2
\\[1ex]
\text{when}\quad
Rc \le |x|\le \frac c{|y|} ,\quad R\ge 2,
\end{multline}
Evidently, if \eqref{moderate} holds then \eqref{modrelax}
is true, and if \eqref{modrelax} holds then \eqref{modrelax}$'$ is true.

\par

We recall that is \eqref{modrelax}  fulfilled for the weights 
$$
\omega _1(x) = \eabs x^s\quad \text{and}\quad
\omega _2(x) = e^{r|x|^{\frac 1t}},
$$
when $r,s\in \mathbf R$ and $t\ge \frac 12$. On the other hand,
$\omega _2\notin \mascP _E(\rr d)$ when $t<1$.
We also recall that
\begin{equation}\label{Gaussest}
C^{-1}e^{-c|x|^2}\le \omega (x)\le Ce^{c|x|^2},
\end{equation}
holds for some positive constants $c$ and $C$ when $\omega$
satisfies \eqref{modrelax} or \eqref{modrelax}$'$, in view of
\cite[Proposition 2.13]{To11}.

\par

\begin{defn}\label{defweights}
Let $\omega$ be a weight on $\rr d$. 
\begin{enumerate}
\item $\mascP  _{Q}(\rr d)$ is the set of all
weights $\omega$ on $\rr d$ such that \eqref{modrelax}$'$
holds for some positive constants $c$ and $C$;

\vrum

\item $\mascP  _{Q}^0(\rr d)$ is the set of all
weights $\omega \in \mascP  _{Q}(\rr d)$ such that for every
$c>0$, there is a constant $C>0$ such that \eqref{Gaussest}
holds.

\vrum

\item The set $\Omega \subseteq \mascP _Q(\rr d)$
is called an \emph{admissible family of weights}, if
there is a rotation invariant function $0<\omega _0(x)\in
L^\infty _{loc}(\rr d)\cap L^1(\rr d)$ which
decreases with $|x|$ and such that
$$
\omega \cdot \omega _0\in \Omega \quad \text{and}\quad
\omega / \omega _0\in \Omega \quad \text{when}\ \omega \in \Omega .
$$
\end{enumerate}
\end{defn}

\par

\begin{example}\label{exadmweights}
Every family in Definition \ref{defweights} are admissible. Moreover,
if $\omega _0\in \mascP _{Q}(\rr d)$ and $\Omega$ is a family of
admissible weights, then
\begin{enumerate}
\item $\sets {\eabs \cdo ^N}{N\in \mathbf Z}$ is admissible;

\vrum

\item $\omega _0\cdot \Omega \equiv \sets {\omega _0\omega}
{\omega \in \Omega}$ is admissible.
\end{enumerate}
\end{example}

\par

In what follows 
we let $\mascB $ be a \emph{mixed quasi-norm space} on $\rr d$. This means that for some 
$p_1,\dots ,p_n\in (0,\infty ]$ and vector spaces
\begin{equation}\label{Vdirsum}
V_1,\dots ,V_n\subseteq \rr d\quad \text{such that}\quad
V_1\oplus \cdots \oplus V_n =\rr d,
\end{equation}
then $\mascB =\mascB _n$, where $\mascB _j$, $j=1,\dots ,n$ is inductively defined 
by
\begin{equation}\label{mixnormspacenorm}
\mascB _j =
\begin{cases}
L^{p_1}(V_1), & \ j=1
\\[1ex]
L^{p_j}(V_j; \mascB _{j-1}), &\ j=2,\dots , n.
\end{cases}
\end{equation}
The minimal exponent $\min (p_1,\dots ,p_n)$ is 
denoted by $\nu _1(\mascB )$, and the quasi-norm of 
$\mascB $ is given by $\nm f{\mascB } \equiv
\nm {F_{n-1}}{L^{p_n}(V_n)}$, where $F_0=f$ and
$$
F_j(x_{j+1},\dots ,x_n) =
\nm {F_{j-1}(\cdo ,x_{j+1},\dots ,x_n)}{L^{p_j}(V_j)},\quad j=1,\dots , n-1.
$$

\par

Let $p,q\in (0,\infty ]$, and $L^{p,q}(\rr {2d})$ 
be the quasi-Banach spaces, which consist of all measurable $F$ on $\rr {2d}$ such that
$$
\nm F{L^{p,q}} \equiv \Big (\intrd \Big (\intrd |F(x,\xi
 )|^p\, dx\Big )^{q/p}\, d\xi \Big )^{1/q}<\infty \, .
$$
(with obvious modifications when $p=\infty$ or $q=\infty$).
It follows that $L^{p,q}(\rr {2d})$ 
is a mixed quasi-norm spaces.

\par

The definition of modulation spaces is given in the following.

\par

\begin{defn}\label{bfspaces2}
Let $\mascB $ be a mixed quasi-norm space on
$\rr {2d}$, $\omega \in \mascP _{Q}(\rr {2d})$, and let
\begin{equation}\label{phidef}
\phi (x)=\pi ^{-\frac d4}e^{-\frac {|x|^2}2}.
\end{equation}
Then the
\emph{modulation space}
$M(\omega ,\mascB )$ consists of all $f\in
\bsySig '(\rr d)$ such that
\begin{equation}\label{modnorm2}
\nm f{M(\omega ,\mascB )}
\equiv \nm {V_\phi f\, \omega }{\mascB }<\infty .
\end{equation}
\end{defn}

\par

We also set  $M^{p,q}_{(\omega )}(\rr d) = M(\omega ,L^{p,q}(\rr {2d}))$, and
$M^{p}_{(\omega )}=M^{p,p}_{(\omega )}$. Furthermore, if in addition $\omega =1$,
then we set $M^{p,q}=M^{p,q}_{(\omega )}$ and $M^p=M^{p}_{(\omega )}$.

\par

In \cite{To11}, it is assumed that $f$ in Definition \ref{bfspaces2} should belong
to $\maclS _{\frac 12}'(\rr d)$ instead of the larger class $\bsySig '(\rr d)$. This implies
that the modulation spaces might be larger in Definition \ref{bfspaces2} compared
to \cite{To11}. On the other hand, if $\omega$ belongs to 
$\mascP _Q^0(\rr {2d})$ and $f\in M(\omega ,\mascB )$, then $f\in
\maclS _{\frac 12}'(\rr d)$ by Theorem \ref{BargGSMapProp} in Section \ref{sec5}.

\par

If $\omega$ here above is a moderate weight, then it can be proved
that the condition \eqref{modnorm2} is independent of the choice of
$\phi \in \Sigma _1(\rr d)\setminus 0$, and that different choices of $\phi$
gives rise to equivalent norms (see e.{\,}g. Proposition 1.5 in \cite{Toft12}).
On the other hand, for general weights in
$\mascP _Q(\rr {2d})$, such invariance property seems to not yield.

\par

\subsection{Spaces of entire functions and
the Bargmann transform}\label{subsec1.4}
We shall now consider the Bargmann transform which is defined by the
formula
\begin{equation*}
(\mathfrak V_df)(z) =\pi ^{-\frac d4}\int _{\rr d}\exp \Big ( -\frac 12(\scal
z z+|y|^2)+2^{\frac 12}\scal zy \Big )f(y)\, dy,
\end{equation*}
when $f\in L^2(\rr d)$ (cf. \cite{B1}). We note that if $f\in
L^2(\rr d)$, then the Bargmann transform
$\mathfrak V_df$ of $f$ is the entire function on $\cc d$, given by
$$
(\mathfrak V_df)(z) =\int _{\rr d}\mathfrak A_d(z,y)f(y)\, dy,
$$
or
\begin{equation}\label{bargdistrform}
(\mathfrak V_df)(z) =\scal f{\mathfrak A_d(z,\cdo )},
\end{equation}
where the Bargmann kernel $\mathfrak A_d$ is given by
$$
\mathfrak A_d(z,y)=\pi ^{-\frac d4} \exp \Big ( -\frac 12(\scal
zz+|y|^2)+2^{\frac 12}\scal zy\Big ).
$$
Here
\begin{gather*}
\scal zw = \sum _{j=1}^dz_jw_j,\quad \text{when} \quad
z=(z_1,\dots ,z_d) \in \cc d
\\[1ex]
\text{and} \quad w=(w_1,\dots ,w_d)\in \cc d,
\end{gather*}
and otherwise $\scal \cdo \cdo $ denotes the duality between test
function spaces and their corresponding duals.
We note that the right-hand side in \eqref{bargdistrform} makes sense
when $f\in \maclS _{\frac 12}'(\rr d)$ and defines an element in $A(\cc d)$,
since $y\mapsto \mathfrak A_d(z,y)$ can be interpreted as an element
in $\maclS _{\frac 12} (\rr d)$ with values in $A(\cc d)$. Here and in what follows,
$A(\Omega )$ denotes the set of analytic functions on the open set
$\Omega \subseteq \cc d$.

\par

It was proved in \cite{B1} that $f\mapsto \mathfrak V_df$ is a bijective
and isometric map  from $L^2(\rr d)$ to the Hilbert space $A^2(\cc d)
\equiv B^2(\cc d)\cap A(\cc d)$, where $B^2(\cc d)$ consists of all
measurable functions $F$ on $\cc  d$ such that
\begin{equation}\label{A2norm}
\nm F{B^2}\equiv \Big ( \int _{\cc d}|F(z)|^2d\mu (z)  \Big )^{\frac 12}<\infty .
\end{equation}
Here $d\mu (z)=\pi ^{-d} e^{-|z|^2}\, d\lambda (z)$, where $d\lambda (z)$ is the
Lebesgue measure on $\cc d$. We recall that $A^2(\cc d)$ and $B^2(\cc d)$
are Hilbert spaces, where the scalar product are given by
\begin{equation}\label{A2scalar}
(F,G)_{B^2}\equiv  \int _{\cc d} F(z)\overline {G(z)}\, d\mu (z),
\quad F,G\in B^2(\cc d).
\end{equation}
If $F,G\in A^2(\cc d)$, then we set $\nm F{A^2}=\nm F{B^2}$
and $(F,G)_{A^2}=(F,G)_{B^2}$.

\par

Furthermore, Bargmann proved that there is a convenient reproducing. formula on
$A^2(\cc d)$. More precisely, let
\begin{equation}\label{reproducing}
(\Pi _AF)(z) \equiv \int _{\cc d}F(w)e^{(z,w)}\, d\mu (w),
\end{equation}
when $Fe^{R|\cdo |-|\cdo |^2}\in L^1(\cc d)$, for every $R\ge 0$. Here
\begin{gather*}
(z,w) = \sum _{j=1}^dz_j\overline{w_j},\quad \text{when} \quad
z=(z_1,\dots ,z_d) \in \cc d
\\[1ex]
\text{and} \quad w=(w_1,\dots ,w_d)\in \cc d,
\end{gather*}
is the scalar product of $z\in \cc d$ and $w\in \cc d$.
Then it is proved in \cite{B1,B2} that $\Pi _AF =F$ when
$F\in A^2(\cc d)$.

\medspace

In \cite{B1} it is also proved that
\begin{equation}\label{BargmannHermite}
\mathfrak V_dh_\alpha  = e_\alpha ,\quad \text{where}\quad
e_\alpha (z)\equiv \frac {z^\alpha}{\sqrt {\alpha !}},\quad z\in \cc d .
\end{equation}
In particular, the Bargmann transform maps the orthonormal basis
$\{ h_\alpha \}_{\alpha \in \nn d}$ in $L^2(\rr d)$ bijectively into the
orthonormal basis $\{ e_\alpha \}_{\alpha \in \nn d}$ of monomials
in $A^2(\cc d)$. Furthermore, if $f,g\in L^2(\rr d)$ and $F,G\in A^2(\cc d)$
are given by
\begin{equation}\label{FourSeries}
\begin{alignedat}{2}
f&=\sum _{\alpha \in \nn d} a_\alpha h_\alpha
,&\quad
g&=\sum _{\alpha \in \nn d} b_\alpha h_\alpha
\\[1ex]
F&=\sum _{\alpha \in \nn d} a_\alpha e_\alpha
,&\quad
G&=\sum _{\alpha \in \nn d} b_\alpha e_\alpha
\end{alignedat}
\end{equation}
then $F=\mathfrak V_df$, $G=\mathfrak V_dg$ and
\begin{equation}\label{Scalarproducts}
(f,g)_{L^2}=(F,G)_{A^2}=\sum _{\alpha \in \nn d}a_\alpha \overline {b_\alpha}
\end{equation}
Here and in what follows, $(\cdo ,\cdo )_{L^2(\rr d)}$ and
$(\cdo ,\cdo )_{A^2(\cc d)}$ denote the scalar products in
$L^2(\rr d)$ and $A^2(\cc d)$, respectively.

\medspace

Next we recall the link between the Bargmann transform
and the short-time Fourier transform
$f\mapsto V_\phi f$, when $\phi$ is given by \eqref{phidef}.
Let $S$ be the dilation operator given by
\begin{equation}\label{Sdef}
(SF)(x,\xi ) = F(2^{-\frac 12}x,-2^{-\frac 12}\xi ),
\end{equation}
when $F\in L^1_{loc}(\rr {2d})$. Then it
follows by straight-forward computations that
\begin{multline}\label{bargstft1}
(\mathfrak{V} _d f)(z)  =  (\mathfrak{V} _df)(x+\im \xi ) 
=  (2\pi )^{\frac d2}e^{\frac 12(|x|^2+|\xi|^2)}e^{-i\scal x\xi}V_\phi f(2^{\frac 12}x,-2^{\frac 12}\xi )
\\[1ex]
=(2\pi )^{\frac d2}e^{\frac 12(|x|^2+|\xi|^2)}e^{-i\scal x\xi}(S^{-1}(V_\phi f))(x,\xi ),
\end{multline}
or equivalently,
\begin{multline}\label{bargstft2}
V_\phi f(x,\xi )  =  
(2\pi )^{-\frac d2} e^{-\frac 14(|x|^2+|\xi |^2)}e^{-\im \scal x \xi /2}(\mathfrak{V} _df)
(2^{-\frac 12}x,-2^{-\frac 12}\xi).
\\[1ex]
=(2\pi )^{-\frac d2}e^{-i\scal x\xi /2}S(e^{-\frac {|\cdo |^2}2}(\mathfrak{V} _df))(x,\xi ).
\end{multline}
We observe that \eqref{bargstft1} and \eqref{bargstft2}
can be formulated into
\begin{equation*}
\mathfrak V_d = U_{\mathfrak V}\circ V_\phi ,\quad \text{and}\quad
U_{\mathfrak V}^{-1} \circ \mathfrak V_d =  V_\phi ,
\end{equation*}
where $U_{\mathfrak V}$ is the linear, continuous and bijective operator on
$\mathscr D'(\rr {2d})\simeq \mathscr D'(\cc d)$, given by
\begin{equation}\label{UVdef}
(U_{\mathfrak V}F)(x,\xi ) = (2\pi )^{\frac d2} e^{\frac 12(|x|^2+|\xi |^2)}e^{-i\scal x\xi}
F(2^{\frac 12}x,-2^{\frac 12}\xi ) .
\end{equation}

\par

\begin{defn}\label{thespaces}
Let $\omega$ be a weight on $\rr {2d}$, $\mascB $ be a mixed quasi-norm
space on $\rr {2d}\simeq \cc d$, and let $r>0$ be such that $r\le \nu _1(\mascB )$.
\begin{enumerate}
\item The space $B(\omega ,\mascB )$ is the modified
weighted
$\mascB $-space which consists of all $F\in L^r_{loc}(\rr {2d})=
L^r_{loc}(\cc {d})$ such that
$$
\nm F{B(\omega ,\mascB )}\equiv \nm {(U_{\mathfrak V}^{-1}F)\omega }{\mascB }<\infty .
$$
Here $U_{\mathfrak V}$ is given by \eqref{UVdef};

\vrum

\item The space $A(\omega ,\mascB )$ consists of all $F\in A(\cc
d)\cap B(\omega ,\mascB )$ with topology inherited
from $B(\omega ,\mascB )$.
\end{enumerate}
\end{defn}

\par

We note that the spaces in Definition \ref{thespaces} are normed
spaces when $\nu _1(\mascB )\ge 1$.

\par

For convenience we set $\nm F{B(\omega ,\mascB )}=\infty$, when
$F\notin B(\omega ,\mascB )$ is measurable, and
$\nm F{A(\omega ,\mascB )}=\infty$, when $F\in A(\cc d)\setminus B(\omega ,\mascB )$.
We also set
$$
A^{p,q}_{(\omega )}(\cc d) = A(\omega ,L^{p,q}(\cc d) )
\quad \text{and}\quad
A^{p}_{(\omega )} = A^{p,p}_{(\omega )}.
$$

\par

The following result follows from Theorems 3.4 and 5.1 in
\cite{To11}, and justify the definition of the spaces in Definition
\ref{thespaces}.
\par

\begin{prop}\label{mainstep1prop}
Let $\omega \in \mascP  _{Q}^0(\rr {2d})$, $\mascB $ be a mixed
quasi-norm space on $\rr {2d}$ and let $\phi$ be as in
\eqref{phidef}. Then the following is true:
\begin{enumerate}
\item the map $\mathfrak V _d$ is an isometric bijection from
$M(\omega ,\mascB )$ to $A(\omega ,\mascB )$;

\vrum

\item the map $\Pi _A$ is a continuos projection from
$B(\omega ,\mascB )$ to $A(\omega ,\mascB )$.
\end{enumerate}
\end{prop}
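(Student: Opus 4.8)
The plan is to derive both parts from the factorization $\mathfrak V_d=U_{\mathfrak V}\circ V_\phi$ recorded in \eqref{bargstft1}, together with the reproducing property of $\Pi _A$, thereby reducing the analytic content to Theorems 3.4 and 5.1 in \cite{To11}; the only genuinely new point is that the ambient class in Definition \ref{bfspaces2} has been enlarged from $\maclS _{1/2}'(\rr d)$ to $\bsySig '(\rr d)$.

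For part (1) the isometry is almost built into the definitions. First I would check that every $f\in M(\omega ,\mascB )$ in fact lies in $\maclS _{1/2}'(\rr d)$; since $\omega \in \mascP _Q^0(\rr {2d})$, this is exactly Theorem \ref{BargMapProp}, so the classical Bargmann transform $\mathfrak V_df=\scal f{\mathfrak A_d(z,\cdo )}$ is well defined and entire. By \eqref{bargstft1} one has $U_{\mathfrak V}^{-1}\mathfrak V_df=V_\phi f$, whence Definition \ref{thespaces} and \eqref{modnorm2} give
$$
\nm {\mathfrak V_df}{B(\omega ,\mascB )}=\nm {(U_{\mathfrak V}^{-1}\mathfrak V_df)\omega }{\mascB }=\nm {(V_\phi f)\omega }{\mascB }=\nm f{M(\omega ,\mascB )}.
$$
As $\mathfrak V_df\in A(\cc d)$, this shows $\mathfrak V_d$ maps $M(\omega ,\mascB )$ isometrically into $A(\omega ,\mascB )$. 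For surjectivity I would start from $F\in A(\omega ,\mascB )$, write its Taylor expansion as $F=\sum _\alpha c_\alpha e_\alpha $, and set $f=\sum _\alpha c_\alpha h_\alpha $; by \eqref{BargmannHermite} this is the only candidate with $\mathfrak V_df=F$, and the finiteness of $\nm F{A(\omega ,\mascB )}$ should, via \cite{To11}, guarantee that the series defines an element of $\bsySig '(\rr d)$ with $\nm f{M(\omega ,\mascB )}=\nm F{A(\omega ,\mascB )}$ by the displayed identity.

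For part (2), $\Pi _AF$ is analytic whenever the integral \eqref{reproducing} converges, and the reproducing identity $\Pi _AF=F$ on $A^2(\cc d)$ extends to all of $A(\omega ,\mascB )$; thus $\Pi _A$ restricts to the identity there and is a projection once boundedness from $B(\omega ,\mascB )$ onto $A(\omega ,\mascB )$ is established. To prove boundedness I would transport $\Pi _A$ through $U_{\mathfrak V}$ to phase space, where the conjugated operator becomes a twisted convolution of $U_{\mathfrak V}^{-1}F$ against the fixed Gaussian $V_\phi \phi $, namely the orthogonal projection onto the range of $V_\phi $. The required bound $\nm {\Pi _AF}{A(\omega ,\mascB )}\lesssim \nm F{B(\omega ,\mascB )}$ then reduces to a convolution estimate on $\mascB $ against this Gaussian, with the weight $\omega $ carried across the convolution.

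The main obstacle is precisely this convolution estimate. When $\nu _1(\mascB )\ge 1$ and $\omega $ is moderate one could simply invoke Young's inequality, but here $\mascB $ is allowed to be quasi-Banach, so $\nu _1(\mascB )<1$ is permitted, and $\omega $ need only satisfy the relaxed condition \eqref{modrelax}$'$ rather than genuine moderateness. The way around this is to use that the kernel $V_\phi \phi $ has Gaussian decay, which dominates the at-most-Gaussian growth of $\omega $ allowed by \eqref{Gaussest}: splitting phase space into the annular regions controlled by \eqref{modrelax}$'$ and applying the quasi-Banach form of Young's inequality on each piece lets one absorb $\omega $ and sum the contributions. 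This quantitative step is the technical core of Theorems 3.4 and 5.1 in \cite{To11}, which I would cite for the estimate itself.
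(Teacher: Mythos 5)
Your proposal is correct and takes essentially the same approach as the paper: the paper gives no independent argument but simply derives the proposition from Theorems 3.4 and 5.1 in \cite{To11}, and your proof --- the definitional isometry coming from $\mathfrak V_d=U_{\mathfrak V}\circ V_\phi$ together with citing those same theorems for the surjectivity and for the boundedness of $\Pi _A$ --- is precisely the natural unwinding of that citation. Your handling of the enlarged ambient space $\bsySig '(\rr d)$ via the forward reference to Theorem \ref{BargMapProp} also matches the paper's own remark following Definition \ref{bfspaces2}.
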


\par

The case $\omega =1$ and $\mascB =L^2$ was proved already in
\cite {B1}. In Section \ref{sec3} we extend Proposition
\ref{mainstep1prop}{\,}(1) in such way that the only
assumption $\omega \in \mascP  _{Q}^0(\rr {2d})$ is 
that it should be a weight on $\rr {2d}$.

\par

Let $D_r(z_0)$ be the polydisc
$$
\sets {z=(z_1,\dots ,z_d)\in \cc d}{|z_j-z_{0,j}|<r_j,\ j=1,\dots ,d},
$$
with respect to
$$
z_0 =(z_{0,1},\dots ,z_{0,d}) \in \cc d,
\quad \text{and}\quad
r=(r_1,\dots ,r_d)\in \mathbf [0,\infty )^d,
$$
and let $A_d \{ z _0\}$ be the set of all functions which are analytic at $z_0$.
Then

$$
A(\cc d) = \bigcap _{r\in [0,\infty )^d} A(D_r(z)),\qquad
A_d \{ z_0 \}  =\bigcup _{r\in [0,\infty )^d} A(D_r(z_0)),
$$

\par

The following result is a straight-forward consequence of Theorem 3.2 in \cite{To11}.

\par

\begin{prop}\label{AnSpacesEmb}
Let $p_1,p_2\in (0,\infty ]$, $s,t\ge \frac 12$, $r,r_1,r_2\in \mathbf R$
be such that $r_2<r_1$,
\begin{equation}\label{varthetarstDef}
\vartheta _{r,s,t}(z) = e^{r(|x|^{\frac 1t}+|\xi |^{\frac 1s})}, \quad
z=x+i\xi ,\ x,\xi \in \rr d,
\end{equation}
and let $\sigma _r(z)=\eabs z^r$.
Then
\begin{gather*}
A^{p_1}_{(\vartheta  _{r_1,s,t})}(\cc d)
\subseteq
A^{p_2}_{(\vartheta  _{r_2,s,t})}(\cc d),
\\[1ex]
A^{p_1}_{(\vartheta _{r,s,t}\sigma _N)}(\cc d)
\subseteq
A^{p_2}_{(\vartheta _{r,s,t})}(\cc d)
\subseteq
A^{p_1}_{(\vartheta _{r,s,t}\sigma _{-N})}(\cc d),
\\[1ex]
\nm F{A^{p_2}_{(\vartheta _{r_2,s,t})}}\lesssim \nm
F{A^{p_1}_{(\vartheta _{r_1,s,t})}}
\intertext{and}
\nm F{A^{p_1}_{(\vartheta _{r_2,s,t}\sigma _{-N})}}\lesssim \nm
F{A^{p_2}_{(\vartheta _{r_1,s,t})}}
\lesssim \nm F{A^{p_1}_{(\vartheta _{r_2,s,t}\sigma _{N})}},\qquad F\in A(\cc d),
\end{gather*}
provided $N\ge 0$ is chosen large enough.
\end{prop}

\par

\begin{proof}
Let $\Omega $ be the set of all weights $z\mapsto
\omega _{r_2,s,t}(z)\eabs z^h$, $h\in \mathbf R$.
Then $\Omega $ is an admissible family of weights.
By Theorem 3.2 in \cite{To11} we get
$$
\nm F{A^{p_2}_{(\omega _{s,t,r_2})}}\lesssim \nm
F{A^{p_1}_{(\omega _{s,t,r_2}\sigma _r)}},\qquad F\in A(\cc d),
$$
provided $r\ge 0$ is chosen large enough. Since
$\omega _{s,t,r_2}\sigma _N\lesssim \omega _{s,t,r_1}$, we get
$$
\nm F{A^{p_1}_{(\omega _{s,t,r_2}\sigma _r)}}\lesssim \nm F{A^{p_1}
_{(\omega _{s,t,r_1})}},\qquad F\in A(\cc d),
$$
and the first inclusion and the first inequality in the assertion
follow from these estimates. The other parts follow by similar arguments
and are left for the reader.
\end{proof}

\par

\begin{rem}\label{RemAnalSpaceTop}
Later on we consider consider spaces of entire functions of the forms
\begin{align}
\textstyle{\underset{r>0}\bigcup A^\infty _{(\vartheta _r)}(\cc d)}
&= \sets {F\in A(\cc d)}{|F(z)|\lesssim \omega _r(z)\ \text{for some}\ r>0}
\label{Eq:GenIndAnFuncSet}
\intertext{and}
\textstyle{\underset{r>0}\bigcap A^\infty _{(\vartheta _r)}(\cc d)}
&= \sets {F\in A(\cc d)}{|F(z)|\lesssim \omega _r(z)\ \text{for every}\ r>0}
\label{Eq:GenProjAnFuncSet}
\end{align}
for a suitable family $\{ \omega _r\} _{r>0}$ of weights on $\cc d$. Here
$\vartheta _r(z)=e^{-\frac {|z|^2}2}\omega _r(z)$. 
We let the topologies of the sets in \eqref{Eq:GenIndAnFuncSet}
and \eqref{Eq:GenProjAnFuncSet}
be the inductive limit and projective limit topologies of
$A^\infty _{(\vartheta _r)}(\cc d)$, respectively.
\end{rem}

\par

\section{Hermite and power series expansions, and the Bargmann
transform}\label{sec2}

\par

In this section we consider topological vector spaces of
Hermite series expansions and link them to topological vector spaces of
power series expansions through the Bargmann transform.
In the case when a Hilbert space of power series expansions
is equal to $A^2_{(\omega )}(\cc d)$ for suitable $\omega$, then
we deduce explicit formulas between involved weights for the set of
Hermite series expansions and the weight $\omega$.

\par

%
%
%

\subsection{Topological vector spaces of Hermite series and
power series expansions}\label{subsec2.1}

\par

The spaces of series expansions depend on parameters
in the extended sets
$$
{\textstyle{\mathbf R_\flat = \mathbf R_+ \bigcup \sets { \flat _\sigma}{\sigma >0} }}
\quad \text{and}\quad
{\textstyle{\overline {\mathbf R_\flat} = \mathbf R_\flat \bigcup \{ 0 \} }},
$$
of positive real numbers. For conveniency we also set $\flat _\infty =\frac 12$.
Beside the usual ordering in $\mathbf R$, the elements $\flat _\sigma$
in $\mathbf R_\flat$ and $\overline {\mathbf R_\flat}$ are ordered by
the relations $x_1<\flat _{\sigma _1}<\flat _{\sigma _2}<x_2$, when
$\sigma _1<\sigma _2$, and $x_1<\frac 12$ and $x_2\ge \frac 12$ are real.

\par


%
%
%
%
%
%
%

\par

\begin{defn}\label{DefSeqSpaces}
Let $p\in (0,\infty ]$, $r,t\in \mathbf R_+$, $s\in \mathbf R_\flat$, $\vartheta$
be a weight on $\nn d$, and let
\begin{equation}\label{varthetarsDef}
\vartheta _{r,s}(\alpha )\equiv
\begin{cases}
e^{r|\alpha |^{\frac 1{2s}}}, & \text{when}\quad s\in \mathbf R_+,
\\[1ex]
r^{|\alpha |}(\alpha !)^{\frac 1{2\sigma }}, & \text{when}\quad s = \flat _\sigma ,
\quad \qquad \alpha \in \nn d.
\end{cases}
\end{equation}
Then
\begin{enumerate}
\item $\ell _0' (\nn d)$ is the set of all sequences $\{c_\alpha \} _{\alpha \in \nn d}
\subseteq \mathbf C$ on $\nn d$;

\vrum

\item $\ell _{0,0}(\nn d)\equiv \{ 0\}$, and $\ell _0(\nn d)$ is the set of all sequences
$\{c_\alpha \} _{\alpha \in \nn d}\subseteq \mathbf C$ such that $c_\alpha \neq 0$
for at most finite numbers of $\alpha$;

\vrum

\item $\ell ^p_{[\vartheta ]}(\nn d)$ is the quasi-Banach space which consists of
all sequences $\{ c_\alpha \} _{\alpha \in \nn d} \subseteq \mathbf C$
such that
$$
\nm {\{ c_\alpha \} _{\alpha \in \nn d} }{\ell ^p_{[\vartheta ]}}\equiv
\nm {\{ c_\alpha \vartheta (\alpha )\} _{\alpha \in \nn d} }{\ell ^p}
$$
is finite;

\vrum

\item $\ell _{0,s}(\nn d)\equiv \underset {r>0}\bigcap \ell ^p_{[\vartheta _{r,s}]}(\nn d)$
and $\ell _s(\nn d)\equiv \underset {r>0}\bigcup \ell ^p_{[\vartheta _{r,s}]}(\nn d)$, with
projective respective inductive limit topologies of $\ell ^p_{[\vartheta _{r,s}]}(\nn d)$
with respect to $r>0$;

\vrum

\item $\ell _{0,s}'(\nn d)\equiv \underset {r>0}\bigcup \ell ^p_{[1/\vartheta _{r,s}]}(\nn d)$
and $\ell _s'(\nn d)\equiv \underset {r>0}\bigcap \ell ^p_{[1/\vartheta _{r,s}]}(\nn d)$, with
inductive respective projective limit topologies of $\ell ^p_{[1/\vartheta _{r,s}]}(\nn d)$
with respect to $r>0$.
%
%
%
%
\end{enumerate}
\end{defn}

\par

%

Let $p\in (0,\infty ]$, and let $\Omega _N$ be the set of all
$\alpha \in \nn d$ such that $|\alpha |\le N$. Then the
topology of $\ell _0(\nn d)$ is defined by the inductive
limit topology of the sets
$$
\Sets {\{ c_\alpha \} _{\alpha \in \nn d} \in \ell _0'(\nn d)}{c_\alpha =0\
\text{when}\ \alpha \neq \Omega _N}
$$
with respect to $N\ge 0$, and whose topology
is given through the quasi-norms
\begin{equation}\label{SemiNormsEllSpaces}
\{ c_\alpha \} _{\alpha \in \nn d}\mapsto \nm {\{ c_\alpha \}
_{|\alpha |\le N} }{\ell ^p(\Omega _N)},
\end{equation}
Since any two such quasi-norms on a finite-dimensional vector space are equivalent,
it follows that these topologies are independent of $p$.
Furthermore, $\ell _0' (\nn d)$ is a Fr{\'e}chet space and independent of $p$
when the topology is defined by the quasi-semi-norms
\eqref{SemiNormsEllSpaces}.

\par

Next we introduce spaces of formal Hermite series expansions
\begin{equation}\label{Hermiteseries}
f=\sum _{\alpha \in \nn d}c_\alpha h_\alpha ,\quad \{c_\alpha \}
_{\alpha \in \nn d}\in \ell _0' (\nn d),
\end{equation}
and spaces of formal power series expansions
\begin{equation}\label{Powerseries}
F=\sum _{\alpha \in \nn d}c_\alpha e_\alpha , \quad \{c_\alpha \}
_{\alpha \in \nn d}\in \ell _0' (\nn d),
\end{equation}
which correspond to
\begin{equation}\label{ellSpaces}
\ell _{0,s}(\nn d),\quad \ell _s(\nn d),\quad \ell ^p_{[\vartheta ]}(\nn d),
\quad \ell _s'(\nn d)\quad \text{and}\quad \ell _{0,s}'(\nn d).
\end{equation}
Here $e_\alpha$ are given by \eqref{BargmannHermite}. For that
reason we consider the mappings
\begin{alignat}{2}
T_1 &: &\,  
\{ c_\alpha \} _{\alpha \in \nn d} &\mapsto \sum _{\alpha \in \nn d}
c_\alpha h_\alpha \label{T1Map}
\intertext{and}
%
T_2 &: &\,  
\{ c_\alpha \} _{\alpha \in \nn d} &\mapsto \sum _{\alpha \in \nn d}
c_\alpha e_\alpha ,\label{T2Map}
\end{alignat}
between sequences and series expansions.

\par

\begin{defn}\label{DefclHclASpaces}
Let $p\in (0,\infty ]$, $s\in \overline {\mathbf R_\flat}$, $\vartheta$
be a weight on $\nn d$, and let $e_\alpha$ be given by
\eqref{BargmannHermite}.
\begin{itemize}
\item the images of $T_1$ in \eqref{T1Map} of the spaces in
\eqref{ellSpaces} are denoted by
\begin{equation}\label{clHSpaces}
\maclH _{0,s}(\rr d),\quad \maclH _s(\rr d),\quad \maclH ^p_{[\vartheta ]}(\rr d),
\quad \maclH _s'(\rr d)\quad \text{and}\quad \maclH _{0,s}'(\rr d),
\end{equation}
respectively. Furthermore, the topologies of the spaces in \eqref{clHSpaces}
are inherited from corresponding spaces in \eqref{ellSpaces}.

\vrum

\item the images of $T_2$ in \eqref{T2Map} of the spaces in
\eqref{ellSpaces} are denoted by
\begin{equation}\label{clASpaces}
\bsycalA _{0,s}(\cc d),\quad \bsycalA _{s}(\cc d),\quad \bsycalA ^p_{\! [\vartheta ]}(\cc d),
\quad \bsycalA _{s}'(\cc d)\quad \text{and}\quad \bsycalA _{0,s}'(\cc d),
\end{equation}
respectively. Furthermore, the topologies of the spaces in \eqref{clASpaces}
are inherited from corresponding spaces in \eqref{ellSpaces}.

\vrum

\item the quasi-norms $\nm f{\maclH ^p_{[\vartheta ]}}$ and
$\nm F{\bsycalA ^p_{[\vartheta ]}}$ of $f\in \maclH _0'(\rr d)$ and
$F\in \bsycalA _0'(\cc d)$, respectively, 
are given by $\nm {\{ c_\alpha \} _{\alpha \in \nn d}}{\ell ^p_{[\vartheta ]}}$,
when $f$ and $F$ are given by \eqref{Hermiteseries} and \eqref{Powerseries},
respectively.
\end{itemize}
\end{defn}

\par

%

For any $f\in \maclH _0'(\rr d)$ with expansion \eqref{Hermiteseries}, the coefficients
$c_\alpha$ are still called the Hermite coefficients for $f$ and are usually denoted
by $c_\alpha (f)$. Hence \eqref{Hermiteseries} takes the form
\begin{equation}\label{fHermite}
f=\sum _\alpha c_\alpha (f) h_\alpha ,\qquad c_\alpha (f) = (f,h_\alpha )_{L^2}.
\end{equation}

\par

Evidently, in \eqref{ellSpaces}, \eqref{clHSpaces}
and \eqref{clASpaces}, the largest spaces are
$$
\ell _0'(\nn d),\quad \maclH _0'(\rr d)
\quad \text{and}\quad
\bsycalA _0'(\cc d),
$$
respectively, and the smallest non-trivial spaces are
$$
\ell _0(\nn d),\quad \maclH _0(\rr d)
\quad \text{and}\quad
\bsycalA _0(\cc d),
$$
respectively. By the definitions it also follows that the following holds true.

\par

\begin{prop}\label{prop:BijSpaces}
Let $s\in \overline {\mathbf R_\flat}$, $p\in (0,\infty ]$ and let $\vartheta$
be a weight on $\nn d$. Then the following is true:
\begin{enumerate}
\item the map $T_1$ in \eqref{T1Map} is a homeomorphism from
$\ell _{0,s}(\nn d)$ to $\maclH _{0,s}(\rr d)$, from
$\ell _{s}(\nn d)$ to $\maclH _{s}(\rr d)$, from
$\ell _{s}'(\nn d)$ to $\maclH _{s}'(\rr d)$, and from
$\ell _{0,s}'(\nn d)$ to $\maclH _{0,s}'(\rr d)$;

\vrum

\item the map $T_2$ in \eqref{T2Map} is a homeomorphism from
$\ell _{0,s}(\nn d)$ to $\bsycalA _{0,s}(\cc d)$, from
$\ell _{s}(\nn d)$ to $\bsycalA _{s}(\cc d)$, from
$\ell _{s}'(\nn d)$ to $\bsycalA _{s}'(\cc d)$, and from
$\ell _{0,s}'(\nn d)$ to $\bsycalA _{0,s}'(\cc d)$;

\vrum

\item the mappings $T_1$ and $T_2$ in \eqref{T1Map} and \eqref{T2Map}
are isometric bijections from $\ell ^p_{[\vartheta ]}(\nn d)$ to
$\maclH ^p_{[\vartheta ]}(\rr d)$, and from $\ell ^p_{[\vartheta ]}(\nn d)$ to
$\bsycalA ^p_{[\vartheta ]}(\cc d)$, respectively.
\end{enumerate}
\end{prop}

\par

In the sequel we let the conjugate exponent $p'$ of $p\in (0,\infty ]$
be defined by
$$
p'=
\begin{cases}
\ \ 1, & p=\infty
\\[1ex]
\displaystyle{\frac p{p-1}}, & 1<p<\infty
\\[1ex]
\ \ \infty , & 0<p\le 1 .
\end{cases}
$$

\par

\begin{rem}
Let $s\in \overline {\mathbf R_\flat}$ and $p\in (0,\infty ]$ be the same as in
Definitions \ref{DefSeqSpaces} and \ref{DefclHclASpaces}. By
straight-forward computations it follows that the following is true:
\begin{itemize}
\item $\ell _{0,s}(\nn d)$,
$\ell _{s}(\nn d)$, $\ell _{0,s}'(\nn d)$, $\ell _s'(\nn d)$ and their topologies
are independent of $p\in (0,\infty ]$;

\vrum

\item the duals of $\ell _0(\nn d)$, $\ell _{0,s}(\nn d)$ and $\ell _{s}(\nn d)$
are equal to $\ell _0'(\nn d)$, $\ell _{0,s}'(\nn d)$ and $\ell _s'(\nn d)$,
respectively (also in topological sense), through unique extensions of the
$\ell ^2$-form on $\ell _0(\nn d)\times \ell _0(\nn d)$;

\vrum

\item if in addition $p\in [1,\infty )$, then the dual of
$\ell ^p_{[\vartheta ]}(\nn d)$ can be identified by $\ell ^{p'}_{[1/\vartheta ]}(\nn d)$
through a unique extension the $\ell ^2$-form on $\ell _0(\nn d) \times \ell _0(\nn d)$.
\end{itemize}
By Proposition \ref{prop:BijSpaces}, the same holds true if the spaces in
\eqref{ellSpaces} and the $\ell ^2$-form are replaced by the spaces in
\eqref{clHSpaces} and the $L^2$-form, respectively, or replaced by the
spaces in \eqref{clASpaces} and the $A^2$-form, respectively.

\par

In particular, if $f$ and $F$ are given by \eqref{Hermiteseries}
and \eqref{Powerseries}, and
$$
f_0=\sum _{\alpha \in \nn d}c_{0,\alpha}h_\alpha \in \maclH _0(\rr d)
\quad \text{and}\quad 
F_0=\sum _{\alpha \in \nn d}c_{0,\alpha}e_\alpha \in \bsycalA _0(\cc d),
$$
for some $\{ c_{0,\alpha }\} _{\alpha \in \nn d}\in \ell _0(\nn d)$, then
$$
(\{ c_{0,\alpha }\} _{\alpha \in \nn d} , \{ c_{\alpha }\} _{\alpha \in \nn d}) _{\ell ^2}
= (f_0,f)_{L^2}= (F_0,F)_{A^2} = \sum _{\alpha \in \nn d} c_{0,\alpha }
\overline{c_{\alpha }} .
$$
\end{rem}

\par

The Bargmann
transform $\mathfrak V_df$ of $f\in \maclH _0'(\rr d)$ is defined as the
right-hand side of \eqref{Powerseries} when $f$ is given by \eqref{Hermiteseries}.
That is,
$$
\mathfrak V_df = \sum _{\alpha \in \nn d}c_\alpha e_\alpha \in \bsycalA _0'(\cc d) 
\quad \text{when}\quad
f = \sum _{\alpha \in \nn d}c_\alpha h_\alpha \in \maclH _0'(\rr d)
$$
%
%
In particular, Proposition \ref{prop:BijSpaces} shows that the mappings
$T_1$, $T_2$ and $\mathfrak V_d$ induce homeomorphisms,
still denoted by $T_1$, $T_2$ and $\mathfrak V_d$, respectively, between
Gelfand-tripples of suitable spaces in \eqref{DefclHclASpaces},
\eqref{clHSpaces} and \eqref{clASpaces}. More precisely,
the mappings
\begin{equation}\label{G-tripple1}
\begin{alignedat}{2}
\big ( \ell _s(\nn d), \ell ^2(\nn d),\ell _s'(\nn d) \big ) \quad & \overset
{\overset{\scriptstyle{T_1}}{}} {\rightarrow } \quad &
&\big ( \maclH _s(\rr d), L^2(\rr d),\maclH _s'(\rr d) \big )
\\[1ex]
\scriptstyle{T_2}\! &  \searrow \quad & & \phantom{\maclH _s(\rr d), L^2}
\downarrow \, \scriptstyle{\mathfrak V_d}
\\[1ex]
& & &\big ( \bsycalA _{s}(\cc d), A^2(\cc d),\bsycalA _{s}'(\cc d) \big )
\end{alignedat}
\end{equation}
are homeomorphisms for every $s\in \overline{\mathbf R_\flat}$.
\begin{equation}\label{G-tripple2}
\begin{alignedat}{2}
\big ( \ell _{0,s}(\nn d), \ell ^2(\nn d),\ell _{0,s}'(\nn d) \big ) \quad & \overset
{\overset{\scriptstyle{T_1}}{}} {\rightarrow } \quad &
&\big ( \maclH _{0,s}(\rr d), L^2(\rr d),\maclH _{0,s}'(\rr d) \big )
\\[1ex]
\scriptstyle{T_2}\! &  \searrow \quad & & \phantom{\maclH _{0,s}(\rr d), L^2}
\downarrow \, \scriptstyle{\mathfrak V_d}
\\[1ex]
& & &\big ( \bsycalA _{0,s}(\cc d), A^2(\cc d),\bsycalA _{0,s}'(\cc d) \big )
\end{alignedat}
\end{equation}
are homeomorphisms for every $s\in \mathbf R_\flat$, and
\begin{equation}\label{G-tripple3}
\begin{alignedat}{2}
\big ( \ell _{[\vartheta]}^p(\nn d), \ell ^2(\nn d),\ell _{[1/\vartheta ]}^{p'}(\nn d) \big ) \quad & \overset
{\overset{\scriptstyle{T_1}}{}} {\rightarrow } \quad &
&\big ( \maclH _{[\vartheta ]}^p(\rr d), L^2(\rr d),\maclH _{[1/\vartheta ]}^{p'}(\rr d) \big )
\\[1ex]
\scriptstyle{T_2}\! &  \searrow \quad & & \phantom{\maclH _{[\vartheta ]}^p(\rr d), L^2}
\downarrow \, \scriptstyle{\mathfrak V_d}
\\[1ex]
& & &\big ( \bsycalA _{[\vartheta ]}^p(\cc d), A^2(\cc d),\bsycalA _{[1/\vartheta ]}^{p'}(\cc d) \big )
\end{alignedat}
\end{equation}
are isometric bijections for every weight $\vartheta$ on $\nn d$ and $p\in (0,\infty ]$. Note that
the latter triples do not necessarily need to be Gelfand-triples. On the other hand,
if more restrictive $p\in [1,\infty )$ and $\vartheta >c$ for some constant $c>0$,
then the triples in \eqref{G-tripple3} are Gelfand-triples.

\par

\subsection{Identification properties between $\bsycalA _{[\vartheta ]}^2(\cc d)$
and $A_{(\omega )}(\cc d)$.}

\par

Any formal power series \eqref{Powerseries} may be identified with
the function
$$
F(z) =\sum _{\alpha \in \nn d}c_\alpha \frac {z^\alpha }{\sqrt {\alpha !}},
$$
provided the series on the right-hand side converges in a neighborhood
of origin. Then $F(z)\in A_d\{ 0\}$. More precisely, the map
\begin{equation}\label{Eq:FormalPowerToAnalFunc}
\begin{aligned}
\sum _{\alpha \in \nn d}c_\alpha e_\alpha
\ &\mapsto \ 
\left (
z\mapsto
\sum _{\alpha \in \nn d}c_\alpha \frac {z^\alpha }{\sqrt {\alpha !}}
\right )
\\[1ex]
\maclH _0(\rr d) &\to A(\cc d)
\end{aligned}
\end{equation}
is injective and continuous. Since
two different power series give rise to two different functions in
$A_d\{ 0\}$, and that every function in $A_d\{ 0\}$ is equal to
its power series near origin, it follows that the map in
\eqref{Eq:FormalPowerToAnalFunc} extends to a bijective and continuous
map from the set of power series which are convergent near origin to $A _d\{ 0\}$.
From now on we often identify power series which converges near origin
with corresponding functions in $A_d\{ 0\}$.

\par

Next we
show that \eqref{Eq:FormalPowerToAnalFunc} extends to a bijective and isometric
map from $\bsycalA _{[\vartheta ]}^2(\cc d)$ to
$A^2_{(\omega )}(\cc d)$ and its norm, when $\vartheta$ and $\omega$
are given by
\begin{align}
\vartheta (\alpha )
&=
\left (  \frac {1}{\alpha !} \int _{\mathbf R_+^d} \omega _0(r)^2r^\alpha
\, dr  \right )^{\frac 12}\label{omegavarthetaRel}
\intertext{and}
\omega (z)
&=
e^{\frac {|z|^2}2}\omega _0(|z_1|^2,\dots ,|z_d|^2),\label{omega0omegaRel}
\end{align}
for some suitable weight $\omega _0$ on $\mathbf R_+^d$. Consequently,
the Bargmann transform is bijective and isometric from
$\maclH _{[\vartheta ]}^2(\rr d)$ to $A^2_{(\omega )}(\cc d)$ for such
choices of $\vartheta$ and $\omega$. See
also \cite[Theorem (4.1)]{JaEi} for related results in one dimension.

\par

\begin{thm}\label{cA2sCharExt}
Let $e_\alpha$ be as in \eqref{BargmannHermite}, $\alpha \in \nn d$,
and let $\omega _0$ be a positive measurable function on $\mathbf R_+^d$.
Also let $\vartheta$ and $\omega$ be weights on $\nn d$ and $\cc d$,
respectively, related to each others by \eqref{omegavarthetaRel} and
\eqref{omega0omegaRel}, and such that
\begin{equation}\label{varthetaCond}
\frac {r^{|\alpha |}}{(\alpha !)^{\frac 12}}\lesssim \vartheta (\alpha ),\quad
\alpha \in \nn d,
\end{equation}
holds for every $r>0$.
Then the map \eqref{Eq:FormalPowerToAnalFunc} extends uniquely to a
bijective and isometric map from $\bsycalA ^2_{[\vartheta ]}(\cc d)$ to
$A^2_{(\omega )}(\cc d)$. In particular, the following is true:
\begin{enumerate}
\item $A^2_{(\omega )}(\cc d)$ is a Hilbert space with
$\displaystyle{\left \{ \frac {e_\alpha}{\vartheta (\alpha )} \right \}
_{\alpha \in \nn d}}$ is an orthonormal basis;

\vrum

\item $\mathfrak V_d$ from $\maclH _0(\rr d)$ to $A(\cc d)$, extends
uniquely to a bijective and isometric map from $\maclH _{[\vartheta ]}^2(\rr d)$
to $A^2_{(\omega )}(\cc d)$, and
\begin{equation}\label{FepExpPre}
\nm {\mathfrak V_df}{A^2_{(\omega )}} = \left ( \sum _\alpha |c_\alpha (f)
\vartheta (\alpha )|^2 \right ) ^{\frac 12},
\end{equation}
when $c_\alpha (f)$ is given by \eqref{fHermite}.
\end{enumerate}
\end{thm}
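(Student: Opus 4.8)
The plan is to prove the identity $A^2_{(\omega )}(\cc d)=\maclA ^2_{[\vartheta ]}(\cc d)$ directly, as an identification of a weighted Bergman space with a weighted space of power series, and then to read off (1) and (2) from the Gelfand-triple diagram \eqref{G-tripple1}$'$. The first step is to unwind Definition \ref{thespaces} in the case $\mascB =L^2$: starting from $\nm F{A^2_{(\omega )}}=\nm{(U_{\mathfrak V}^{-1}F)\omega }{L^2(\rr{2d})}$ with $U_{\mathfrak V}$ as in \eqref{UVdef}, and carrying out the linear change of variables that sends $(x,\xi )$ to the complex variable $z$, the factor $e^{|z|^2/2}$ built into $\omega $ through \eqref{omega0omegaRel} is precisely what cancels the Gaussian produced by $U_{\mathfrak V}^{-1}$. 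I therefore expect the norm to reduce to the weighted integral
\[
\nm F{A^2_{(\omega )}}^2 = \pi ^{-d}\int _{\cc d}|F(z)|^2\,\omega _0(|z_1|^2,\dots ,|z_d|^2)^2\, d\lambda (z),
\]
which is rotation invariant in each complex variable separately, with radial profile governed by $\omega _0$. This reduction is where the particular shape of $\omega $ enters, and it is the bridge between the two descriptions.

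Next I would compute on the monomials. Inserting $F=e_\alpha $ and passing to polar coordinates $z_j=\rho _je^{i\theta _j}$, the angular integrations immediately give the orthogonality $(e_\alpha ,e_\beta )_{A^2_{(\omega )}}=0$ for $\alpha \neq \beta $, and for $\alpha =\beta $ they leave a purely radial integral. The substitution $r_j=\rho _j^2=|z_j|^2$ then turns this radial integral into $\frac 1{\alpha !}\int _{\mathbf R_+^d}\omega _0(r)^2r^\alpha \, dr$, which is exactly $\vartheta (\alpha )^2$ by \eqref{omegavarthetaRel} (the constants in \eqref{omegavarthetaRel}--\eqref{omega0omegaRel} are arranged so that the outcome is precisely $\vartheta (\alpha )^2$). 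Hence $\{e_\alpha \}_{\alpha \in \nn d}$ is an orthogonal system in $A^2_{(\omega )}$ with $\nm{e_\alpha }{A^2_{(\omega )}}=\vartheta (\alpha )$; equivalently $\{e_\alpha /\vartheta (\alpha )\}$ is orthonormal.

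The core of the argument is then the set equality. For $\maclA ^2_{[\vartheta ]}(\cc d)\subseteq A^2_{(\omega )}(\cc d)$ I would use hypothesis \eqref{varthetaCond}: if $\sum _\alpha |c_\alpha \vartheta (\alpha )|^2<\infty $ then $|c_\alpha |\lesssim (\alpha !)^{1/2}R^{-|\alpha |}$ for every $R>0$, whence $\sum _\alpha |c_\alpha e_\alpha (z)|\lesssim \sum _\alpha |z^\alpha |R^{-|\alpha |}$ converges locally uniformly for every $R$, so $F=\sum _\alpha c_\alpha e_\alpha $ is genuinely entire; orthogonality together with monotone convergence on the partial sums then gives $\nm F{A^2_{(\omega )}}^2=\sum _\alpha |c_\alpha \vartheta (\alpha )|^2$. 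For the reverse inclusion I take $F\in A^2_{(\omega )}\subseteq A(\cc d)$, expand it in its locally uniformly convergent Taylor series $F=\sum _\alpha c_\alpha e_\alpha $, and justify term-by-term integration against $e_\alpha $ to identify $(F,e_\alpha )_{A^2_{(\omega )}}=c_\alpha \vartheta (\alpha )^2$; Bessel's inequality on the partial sums then yields $\sum _\alpha |c_\alpha \vartheta (\alpha )|^2\le \nm F{A^2_{(\omega )}}^2<\infty $, so $F\in \maclA ^2_{[\vartheta ]}$, and the two estimates combine to the asserted norm equality. Completeness of the system is automatic, since $(F,e_\alpha )_{A^2_{(\omega )}}=c_\alpha \vartheta (\alpha )^2$ with $\vartheta >0$ forces $F\equiv 0$ when all these inner products vanish. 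I expect the main obstacle to be the rigorous justification of the term-by-term integration and the interchange of sum and integral: one must control the radial integrals uniformly in the truncation, and it is exactly the growth condition \eqref{varthetaCond} that both guarantees entireness of the series in $\maclA ^2_{[\vartheta ]}$ and drives these convergence arguments.

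Finally, (1) and (2) follow formally. The identity $A^2_{(\omega )}=\maclA ^2_{[\vartheta ]}$ with equal norms exhibits $A^2_{(\omega )}$ as isometrically isomorphic to $\ell ^2_{[\vartheta ]}(\nn d)$ through the coefficient map, so it is a Hilbert space with orthonormal basis $\{e_\alpha /\vartheta (\alpha )\}_{\alpha \in \nn d}$, which is (1). For (2), the diagram \eqref{G-tripple1}$'$ already supplies the isometric bijection $\mathfrak V_d\colon \maclH ^2_{[\vartheta ]}(\rr d)\to \maclA ^2_{[\vartheta ]}(\cc d)$ extending $\mathfrak V_d$ on $\maclH _0(\rr d)$; composing it with the identification $\maclA ^2_{[\vartheta ]}=A^2_{(\omega )}$ just established gives the isometric bijection onto $A^2_{(\omega )}(\cc d)$, and evaluating its norm on $f=\sum _\alpha c_\alpha (f)h_\alpha $ produces \eqref{FepExpPre}.
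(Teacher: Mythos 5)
Your proposal is correct and follows essentially the same route as the paper's own proof: the paper likewise works with the reduced weighted-integral form $\nm F{A^2_{(\omega )}}^2=\pi ^{-d}\int _{\cc d}|F(z)|^2\,\omega _0(|z_1|^2,\dots ,|z_d|^2)^2\, d\lambda (z)$, computes $(e_\alpha ,e_\beta )_{A^2_{(\omega )}}=\delta _{\alpha ,\beta }\vartheta (\alpha )^2$ in polar coordinates via Fubini, uses \eqref{varthetaCond} to get entireness of elements of $\maclA ^2_{[\vartheta ]}(\cc d)$, and justifies the norm identity by term-by-term angular integration followed by monotone convergence in the radial variable, after which (1) and (2) follow from the coefficient identification as in \eqref{G-tripple1}$'$. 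The only (cosmetic) difference is organizational: the paper establishes the single identity $\nm F{A^2_{(\omega )}}^2=\sum _\alpha |c_\alpha \vartheta (\alpha )|^2$ for an entire $F$ with Taylor coefficients $c_\alpha$, which yields both inclusions simultaneously, whereas you split the set equality into two inclusions handled by Fatou/monotone convergence and by Bessel's inequality, respectively.
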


\par

\begin{proof}
Let $F\in \bsycalA ^2_{[\vartheta ]}(\cc d)$. By \eqref{varthetaCond} it follows that
the power series expansion of $F$ is absolutely convergent for every $z\in \cc d$.
Hence, $F\in A(\cc d)$, which implies that
$\bsycalA ^2_{[\vartheta ]}(\cc d)\subseteq A(\cc d)$.
When proving that
$A^2_{(\omega )}(\cc d)=\bsycalA ^2_{[\vartheta ]}(\cc d)$, it suffices to
prove (1) and (2), in view of the definition of $\bsycalA _{[\vartheta ]}^2$ norm, since any entire function
is equal to its power series expansion.

\par

We have
\begin{equation*}
(e_\alpha ,e_\beta )_{A ^2_{(\omega )}}
=
\pi ^{-d}\int _{\cc d}  \frac
{z^\alpha \overline z^\beta}{(\alpha !\beta !)^{\frac 12}}
\omega _0(|z_1|^2,\dots ,|z_d|^2)^2\, d\lambda (z).
\end{equation*}
Let
\begin{align*}
z = (r_1e^{i\fy _1},\dots ,r_de^{i\fy _d}),\quad \text{where}
\quad r &=(r_1,\dots ,r_d)\in [0,\infty )^d,
\\[1ex]
\text{and}\quad  \fy &= (\fy _1,\dots ,\fy _d)\in [0,2\pi )^d.
\end{align*}
Then the last integral becomes
$$
\pi ^{-d}\int _{\mathbf R_+^d} \int _{[0,2\pi)^d}  \frac
{r^{\alpha +\beta } e^{i\scal {\alpha -\beta }\fy}}
{(\alpha !\beta !)^{\frac 12}}
\omega _0(r_1^2,\dots ,r_d^2)^2\, r_1\cdots r_d\, d\fy dr.
$$
By the assumptions it follows that the last integral belongs to
$L^1([0,\infty )^d\times [0,2\pi)^d)$. Hence, by Fubbini's theorem
we get
\begin{multline*}
\pi ^{-d}\int _{\mathbf R_+^d} \int _{[0,2\pi)^d}  \frac
{r^{\alpha +\beta } e^{i\scal {\alpha -\beta }\fy}}
{(\alpha !\beta !)^{\frac 12}}
\omega _0(r_1^2,\dots ,r_d^2)^2\, r_1\cdots r_d\, d\fy dr
\\[1ex]
=\pi ^{-d}\int _{\mathbf R_+^d}  \left ( \int _{[0,2\pi)^d}
e^{i\scal {\alpha -\beta }\fy}\, d\fy  \right )
\frac {r^{\alpha +\beta } }{(\alpha !\beta !)^{\frac 12}}
\omega _0(r_1^2,\dots ,r_d^2)^2\, r_1\cdots r_d\, dr
\\[1ex]
=
2^{d}\delta _{\alpha ,\beta}\int _{\mathbf R_+^d} 
\frac {r^{2\alpha} }{\alpha !}
\omega _0(r_1^2,\dots ,r_d^2)\, r_1\cdots r_d\, dr
\\[1ex]
=
\delta _{\alpha ,\beta} \frac {1}{\alpha !} \int _{\mathbf R_+^d}
r^{\alpha} \omega _0(r)^2\, dr =\delta _{\alpha ,\beta} \vartheta (\alpha )^2.
\end{multline*}

\par

Hence, $\displaystyle{\left \{ \frac {e_\alpha}{\vartheta (\alpha )} \right \}
_{\alpha \in \nn d}}$ is an orthonormal sequence in
$A^2_{(\omega )}(\cc d)$, and
\begin{equation}\label{ParsIneq}
\sum _\alpha |c_\alpha \vartheta (\alpha )|^2 \le \nm F{A^2_{(\omega )}},
\end{equation}
when $F$ is given by \eqref{Powerseries}.

\par

For general $F\in A^2_{(\omega )}(\cc d)$ with Taylor series
\eqref{Powerseries}, we have
\begin{multline}\label{FExpComp}
\nm F{A^2_{(\omega )}}^2 = \pi ^{-d}\int _{\cc d}
|F(z)\omega _0(|z_1|^2,\dots ,|z_d|^2)|^2\, d\lambda (z)
\\[1ex]
=
\pi ^{-d} \int _{\rr d_+}\left ( \int _{[0,2\pi )^d} \Phi (r,\fy )
\, d\fy \right )\omega _0(r_1^2,\dots ,r_d^2)^2r_1\cdots r_d\, dr,
\end{multline}
where
$$
\Phi (r,\fy ) = 
\sum _{\alpha ,\beta \in \nn d} \frac {c_\alpha \overline {c_\beta}
r^{\alpha +\beta}e^{i\scal {\alpha -\beta}\fy}}{(\alpha !\beta !)^{\frac 12}}.
$$
By \eqref{varthetaCond} and \eqref{ParsIneq}, it follows
that the radius of convergence of $r_j\mapsto \Phi (r,\fy )$, $j=1,\dots ,d$,
is equal to $\infty$ (with uniform bounds with respect to
$\fy \in [0,2\pi )^d$). This gives
\begin{multline*}
\int _{[0,2\pi )^d} \left (  
\sum _{\alpha ,\beta \in \nn d} \frac {c_\alpha \overline {c_\beta}
r^{\alpha +\beta}e^{i\scal {\alpha -\beta}\fy}}{(\alpha !\beta !)^{\frac 12}}
\right ) \, d\fy
\\[1ex]
=
\sum _{\alpha ,\beta \in \nn d}\int _{[0,2\pi )^d}
\frac {c_\alpha \overline {c_\beta}
r^{\alpha +\beta}e^{i\scal {\alpha -\beta}\fy}}{(\alpha !\beta !)^{\frac 12}}
\, d\fy
=
(2\pi )^d\sum _{\alpha \in \nn d}
\frac {|c_\alpha |^2 r^{2\alpha}}{\alpha !},
\end{multline*}
and by \eqref{FExpComp} we get
\begin{multline*}
\nm F{A^2_{(\omega )}}^2
=
2^d \int _{\rr d_+} \left (
\sum _{\alpha \in \nn d}
\frac {|c_\alpha |^2 r^{2\alpha}}{\alpha !}
\right )
\omega _0(r_1^2,\dots ,r_d^2)^2r_1\cdots r_d\, dr
\\[1ex]
=
\int _{\rr d_+} \left (
\sum _{\alpha \in \nn d}
\frac {|c_\alpha |^2 r^{\alpha}}{\alpha !}
\right )
\omega _0(r_1,\dots ,r_d)^2\, dr
\\[1ex]
=
\sum _{\alpha \in \nn d}
\frac {|c_\alpha |^2}{\alpha !}\int _{\rr d_+} r^{\alpha}
\omega _0(r_1,\dots ,r_d)^2\, dr
=
\sum _{\alpha \in \nn d}
|c_\alpha \vartheta (\alpha )|^2.
%
\end{multline*}
Hence $F\in \bsycalA ^2_{[\theta ]}(\cc d)$ and $\nm F{A^2_{(\omega )}}
= \nm F{\bsycalA ^2_{[\theta ]}}$. This gives the result.
\end{proof}

\par

\begin{rem}
We note that for any $F\in A(\cc d)$, then
$F\in A^2_{(\omega )}(\cc d)$ for some choice of $\omega$.
This follows by letting
$$
\omega (r) =\Big (  \sup _{|w|=|r|}|F(w)|+1\Big )^{-1}e^{-|r|}.
$$
\end{rem}

\par

A common situation is that $\omega$ in Theorem \ref{cA2sCharExt}
is radial symmetric. In this case, 
\eqref{omega0omegaRel} takes the forms
\begin{align}
\omega (z)
&=
e^{\frac {|z|^2}2}\omega _0(|z|^2),\tag*{(\ref{omega0omegaRel})$'$}
\intertext{for some weight $\omega _0$ on $\overline{\mathbf
R_+}$, and in the following proposition we show that
$\vartheta (\alpha )$ becomes}
\vartheta (\alpha )
&=
\left (  \frac {1}{(|\alpha |+d-1)!} \int _0^\infty
\omega _0(r)^2r^{|\alpha | +d-1}\, dr  \right )^{\frac 12}.
\tag*{(\ref{omegavarthetaRel})$'$}
\end{align}
In particular, $\vartheta (\alpha)$
is also radial symmetric with respect to $\alpha$.

\par

\begin{prop}\label{varthetaCondRadSym}
Let $\omega _0$ be a weight on $\overline{\mathbf R_+}$, and let $\vartheta$ and
$\omega$ be weights on $\nn d$ and $\cc d$, respectively, related to each others
by \eqref{omegavarthetaRel}$'$ and \eqref{omega0omegaRel}$'$,
and such that \eqref{varthetaCond} holds for every $R>0$.
%
Then the map \eqref{Eq:FormalPowerToAnalFunc} extends uniquely
to a bijective and isometric map from $\bsycalA ^2_{[\vartheta ]}(\cc d)$
to $A^2_{(\omega )}(\cc d)$.
\end{prop}

\par

\begin{proof}
Let
$$
\Delta _j = \sets {(t_1,\dots ,t_{d-j})\in [0,1]^{d-j}}{t_1+\cdots
+t_{d-j} \le 1},\quad j=1,\dots ,d-1.
$$
We take $r\in \mathbf R_+$ and $(t_1,\dots ,t_{d-1})\in \Delta _1$
as new variables of integration in 
\eqref{omegavarthetaRel}, where
$$
r_j = rt_j,\ \text{when}\ j\neq d
\qquad \text{and}\qquad
r_d = r\left (1-\sum _{j=1}^{d-1}t_j\right ).
$$
By straight-forward computations it follows that the Jacobian is
equal to $r^{d-1}$. Hence, \eqref{omegavarthetaRel} gives that
$
\vartheta (\alpha )^2 = I\cdot J,
$
where
\begin{align}
I &= \frac {1}{\alpha !} \int _{\mathbf R_+} \omega (r)^2r^{|\alpha |+d-1}\, dr
\label{IntegralIDef}
\intertext{and}
J &= \int _{\Delta _{1}} \left (\prod _{j=1}^{d-1} t_j^{\alpha _j}\right )
\left ( 1-\sum _{j=1}^{d-1}t_j   \right )^{\alpha _d}\, dt
\end{align}

\par

We set
\begin{align*}
J_k &= c_k
\idotsint _{\Delta _{k}} \left (\prod _{j=1}^{d-k} t_j^{\alpha _j}\right )
\left ( 1-\sum _{j=1}^{d-k}t_j   \right )^{\beta _k}\, dt_1\cdots dt_{d-k}
\intertext{where}
\beta _k &= \left ( \sum _{j=d-k+1}^d\alpha _{j}\right ) +k-1
\quad \text{and}\quad
c_k = (\beta _k!)^{-1}\left (\prod _{j=d-k+1}^d\alpha _{j}!\right ).
\end{align*}
We have $J=J_1$, and claim that $J=J_k$ for $k=1,\dots ,d-1$,
or equivalently, $J_k=J_{k+1}$, $k=1,\dots ,d-2$.

\par

In fact, let
\begin{align*}
y_k &= (t_1,\dots ,t_{d-k-1})\in \Delta _{k+1},\quad
\gamma _k = (\alpha _1,\dots ,\alpha _{d-k-1})
\intertext{and}
\fy (y_k) &= 1-\sum _{j=1}^{d-k-1}t_j.
\end{align*}
Then
\begin{align}
J_k &= c_k\int _{\Delta _{k+1}}y_k^{\gamma _k}h(y_k)\, dy_k,
\label{Jkidentity}
\intertext{where}
h(y_k) &= \int _0^{\fy (y_k)}u^{\alpha _{d-k}}(\fy (y_k)-u)^{\beta _k}\, du.
\notag
\end{align}

\par

By integrations by parts we get
\begin{multline*}
h(y_k) = \int _0^{\fy (y_k)}u^{\alpha _{d-k}}(\fy (y_k)-u)^{\beta _k}\, du
\\[1ex]
= \frac {\alpha _{d-k}!}{(\beta _k+1)\cdots (\beta _k+\alpha _{d-k})}
\int _0^{\fy (y_k)}(\fy (y_k)-u)^{\beta _k+\alpha _{d-k}}\, du
\\[1ex]
= \alpha _{d-k}!\beta _k!(\beta _k+\alpha _{d-k}+1)!)^{-1}
\fy (y_k)^{\beta _k+\alpha _{d-k}+1}
= \alpha _{d-k}!\beta _k!(\beta _{k+1}!)^{-1}\fy (y_k)^{\beta _{k+1}},
\end{multline*}
and inserting this into \eqref{Jkidentity} gives
$$
J_k = c_k\alpha _{d-k}!\beta _k!(\beta _{k+1}!)^{-1} \int _{\Delta _{k+1}}
y^{\gamma _k}\fy (y_k)^{\beta _{k+1}}\, dy_k = J_{k+1}.
$$
Hence, by integration by parts we obtain
\begin{multline*}
J=J_{d-1} = c_{d-1}\int _0^1t_1^{\alpha _1}(1-t_1)^{\beta _{d-1}}\, dt_1
\\[1ex]
=c_{d-1}\alpha _1! \beta _{d-1}!((\beta _{d-1}+\alpha _1)!)^{-1}
\int _0^1 (1-t_1)^{\beta _{d-1}+\alpha _1}\, dt_1
\\[1ex]
= c_{d-1}\alpha _1! \beta _{d-1}!((\beta _{d-1}+\alpha _1+1)!)^{-1}
=
\frac {\alpha !}{\beta _d!}.
\end{multline*}

\par

The result now follows by from the latter equalities, \eqref{IntegralIDef} and
the fact that $\vartheta (\alpha )^2=I\cdot J$.
\end{proof}

\par

A simple but interesting case appears when $\omega =\omega _h$ in
Proposition \ref{varthetaCondRadSym} is given by
\begin{equation}\label{defomegas}
\omega _h (z) = e^{\frac 12(1-2h)|z|^2}.
\end{equation}

\par

\begin{cor}\label{cA2sChar}\label{CorvarthetaCondRadSym}
Let $F\in A(\cc d)$ be given by \eqref{Powerseries}, $\omega _h$
be given by \eqref{defomegas}, and let $\vartheta _r(\alpha )=e^{r|\alpha |}$
when $r\in \mathbf R$ and $\alpha \in \nn d$. 
\begin{equation}\label{FepExp}
\nm F{A^2_{(\omega _h)}} = e^{rd}
\left ( \sum _{\alpha} \left | c_\alpha  e^{r|\alpha |} \right | ^2\right )^{\frac 12},
\quad
r=-\frac {\log (2h)}2.
\end{equation}

\par

Furthermore, the map $\mathfrak V_d$ from $\maclH _0(\rr d)$ to $A(\cc d)$
extends uniquely to a homeomorphism from $\maclH _{[\vartheta _r]}^2(\rr d)$ to
$A^2_{(\omega _h)}(\cc d)$, and
\begin{equation}\label{FepExp2}
\nm {\mathfrak V_d f}{A^2_{(\omega _h)}} = e^{rd} \nm f{\maclH _{[\vartheta _r]}^2},
\quad r=-\frac {\log (2h)}2.
\end{equation}
\end{cor}

\par

\begin{proof}
Let $F\in A(\cc d)$, $\omega _0(r) = e^{-h(t_1+\cdots +t_d)}$ and let $\vartheta$
be as in Theorem \ref{cA2sCharExt}. Then
\begin{multline*}
\vartheta (\alpha )^2 = \frac {1}{\alpha !} \int _{\rr d_+}\omega _0(t)^2t^\alpha \, dt
= \frac {1}{\alpha !} \int _{\rr d_+} e^{-2h(t_1+\cdots +t_d)}t^\alpha \, dt
\\[1ex]
=
(2h)^{-|\alpha |-d}\frac 1{\alpha !}
\int _{\rr d_+} e^{-(t_1+\cdots +t_d)}r^\alpha \, dt
=
(2h)^{-|\alpha |-d}  
= e^{2rd}\vartheta _r(\alpha )^2.
\end{multline*}
The result now follows from Theorem \ref{cA2sCharExt} or Proposition
\ref{varthetaCondRadSym}.
\end{proof}

\par

\section{Characterizations of $\maclH _{\flat _\sigma} (\rr d)$ and
$\maclH _{0,\flat _\sigma}(\rr d)$, and their duals}\label{sec3}

\par

In this section we consider
$\maclH _{\flat _\sigma} (\rr d)$, $\maclH _{0,\flat _\sigma}(\rr d)$,
$\maclH _{\flat _\sigma} '(\rr d)$ and $\maclH _{0,\flat _\sigma}'(\rr d)$, and their
images under the Bargmann transform.
We show that the Barmann transform is continuous and bijective
from $\maclH _{\flat _1} '(\rr d)$ to $A(\cc d)$, and from $\maclH
_{0,\flat _1} '(\rr d)$ to $A_d\{ 0\}$.
Furthermore, we prove that the images of $\maclH _{\flat
_\sigma} '(\rr d)$ and $\maclH _{0,\flat _\sigma}'(\rr d)$ for $\sigma >1$,
and $\maclH _{\flat _\sigma} (\rr d)$ and $\maclH _{0,\flat _\sigma}(\rr d)$ for any
$\sigma >0$ are sets of entire functions on $\cc d$, obeying suitable boundedness
conditions of exponential types. (Cf. Theorems
\ref{flatSpacesChar} and \ref{flatSpacesCharDual}.)

\par

We use these properties to extend the definition of modulation spaces
to permit any weight function, and show that these spaces are quasi-Banach
spaces, and thereby complete.

\par

In the last part of the section we analyze the test function space $\maclS _C(\rr d)$,
introduced by Gr{\"o}chenig in \cite{Gc2}, and show that this
space agrees with $\maclH _{0,\flat _1}(\rr d)$.

\medspace

The following results identify the spaces in \eqref{clASpaces} in the
case $s=\flat _\sigma$ with convenient topological spaces of analytic
functions. The first one deals with the images of test function spaces, and the
second one with corresponding distribution spaces. Here recall that
$\flat _\infty =\frac 12$, and set
\begin{equation}\label{Eq:kappaDef}
\kappa _1(\sigma )
=
\begin{cases}
\frac {2\sigma}{\sigma +1}, & 0\le \sigma <\infty
\\[1ex]
2, & \sigma =\infty   
\end{cases}
\quad \text{and}\quad
\kappa _2(\sigma )
=
\begin{cases}
\frac {2\sigma}{\sigma -1}, & 1< \sigma <\infty
\\[1ex]
2, & \sigma =\infty   .
\end{cases}
\end{equation}

\par

\begin{thm}\label{flatSpacesChar}
Let $\kappa _1$ be given by
\eqref{Eq:kappaDef}. Then the following is true: 
\begin{enumerate}
\item if $\sigma \in (0,\infty ]$, then the map
\eqref{Eq:FormalPowerToAnalFunc} extends uniquely
to a homeo\-morphism from $\bsycalA _{0,\flat _\sigma} (\cc d)$ to
\begin{equation}
\label{AflatIdentities1}
\sets {F\in A(\cc d)}{|F(z)|\lesssim
e^{r|z|^{\kappa _1(\sigma )}} \ \text{for every}\ r >0} \text ;
\end{equation}

\vrum

\item if $\sigma \in (0,\infty )$, then the map \eqref{Eq:FormalPowerToAnalFunc}
extends uniquely to a homeo\-morphism from $\bsycalA _{\flat _\sigma} (\cc d)$ to
\begin{equation}
\label{AflatIdentities2}
\sets {F\in A(\cc d)}{|F(z)|\lesssim
e^{r|z|^{\kappa _1(\sigma )}} \ \text{for some}\ r >0} .
\end{equation}
\end{enumerate}
%
\end{thm}

\par

\begin{thm}\label{flatSpacesCharDual}
Let $\kappa _2$ be given by
\eqref{Eq:kappaDef}. Then the following is true: 
\begin{enumerate}
\item the map
\eqref{Eq:FormalPowerToAnalFunc} extends
uniquely to a homeo\-morphism from $\bsycalA _{\flat _1}
'(\cc d)$ to $A(\cc d)$, and from $\bsycalA _{0,\flat _1}
'(\cc d)$ to $A_d\{ 0\}$;

\vrum

\item if $\sigma \in (1,\infty )$, then the map
\eqref{Eq:FormalPowerToAnalFunc} extends
uniquely to a homeo\-morphism from $\bsycalA _{\flat _\sigma}
'(\cc d)$ to
\begin{equation}
\label{AflatIdentities4}
\sets {F\in A(\cc d)}{|F(z)|\lesssim
e^{r|z|^{\kappa _2(\sigma )}} \ \text{for every}\ r >0} \text ;
\end{equation}

\vrum

\item if $\sigma \in (1,\infty ]$, then the map
\eqref{Eq:FormalPowerToAnalFunc} extends uniquely
to a homeo\-morphism from $\bsycalA _{0,\flat _\sigma} '(\cc d)$ to
\begin{equation}
\label{AflatIdentities5}
\sets {F\in A(\cc d)}{|F(z)|\lesssim
e^{r|z|^{\kappa _2(\sigma )}} \ \text{for some}\ r >0}.
\end{equation}
\end{enumerate}
\end{thm}

\par

We recall that Remark \ref{RemAnalSpaceTop} explains the topologies for
the sets in \eqref{AflatIdentities1}--\eqref{AflatIdentities5}.

\par

%

We need some preparations for the proof and start to make some
remarks on the topologies of
$A(\cc d)$ and $A_d\{ 0\}$. Let $r_0=(r_{0,1},\dots ,r_{0,d})\in [0,\infty )^d$ and
$F$ be defined in the open
neighbourhood $\Omega \subseteq D_{r_0}(0)\subseteq \cc d$ of
origin. Then $F$ is extendable to an analytic function in $A(D_r(0))$, if and only
if \eqref{Powerseries} holds when $z\in \Omega$
for some $\{ c_\alpha \} _{\alpha \in \nn d}$ which satisfies
\begin{equation}\label{Aseminorm}
\sup _\alpha \left | \frac {c_\alpha r^{\alpha }}
{(\alpha !)^{\frac 12}} \right | <\infty ,
\end{equation}
for every $r\in [0,\infty )^d$ such that $r<r_0$. Here
$(r_1,\dots ,r_d)<(r_{0,1},\dots ,r_{0,d})$ means that
$r_j<r_{0,j}$ for every $j=1,\dots ,d$.

\par

The map which takes the power series in \eqref{Powerseries} into
the left-hand side of \eqref{Aseminorm} defines a semi-norm on $A(D_{r_0}(0))$,
for every $r<r_0$. It follows that $A(D_{r_0}(0))$ is a Fr{\'e}chet space under
the topology defined by these semi-norms. Now we let the topologies in $A(\cc d)$ and
$A_d\{ 0\}$ be defined as the projective limit and inductive limit topology,
respectively, of $A(D_{r}(0))$, $r>0$.

\par

In the following propositions we let
\begin{equation}\label{omegaRspec}
\omega _{r,\sigma}(x,\xi ) = e^{\frac 14(|x|^2+|\xi |^2)}
e^{-r(|x|^{\kappa _1(\sigma )}+|\xi |^{\kappa _1(\sigma )})},
\end{equation}

\par

\begin{prop}\label{calJChar}
Let $p\in (0,\infty ]$, $\sigma \in (0,\infty )$, $\kappa _1$ be as in
\eqref{Eq:kappaDef}, $\mascB$ be a mixed quasi-norm space, $\phi$
be as in \eqref{phidef}, $f\in \maclS _{\frac 12}'(\rr d)$, and let $F=\mathfrak V_df$.
Also let $\omega _{r,\sigma}$ be as in \eqref{omegaRspec}.
Then the following conditions are equivalent:
\begin{enumerate}
\item $f\in \maclH _{\flat _\sigma} (\rr d)$;

\vrum

\item $\nm {F\cdot e^{-r|\cdo |^{\kappa _1(\sigma )}}}{L^p}<\infty$ for some $r>0$;

\vrum

\item $f\in M(\omega _{r,\sigma},\mascB )$ for some $r>0$.
\end{enumerate}
\end{prop}

\par

\begin{prop}\label{calJ0Char}
Let $p\in (0,\infty ]$, $\sigma \in (0,\infty ]$, $\kappa _1$ be as in
\eqref{Eq:kappaDef}, $\mascB$ be a mixed quasi-norm space, $\phi$
be as in \eqref{phidef}, $f\in \maclS _{\frac 12}'(\rr d)$, and let $F=\mathfrak V_df$.
Also let $\omega _{r,\sigma}$ be as in \eqref{omegaRspec}.
Then the following conditions are equivalent:
\begin{enumerate}
\item $f\in \maclH _{0,\flat _\sigma}(\rr d)$;

\vrum

\item $\nm {F\cdot e^{-r|\cdo |^{\kappa _1(\sigma )}}}{L^p}<\infty$
for every $r>0$;

\vrum

\item $f\in M(\omega _{r,\sigma},\mascB )$ for every $r>0$.
\end{enumerate}
\end{prop}

\par

%

\begin{proof}[Proof of Propositions \ref{calJChar} and \ref{calJ0Char}]
If $\mascB =L^p$, then (2) is equivalent to (3), in view of the definitions. Since
$\{ \omega _{R,\sigma} \} _{R>0}$ is an admissible family of weights, Theorem 3.4 in
\cite{To11} shows that (2) and (3) are equivalent for arbitrary $\mascB$.
We need to prove that (2) is equivalent to (1), and then we first assume that (1)
Proposition \ref{calJChar} holds.

\par

Let $C$ and $R$ be such that
$$
|c_\alpha (f) |\le  CR^{|\alpha |}(\alpha !)^{-\frac 1{2\sigma}},
$$
and let $r=|z|$ and $\rho =\frac 1{\kappa _1(\sigma )}$. Then
\begin{multline*}
|F(z)| = \left | \sum _\alpha c_\alpha (f) \frac {z^\alpha }{(\alpha !)^{\frac 12}} \right |
\le
C \sum _\alpha \frac {(R_1r)^{|\alpha |}}{(\alpha !)^\rho}
\\[1ex]
\lesssim
\sup _{k\ge 0}\frac {(R_2r)^k}{(k!)^{\rho}}
\lesssim
\sup _{u\ge 1}\frac {(R_3r)^u}{u^{\rho u}},
\end{multline*}
for some $R_j$, $j\ge 1$.

\par

We need to maximize
$$
g(u) = \ln \left ( \frac {r_1^u}{u^{\rho u}}  \right ) = u\ln r_1 -\rho u \ln u,
\quad u\ge 1,
$$
where $r_1=R_3r$. By differentiation it follows that the maximum is attained for
$$
u=u_0=\frac {r_1^{\frac 1\rho}}e,
$$
giving that
\begin{equation*}
|F(z)|
\lesssim
e^{g(u_0)} = e^{\rho u_0} = e^{c|z|^{\kappa _1(\sigma )}},
\end{equation*}
for some constant $c>0$. This gives (2) in Proposition \ref{calJChar}.

\par

From these arguments it also follows that that there is a constant $C>0$
such that $c$ can be chosen in such way that
$$
C^{-1}\le Rc\le C,
$$
which shows that Proposition \ref{calJ0Char} (1) implies
Proposition \ref{calJ0Char} (2), also in the case $\sigma =\infty$.

\par

Assume instead that (2) in Proposition \ref{calJChar} holds, let $c_\alpha$
be chosen such that \eqref{Powerseries} holds, $k=|\alpha |+d-1$ and let
$\vartheta (\alpha )$ be given by \eqref{omegavarthetaRel}$'$. Then
$$
\vartheta (\alpha )^2 =\frac 1{k!}\int _0^\infty e^{-2cr^{\frac 1{2\rho}}}r^k\, dr.
$$
By taking $u=2cr^{\frac 1{2\rho}}$ as new variable of integration we obtain
\begin{equation}\label{Eq:varthetaGammaEst}
\vartheta (\alpha )^2
=
\frac {2\rho} {(2c)^{2\rho (k+1}) k!}\int _0^\infty e^{-u}u^{2\rho (k+1)-1}\, du
\asymp \frac {\Gamma (2\rho (k+1))}{(2c)^{2\rho k}k!}.
\end{equation}

\par

By Stirling's formula we get
\begin{multline*}
\vartheta (\alpha )^2
\asymp
\theta _1^k \frac {\displaystyle{\left ( \frac {2\rho (k+1)-1}e \right ) ^{2\rho (k+1)-1}}}
{\displaystyle{ \left (  \frac ke  \right )^k}}
\gtrsim \theta _2^k k^{(2\rho -1)k}
\\[1ex]
= \theta _2^k k^{\frac k\sigma}\gtrsim \theta _3^k(k!)^{\frac 1\sigma}
\gtrsim \theta _4^{|\alpha |}(\alpha !)^{\frac 1\sigma},
\end{multline*}
for some constants $\theta _j$, $j=1,\dots ,4$.

\par

If $\theta =\theta _4^{\frac 12}$, then Theorem \ref{cA2sCharExt} now
gives
\begin{multline}\label{FJAnalComp}
\sum _\alpha |c_\alpha  \theta ^{|\alpha |}(\alpha !)^{\frac 1{2\sigma}}|^2
\lesssim
\sum _\alpha |c_\alpha  \vartheta (\alpha )|^2 \asymp
\nm {F\cdot e^{-c|\cdo |^{\frac 1\rho}}}{L^2} <\infty ,
\end{multline}
provided $c>$ is chosen large enough. This gives (1) in Proposition
\ref{calJChar}.

\par

Evidently, the constants $\theta _j$ here above can be chosen in
such way that
$$
C^{-1}\le \theta _j c\le C,
$$
for some constant $C>0$ which is independent of $c$, which shows that
(2) in Proposition \ref{calJ0Char} implies (1) in Proposition \ref{calJ0Char}
when $\sigma <\infty$. By similar arguments, or using Corollary
\ref{CorvarthetaCondRadSym}, it follows that the latter implication
also holds in the case $\sigma =\infty$.
\end{proof}

\par

For the duals $\maclH _{\flat _\sigma} '(\rr d)$ and $\maclH _{0,\flat _\sigma} '(\rr d)$
we have now the following.

\par

\begin{prop}\label{calJCharDual}
Let $p\in [1,\infty ]$, $\sigma \in (1,\infty )$, $\kappa _2$ be as in
\eqref{Eq:kappaDef}, $\phi$
be as in \eqref{phidef}, $f\in \maclH _0'(\rr d)$, and let $F=\mathfrak V_df$.
Then the following conditions are equivalent:
\begin{enumerate}
\item $f\in \maclH _{\flat _\sigma} '(\rr d)$;

\vrum

\item $\nm {F\cdot e^{-r|\cdo |^{\kappa _2(\sigma )}}}{L^p}<\infty$ for every $r>0$.
\end{enumerate}
\end{prop}

\par

\begin{prop}\label{calJ0CharDual}
Let $p\in [1,\infty ]$, $\sigma \in (1,\infty ]$, $\kappa _2$ be as in
\eqref{Eq:kappaDef}, $\phi$
be as in \eqref{phidef}, $f\in \maclH _0'(\rr d)$, and let $F=\mathfrak V_df$.
Then the following conditions are equivalent:
\begin{enumerate}
\item $f\in \maclH _{0,\flat _\sigma} '(\rr d)$;

\vrum

\item $\nm {F\cdot e^{-r|\cdo |^{\kappa _2(\sigma )}}}{L^p}<\infty$ for some $r>0$.
\end{enumerate}
\end{prop}

\par

For the proofs we need the following lemma.

\par

\begin{lemma}\label{GenExpEst}
Let $R>0$, $0<\rho \le \frac 12$, $F\in A(\cc d)$ and let $p\in [1,\infty ]$. Then
there are positive constants $c_1$, $c_2$ and $C$ which only depends on
$\rho$ and $p$ such that
\begin{equation}\label{AnalLpNormEsts}
C^{-1}\nm {Fe^{-c_1R|\cdo |^{\frac 1\rho}}}{L^p}
\le \nm {Fe^{-R|\cdo |^{\frac 1\rho}}}{L^\infty}
\le
C\nm {Fe^{-c_2R|\cdo |^{\frac 1\rho}}}{L^p}.
\end{equation}
\end{lemma}

\par

\begin{proof}
The first inequality in \eqref{AnalLpNormEsts} is a straight-forward consequence
of H{\"o}lder's inequality.

\par

By H{\"o}lder's inequality again, it suffices to show that the second inequality
holds for $p=1$. Let $z\in \cc d$ be fixed and let
$$
\Omega _z = \sets {w\in \cc d}{2^{-1}\le |w_j-z_j|\le 1,\ j=1,\dots ,d}.
$$
By the mean-value property of analytic functions we obtain
\begin{multline*}
|F(z)e^{-R|z|^{\frac 1\rho}}|
=
\left ( \frac {3\pi}4 \right )^d \left | \int _{\Omega _z}F(w)
e^{-R|z|^{\frac 1\rho}}\, d\lambda (w)  \right |
\\[1ex]
\lesssim
\int _{\Omega _z}|F(w)
e^{-Rc|w|^{\frac 1\rho}}|\, d\lambda (w)
\le
\nm {F\cdot 
e^{-Rc|\cdo |^{\frac 1\rho}}}{L^1(\cc d)}.
\end{multline*}
By taking supremum over all $z\in \cc d$, the second
inequality in \eqref{AnalLpNormEsts} follows.
\end{proof}

\par

Propositions \ref{calJCharDual} and \ref{calJ0CharDual} follow
by letting $\rho =\frac 1{\kappa _2(\sigma )}$ instead of
$\rho =\frac 1{\kappa _1(\sigma )}$ in the proof of Propositions
\ref{calJChar} and \ref{calJ0Char}, and using
Lemma \ref{GenExpEst}. The details are left for the reader.

\par

\begin{proof}[Proof of Theorems \ref{flatSpacesChar} and \ref{flatSpacesCharDual}]
First we prove the first part of Theorem \ref{flatSpacesCharDual} (1), which follows
if we prove that $\mathfrak V_d$ is homeomorphic from $\maclH _{\flat _1}'(\rr d)$
to $A(\cc d)$. We have that
\eqref{Aseminorm} is true for every $r>0$, if and only if
\begin{equation}\label{AlternativeTopSum}
\sum _\alpha  \frac {|c_\alpha |^2R^{|\alpha |}}{\alpha !} <\infty ,
\end{equation}
for every $R>0$, and that the topology on $A(\cc d)$, induced by the
latter sum, remains the same. Hence, if we let $\mathfrak V_df$ be
the power series expansion in \eqref{Powerseries} when $f\in
\maclH _{\flat _1} '(\rr d)$ is given by \eqref{fHermite}, it follows that
$\mathfrak V_d$ is continuous and linear from
$\maclH _{\flat _1} '(\rr d)$ to $A(\cc d)$.

\par

Furthermore, since the Taylor series for an analytic
function is unique, it follows that $\mathfrak V_d$ is injective,
and since every entire function is equal to its Taylor series, it 
also follows that $\mathfrak V_d$ is surjective. This gives the first part
of Theorem \ref{flatSpacesCharDual} (1), and the second part follows
by similar arguments and is left for the reader.

\par

By (1)--(3) in Propositions \ref{calJChar} it follows that
$\bsycalA _{\flat _\sigma} (\cc d)$ is contained in
\eqref{AflatIdentities2}. On the other hand, if $F$ belongs to the
set in \eqref{AflatIdentities2}, which is
a subset of $A(\cc d)$, then $F=\mathfrak V_df$ for some
$f\in \maclH _{\flat _1}'(\rr d)$. By \eqref{FJAnalComp} it now
follows that $f\in \maclH _{\flat _\sigma} (\rr d)$, and hence
$F= \mathfrak V_df\in \bsycalA _{\flat _\sigma} (\cc d)$.
This gives Theorem \ref{flatSpacesChar} (2).

\par

By using Propositions \ref{calJ0Char}, \ref{calJCharDual} and
\ref{calJ0CharDual} instead of Proposition \ref{calJChar}, the
remaining parts follow in similar ways. The details are left for the reader.
\end{proof}

\par

\subsection{A broad family of modulation spaces}

\par

Next we show that certain parts of the modulation space theory in
\cite{F1,FGW,Gc2,To11} can be extended, by letting the modulation
spaces be defined in the framework of $\maclH _{\flat _1}'(\rr d)$. More
precisely, for $\phi$ as in Definition \ref{bfspaces2}, and $\omega$ and
$\mascB$ as in Definition \ref{thespaces}, we let the
modulation space $M(\omega ,\mascB )$ be the set of $f\in \maclH _{\flat _1} '(\rr d)$
such that \eqref{modnorm2} holds. This extends the notion of modulation spaces
in Definition \ref{bfspaces2}. The following result shows that \cite[Theorem 3.4]{To11} holds
in a broader context.

\par

\begin{thm}\label{ABComplete}
Let $\omega$ and $\mascB$ be as in Definition \ref{thespaces}. Then
the following is true:
\begin{enumerate}
\item $A(\omega ,\mascB )$, $B(\omega ,\mascB )$ and $M(\omega ,\mascB )$
are quasi-Banach spaces. If in addition $\nu _1(\mascB )\ge 1$, then
$A(\omega ,\mascB )$, $B(\omega ,\mascB )$ and $M(\omega ,\mascB )$
are Banach spaces;

\vrum

\item the Bargmann transform is isometric and bijective
from $M(\omega ,\mascB)$ to $A(\omega ,\mascB)$.
\end{enumerate}
\end{thm}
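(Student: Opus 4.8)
The plan is to reduce all three completeness statements, and the isometry in (2), to elementary properties of the underlying mixed quasi-norm space $\mascB$, using the factorization $\mathfrak V_d=U_{\mathfrak V}\circ V_\phi$ recorded just before \eqref{UVdef} together with the bijectivity of $\mathfrak V_d$ from $\maclH _\flat '(\rr d)$ onto $A(\cc d)$ proved in Theorem \ref{BargmannEntire}. Throughout I would use that $\mascB$, being built by iterating the spaces $L^{p_j}$, is a quasi-Banach space that embeds continuously into $L^r_{loc}(\cc d)$ for the exponent $r\le \nu _1(\mascB )$ fixed in Definition \ref{thespaces}, and that it is a genuine Banach space precisely when $\nu _1(\mascB )\ge 1$.

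First I would identify $B(\omega ,\mascB )$ with $\mascB$. By the very definition of the quasi-norm, the linear map $F\mapsto (U_{\mathfrak V}^{-1}F)\omega$ sends $B(\omega ,\mascB )$ isometrically into $\mascB$. It is injective because $\omega >0$ and $U_{\mathfrak V}^{-1}$ is a bijection on $L^r_{loc}(\cc d)$, and it is onto because, given $G\in \mascB \subseteq L^r_{loc}(\cc d)$, the function $F=U_{\mathfrak V}(G/\omega )$ lies in $L^r_{loc}(\cc d)$ (as $1/\omega \in L^\infty _{loc}$ and $U_{\mathfrak V}$ is multiplication by a smooth nonvanishing factor composed with a dilation) and is mapped to $G$. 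Hence $B(\omega ,\mascB )$ is isometrically isomorphic to $\mascB$, so it is a quasi-Banach space, and a Banach space when $\nu _1(\mascB )\ge 1$.

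Next I would prove that $A(\omega ,\mascB )=A(\cc d)\cap B(\omega ,\mascB )$ is a \emph{closed} subspace of $B(\omega ,\mascB )$, which then makes it complete. The point is a local estimate: on any compact set $\omega$ is bounded above and below, $U_{\mathfrak V}^{-1}$ is a dilation times a smooth nonvanishing factor, and $\mascB \hookrightarrow L^r_{loc}(\cc d)$, so the $B(\omega ,\mascB )$-quasi-norm dominates the $L^r$-quasi-norm over compacta; that is, $B(\omega ,\mascB )\hookrightarrow L^r_{loc}(\cc d)$. Consequently a sequence converging in $B(\omega ,\mascB )$ converges in $L^r_{loc}(\cc d)$, and for \emph{analytic} functions local $L^r$-convergence forces local uniform convergence through the sub-mean-value inequality, so the limit is again analytic. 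This transfer of analyticity across the limit is the step I expect to carry the genuine content; the rest is bookkeeping with the (quasi-)norm.

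Finally I would push everything through the Bargmann transform. For $f\in \maclH _\flat '(\rr d)$ the factorization gives $V_\phi f=U_{\mathfrak V}^{-1}(\mathfrak V_df)$, whence
\[
\nm f{M(\omega ,\mascB )}=\nm{V_\phi f\,\omega}{\mascB}=\nm{(U_{\mathfrak V}^{-1}\mathfrak V_df)\,\omega}{\mascB}=\nm{\mathfrak V_df}{B(\omega ,\mascB )},
\]
so $\mathfrak V_d$ is a linear isometry of $M(\omega ,\mascB )$ into $B(\omega ,\mascB )$. Since $\mathfrak V_df\in A(\cc d)$ by Theorem \ref{BargmannEntire}, the image lies in $A(\omega ,\mascB )$; conversely, given $F\in A(\omega ,\mascB )\subseteq A(\cc d)$, the unique $f\in \maclH _\flat '(\rr d)$ with $\mathfrak V_df=F$ furnished by Theorem \ref{BargmannEntire} obeys $\nm f{M(\omega ,\mascB )}=\nm F{B(\omega ,\mascB )}<\infty$, so $f\in M(\omega ,\mascB )$. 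Thus $\mathfrak V_d\colon M(\omega ,\mascB )\to A(\omega ,\mascB )$ is an isometric bijection, which is assertion (2); in particular $M(\omega ,\mascB )$ is isometrically isomorphic to the complete space $A(\omega ,\mascB )$, hence is a quasi-Banach space, and a Banach space when $\nu _1(\mascB )\ge 1$, completing (1).
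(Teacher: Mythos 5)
Your proposal is correct and follows essentially the same route as the paper: completeness of $B(\omega,\mascB)$ from that of $\mascB$, completeness of $A(\omega,\mascB)$ by upgrading quasi-norm convergence to locally uniform convergence of analytic functions, and then the isometric bijection $\mathfrak V_d\colon M(\omega ,\mascB )\to A(\omega ,\mascB )$ via Theorem \ref{BargmannEntire}, which transfers completeness to $M(\omega,\mascB)$. The only difference is one of detail: where you invoke ``the sub-mean-value inequality'' for local $L^r$-control with $r=\nu_1(\mascB)$ possibly less than $1$, the paper makes this explicit by applying Jensen's inequality to the plurisubharmonic function $\log |F_j-F_k|$ and then integrating over polydiscs.
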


\par

\begin{proof}
The statements in (1) concerning $B(\omega ,\mascB )$ are obvious. For the other
spaces, we first
prove that $A(\omega ,\mascB )$ is complete and thereafter show that (2) holds.
The completeness of $A(\omega ,\mascB )$ is then carried over to
$M(\omega ,\mascB )$.

\par

Let $F_j$, $j=1,2,\dots$ be a Cauchy sequence in $A(\omega ,\mascB )$ and set
$G_{j,k}=F_j-F_k$. Then $\nm {F_j-F}{B(\omega ,\mascB )}\to 0$ for some
$F\in B(\omega ,\mascB )$, as $j$ tends to $\infty$. We have to prove that
$F\in A(\cc d)$, and it suffices to prove that $F_j\to F$ locally uniformly.

\par

We have that $\log |G_{j,k}(z)|$ is pluri-subharmonic for every $j.k\ge 1$. By
using this fact and Jensen's inequality we get
\begin{multline*}
\log |G_{j,k}(z)|^p \le (2\pi )^{-d}\int _{[0,2\pi )^d} \log |G_{j,k}(z_1+r_1e^{i\fy _1} ,
\dots ,z_d+r_de^{i\fy _d})|^p \, d\fy
\\[1ex]
\le \log \left ((2\pi )^{-d}\int _{[0,2\pi )^d} |G_{j,k}(z_1+r_1e^{i\fy _1} ,
\dots ,z_d+r_de^{i\fy _d})|^p \, d\fy \right ).
\end{multline*}
for every $r_1,\dots ,r_d\ge 0$, where $p=\nu _1(\mascB )$. Hence,
$$
|F_j(z)-F_k(z)|^p \le (2\pi )^{-d}\int _{[0,2\pi )^d} |G_{j,k}(z_1+r_1e^{i\fy _1} ,
\dots ,z_d+r_de^{i\fy _d})|^p \, d\fy .
$$
By multiplying the previous inequality with $r_1\cdots r_d$
and integrating over a small rectangle $[0,\ep ]^d$
with respect to $(r_1,\dots ,r_d)$, we get
$$
|F_j(z)-F_k(z)| \le C_\ep \nm {F_j-F_k}{L^p(\Omega _\ep)},
$$
where $C_\ep >0$ only depends on $\ep >0$. Here $\Omega _\ep$
is the polydisc 
$$
\sets {w\in \cc d}{|w_j-z_j|<\ep,\ j=1,\dots ,d}.
$$

\par

Hence, if $\ep _1<\ep _2$, 
then
$$
\nm {F_j-F_k}{L^\infty (\Omega _{\ep _1})} \lesssim
\nm {F_j-F_k}{L^p(\Omega _{\ep _2})} \lesssim
\nm {F_j-F_k}{B(\omega ,\mascB )},
$$
where the last inequality follows from H{\"o}lder's inequality. Consequently,
$F_j$, $j=1,2,\dots$, is a Cauchy sequence in $L^\infty (\Omega _{\ep _1})$,
and it follows that $F_j\to F_0$ in $L^\infty (\Omega _{\ep _1})$ for some
$F_0\in A(\Omega _{\ep _1})$.  Since
$F_k\to F$ in $B(\omega ,\mascB )$ as $k$ tends to $\infty$, it follows,
by passing to subsequences of $F_j$ if necessary, that $F=F_0$ a.{\,}e.,
and the analyticity of $F$ follows. This proves that $A(\omega ,\mascB )$
is complete.

\par

By the definitions it follows that the Bargmann transform from
$\maclH _{\flat _1} '(\rr d)$ to $A(\cc d)$ restricts to an isometric and injective
map from $M(\omega ,\mascB)$ to $A(\omega ,\mascB)$.
On the other hand, if $F\in A(\omega ,\mascB )$, then $F=\mathfrak V_df$
for some $f\in \maclH _{\flat _1} '(\rr d)$, in view of Theorem \ref{flatSpacesChar}.
By the definitions it follows that $f$ has the same (finite)
quasi-norm in $M(\omega ,\mascB )$ as $\nm F{A(\omega ,\mascB )}$.
Hence $f\in M(\omega ,\mascB )$, and (2) follows. This gives the result.
\end{proof}

\par

\subsection{Comparisons between Pilipovi{\'c} spaces and
a test function space introduced by Gr{\"o}chenig}

\par

We finish the section by performing comparisons between
the spaces $\maclH _{\flat _1} (\rr d)$, $\maclH _{0,{\flat _1}}(\rr d)$ and the space
$\maclS _C(\rr d)$, introduced by Gr{\"o}chenig in \cite{Gc2}.
We recall that $\maclS _C(\rr d)$ consists of all
$f\in L^2(\rr d)$ such that $f=V_\phi ^*F$, for some $F$ belonging
to $L^\infty (\rr d)\cap \mascE '(\rr d)$. 
Here $\phi (x)=\pi ^{-\frac d4}e^{-\frac {|x|^2}2}$, as
usual. In particular, $f\in \maclS _C(\rr d)$, if and only if
$$
f(x) = (2\pi )^{-\frac d2}\iint _{\rr {2d}} F(y,\eta )e^{-\frac 12|x-y|^2}
e^{i\scal x\eta}\, dyd\eta ,
$$
for some $L^\infty (\rr d)\cap \mascE '(\rr d)$.

\par

\begin{lemma}\label{GrochSpaceBargm}
Let $F\in L^\infty (\cc d)$. Then the Bargmann transform of $f=V_\phi ^*F$
is given by $\Pi _AF_0$, where
$$
F_0(x,\xi ) = (2\pi ^3)^{\frac d4}F(\sqrt 2 x,-\sqrt 2 \xi )e^{\frac 12(|x|^2+|\xi |^2)}
e^{-i\scal x\xi}.
$$

\par

Moreover, the image of $\maclS _C(\rr d)$ under the Bargmann
transform is given by
$$
\sets {\Pi _AF}{F\in L^\infty (\cc d)\bigcap \mascE '(\cc d)}.
$$
\end{lemma}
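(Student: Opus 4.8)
The plan is to reduce the first claim to the intertwining relation $\mathfrak V_d=U_{\mathfrak V}\circ V_\phi$ recorded after \eqref{bargstft2} together with \eqref{UVdef}, and to exploit the fact that, by Moyal's identity, $V_\phi$ is isometric from $\lrd$ into $L^2(\rr{2d})$. Consequently $V_\phi V_\phi^*$ is the orthogonal projection of $L^2(\rr{2d})$ onto the closed range $V_\phi(\lrd)$. Writing $f=V_\phi^*F$ and applying the Bargmann transform gives
\begin{equation*}
\mathfrak V_df=U_{\mathfrak V}V_\phi V_\phi^*F ,
\end{equation*}
so the whole matter comes down to identifying the conjugated projection $U_{\mathfrak V}(V_\phi V_\phi^*)U_{\mathfrak V}^{-1}$ with the reproducing operator $\Pi_A$ of \eqref{reproducing}.

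First I would verify that $U_{\mathfrak V}$ is a \emph{unitary} map from $L^2(\rr{2d})$ onto $B^2(\cc d)$. Inserting \eqref{UVdef} into the definition \eqref{A2norm} of the $B^2$-norm, the factor $e^{(|x|^2+|\xi|^2)/2}$ exactly cancels the Gaussian weight $e^{-|z|^2}$ in $d\mu$, and after the change of variables $(x,\xi)\mapsto (2^{1/2}x,-2^{1/2}\xi)$ the surviving constant $(2\pi)^d\pi^{-d}2^{-d}=1$, so that $\nm{U_{\mathfrak V}G}{B^2}=\nm G{L^2}$. Since $\mathfrak V_d=U_{\mathfrak V}V_\phi$ has range $A^2(\cc d)$ by Bargmann's theorem, $U_{\mathfrak V}$ carries $V_\phi(\lrd)$ onto $A^2(\cc d)$; being unitary, it therefore transports the orthogonal projection onto $V_\phi(\lrd)$ into the orthogonal projection of $B^2(\cc d)$ onto $A^2(\cc d)$, which is precisely $\Pi_A$ (cf. Proposition \ref{mainstep1prop}{\,}(2)). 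Hence $\mathfrak V_df=\Pi_A(U_{\mathfrak V}F)$, and evaluating $U_{\mathfrak V}F$ by \eqref{UVdef} produces a function of exactly the form $F(2^{1/2}x,-2^{1/2}\xi)e^{(|x|^2+|\xi|^2)/2}e^{-i\scal x\xi}$, i.e. $\Pi_AF_0$. For $F\in L^\infty(\cc d)$ that is not square integrable the same identity is read off by extending this argument by duality to $\bsySig '(\rr d)$, or directly from the single Gaussian integral $\int \mathfrak A_d(z,x)e^{-|x-y|^2/2}e^{i\scal x\eta}\,dx$ in the explicit reconstruction formula for $f$. The one point demanding care is the exact dimensional constant $(2\pi^3)^{d/4}$: it is pinned down by the normalisation of the synthesis operator $V_\phi^*$ used in the definition of $\maclS_C(\rr d)$, and this constant bookkeeping is the only genuine obstacle, the structural identity $\mathfrak V_df=\Pi_A(U_{\mathfrak V}F)$ being forced.

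For the ``moreover'' statement I would combine the first part with an elementary stability argument. By the above, the image of $\maclS_C(\rr d)$ under $\mathfrak V_d$ is $\sets{\Pi_AF_0}{F\in L^\infty(\cc d)\cap\mascE'(\cc d)}$, so it suffices to show that $F\mapsto F_0$ is a bijection of $L^\infty(\cc d)\cap\mascE'(\cc d)$ onto itself. Now $F\mapsto F_0$ is the composition of the invertible linear substitution $(x,\xi)\mapsto (2^{1/2}x,-2^{1/2}\xi)$ with multiplication by the smooth, nowhere vanishing function $m(x,\xi)=(2\pi^3)^{d/4}e^{(|x|^2+|\xi|^2)/2}e^{-i\scal x\xi}$. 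The linear substitution preserves both $L^\infty$ and compact support, while multiplication by $m$ preserves the support exactly (since $m\neq 0$) and is bounded, together with $1/m$, on every fixed compact set; hence $F\mapsto F_0$ and its inverse both map $L^\infty(\cc d)\cap\mascE'(\cc d)$ into itself. The apparent danger from the unbounded factor $e^{(|x|^2+|\xi|^2)/2}$ is harmless exactly because the elements of $\maclS_C(\rr d)$ are synthesised from \emph{compactly supported} symbols. Therefore $\sets{\Pi_AF_0}{F\in L^\infty\cap\mascE'}=\sets{\Pi_AG}{G\in L^\infty(\cc d)\cap\mascE'(\cc d)}$, which is the asserted description of the image.
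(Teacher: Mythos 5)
Your route is genuinely different from the paper's. The paper's proof (stated only as ``straight-forward computation, details left for the reader'') evaluates $\mathfrak V_d(V_\phi ^*F)$ directly via the Gaussian integral $\int \mathfrak A_d(z,x)e^{-|x-y|^2/2}e^{i\scal x\eta}\, dx$ and then identifies the result with $\Pi _AF_0$. You instead conjugate projections: since $\nm \phi{L^2}=1$, the map $V_\phi$ is isometric, $V_\phi V_\phi ^*$ is the orthogonal projection onto $V_\phi (L^2(\rr d))$, and your unitarity computation for $U_{\mathfrak V}$ is correct, so $\mathfrak V_d V_\phi ^* = U_{\mathfrak V}(V_\phi V_\phi ^*)U_{\mathfrak V}^{-1}\, U_{\mathfrak V}=\Pi _A U_{\mathfrak V}$ on $L^2(\rr {2d})$. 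This is a clean structural argument and it buys the identity without any Gaussian integration; what the paper's computational route buys is the exact constant, automatically. Two smaller points: Proposition \ref{mainstep1prop}{\,}(2) only asserts that $\Pi _A$ is a \emph{continuous} projection, so to identify it with the orthogonal projection of $B^2(\cc d)$ onto $A^2(\cc d)$ you also need its self-adjointness, which follows from the Hermitian symmetry $\overline{e^{(w,z)}}=e^{(z,w)}$ of the kernel in \eqref{reproducing} (or from \cite{B1}); and for $F\in L^\infty (\cc d)\setminus L^2(\cc d)$ the Hilbert-space argument does not literally apply, so the extension you only sketch is actually needed for the first claim (for the ``moreover'' part it is harmless, since $L^\infty \cap \mascE '\subset L^2$). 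Your bijection argument for $F\mapsto F_0$ on $L^\infty (\cc d)\cap \mascE '(\cc d)$ is correct and, given the first claim, settles the second.

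The genuine gap is the constant, which you explicitly defer and never pin down, although the lemma asserts a specific value. With the genuine $L^2$-adjoint $V_\phi ^*$, your identity and \eqref{UVdef} give $F_0=U_{\mathfrak V}F$ with prefactor $(2\pi )^{d/2}=(4\pi ^2)^{d/4}$, not $(2\pi ^3)^{d/4}$. The paper's definition of $\maclS _C(\rr d)$ is the displayed synthesis formula with the \emph{unnormalized} Gaussian $e^{-|x-y|^2/2}$ in place of $\phi$, i.e. $f=\pi ^{d/4}V_\phi ^*F$; feeding this through your identity yields the prefactor $\pi ^{d/4}(2\pi )^{d/2}=(4\pi ^3)^{d/4}$, which still differs from the stated $(2\pi ^3)^{d/4}$ by a factor $2^{d/4}$. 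So the ``constant bookkeeping'' you set aside is precisely where the content of the stated formula lies: carrying it out shows that the paper's own normalization leads to $(4\pi ^3)^{d/4}$ (suggesting the constant in the lemma is a typo), whereas your argument as written produces yet another constant and simply asserts that the discrepancy can be absorbed into the normalization of $V_\phi ^*$. A proof of the lemma as stated must do this computation explicitly; without it, the first claim --- the exact formula for $F_0$ --- remains unverified.
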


\par

Here recall that the map $\Pi _A$ is defined
by \eqref{reproducing}.

\par

\begin{proof}
The result follows by straight-forward computation of the Bargmann
transform of elements given by $V_\phi ^*F$ when $F\in L^\infty (\rr d)$
or $F\in L^\infty (\rr d) \cap \mascE '(\rr d)$. The details are left
for the reader.
\end{proof}

\par

The next result links
\begin{align}
\Omega _{0,R} &= \sets {\Pi _AF}{F\in L^\infty (\cc d),\ \supp F\subseteq B_{R}(0)},
\label{Omega0Def}
\\[1ex]
\Omega _{1,R} &= \sets {\Pi _AF}{F\in L^\infty (\cc d),\ \supp F\subseteq D_{d,R}}
\label{Omega1Def}
\intertext{and}
\Omega _{2,R} &= \Sets {F =\sum _{\alpha \in \nn d}c_\alpha e_\alpha}
{\sup _{\alpha \in \nn d}\left (|c_\alpha |R^{-|\alpha |}\right )<\infty}
\label{Omega2Def}
\end{align}
to each others, where $D_{d,R}$ is the polydisc
$$
\sets{(z_1,\dots ,z_d)\in \cc d}{|z_j|\le R,\ j=1,\dots ,d} .
$$

\par

\begin{thm}\label{Thm:GrochFlatSpacIdent}
Let $\Omega _{j,R}$, $j=0,1,2$, be given by
\eqref{Omega0Def}--\eqref{Omega2Def} when $R>0$, and let
$R_0,R\in \mathbf R$ be such that $0<R_0<R$. Then
\begin{equation}\label{Omega12RRelations}
\Omega _{0,R}\subseteq \Omega _{2,R}
\quad \text{and}\quad
\Omega _{2,R_0}\subseteq \Omega _{1,R}.
\end{equation}
In particular,
$$
\maclS _C(\rr d)=\maclH _{\flat _1} (\rr d)
\quad \text{and}\quad
\maclH _{0,{\flat _1}}(\rr d)\subsetneq \maclS _C(\rr d).
$$
\end{thm}

\par

We need some preparations for the proof, and begin with the following lemma.

\par

\begin{lemma}\label{Lemma:HermiteGroch}
Let $\chi$ be the characteristic function of the polydisc
$$
\sets {z=(z_1,\dots ,z_d)\in \cc d}{|z_j|\le 1,\ j=1,\dots ,d},
$$
and let
$$
F_\alpha (z)= \sqrt {\alpha !}\left ( \prod _{j=1}^d (\alpha _j+1)\right )
z^\alpha e^{|z|^2}\chi (z),\quad z\in \cc d,\ \alpha \in \nn d .
$$
Then
\begin{equation}\label{Eq:PiAFalpha}
\Pi _AF_\alpha =e_\alpha
\end{equation}
\end{lemma}

\par

\begin{proof}
Since
$$
\Pi _A F_\alpha = (\Pi _AF_{\alpha _1})\otimes \cdots \otimes (\Pi _A F_{\alpha _d})
\quad \text{and}\quad
e_\alpha = e_{\alpha _1}\otimes \cdots \otimes e_{\alpha _d},
$$
when $\alpha =(\alpha _1,\dots ,\alpha _d)\in \nn d$, it suffices to prove the result for $d=1$.
Furthermore, since both sides in \eqref{Eq:PiAFalpha} are entire, it suffices to prove
\begin{equation}\tag*{(\ref{Eq:PiAFalpha})$'$}
\Pi _AF_\alpha (e^{i\fy })=e_\alpha (e^{it}) = \sqrt {e^{i\alpha \fy }}{\sqrt {\alpha !}},\quad
\fy \in \mathbf R,
\end{equation}
by analytic continuation.

\par

By taking polar coordinates when integrating we get
\begin{multline*}
\Pi _AF_\alpha (e^{i\fy }) = \pi ^{-1}\sqrt {\alpha !}(\alpha +1) \int _{|w|\le 1}
w^\alpha e^{e^{i\fy }\overline w}\, d\lambda (w)
\\[1ex]
= \sqrt {\alpha !}(\alpha +1) \int _0^1 r^{\alpha +1}I_\alpha (r,\fy )\, dr,
\end{multline*}
where
\begin{multline*}
I_\alpha (r,\fy ) = \pi ^{-1}\int _0^{2\pi}e^{i\alpha \theta}e^{re^{\fy -\theta }}\, d\theta
\\[1ex]
=\pi ^{-1}e^{i\alpha \fy}\int _0^{2\pi}e^{-i\alpha \theta}e^{re^{i\theta}}\, d\theta
\\[1ex]
= 2e^{i\alpha \fy}\left ( \frac 1{2\pi i}\int _\gamma \frac {e^{rz}}{z^{\alpha +1}}\, dz \right ),
\end{multline*}
where $\gamma$ is the unit circle in counterclockwise direction. Hence,
$$
\frac 1{2\pi i}\int _\gamma \frac {e^{rz}}{z^{\alpha +1}}\, dz
= \frac 1{\alpha !} \frac {d^\alpha }{dz^{\alpha}}(e^{rz})\Big \vert _{z=0}= \frac {r^\alpha}{\alpha !},
$$
by Cauchy's integral formula.

\par

This gives
$$
I_\alpha (r,\fy ) = \frac {2e^{i\alpha \fy}r^{\alpha}}{\alpha !}
$$
and
$$
\Pi _AF_\alpha (e^{i\fy }) = \frac {2(\alpha +1)e^{i\alpha \fy}}{\sqrt {\alpha !}}
\int _0^1r^{2\alpha +1}\, dr = \frac {e^{\alpha \fy}}{\sqrt {\alpha !}} =e_\alpha (e^{i\fy}),
$$
and the result follows.
\end{proof}

\par

\begin{proof}[Proof of Theorem \ref{Thm:GrochFlatSpacIdent}]
By Proposition \ref{calJChar}, the result follows if we prove
\eqref{Omega12RRelations}. Assume that $F\in \Omega _{0,R}$. Then
$$
|F(w)e^{(z,w)}|\lesssim e^{R|z|}.
$$
This gives
$$
|\mathfrak V_df(z)| \lesssim e^{R|z|}\int _{\cc d}\, d\mu (w)\asymp e^{R|z|},
$$
which implies $\Omega _{0,R}\subseteq \Omega _{2,R}$.

\par

It remains to prove $\Omega _{2,R_0}\subseteq \Omega _{1,R}$. By
$$
(\Pi _AF)(Rz) =(\Pi _A F_R)(z),
\quad \text{when}\quad
F_R(z) = R^{-2d}e^{(1-\frac 1{R^2})|w|^2}F\Big ( \frac zR \Big ),
$$
we may reduce ourself to the case when $R=1$ and $R_0<1$.

\par

Let $F_\alpha$ be as in the proof of Lemma \ref{Lemma:HermiteGroch}.
By the latter lemma and Weierstrass theorem it suffices to prove that
$$
\sum _{\alpha \in \nn d} |c_\alpha | \nm {F_\alpha}{L^\infty}<\infty ,
$$
since $\supp (F_\alpha )\subseteq D_{d,1}$.

\par

We have
$$
\sum _{\alpha \in \nn d} |c_\alpha | \nm {F_\alpha}{L^\infty}
\lesssim 
\sum _{\alpha \in \nn d} \left (\prod _{j=1}^d(\alpha _j+1)\right ) R_1^{|\alpha |}<\infty ,
$$
and the result follows.
\end{proof}

\par

\begin{example}
Let $F(z) =e^{|z|^2}\chi (z)$, where $\chi$ is the
characteristic function of the set $[0.1]^{2d}$, and let $f$
be chosen such that $\mathfrak V_df=\Pi _AF$. Then $f\in
\maclS _C(\rr d)$ since $F\in L^\infty (\rr d)\cap \mascE '(\rr d)$.

\par

On the other hand, by
straight-forward computations one obtains
$$
\mathfrak V_df(z)=\Pi _AF(z)
=
\pi ^{-d}\left ( \prod _{j=1}^d \frac {e^{z_j}-1}{z_j} \right ) 
\left ( \prod _{j=1}^d \frac {1-e^{-iz_j}}{iz_j} \right ).
$$
This implies that
$$
z\mapsto |\mathfrak V_df(z)|e^{-\ep |z|}
$$
is unbounded when $\ep >0$ is small enough. Consequently,
$f\notin \maclH _{0,{\flat _1}}(\rr d)$.
\end{example}

\par

\section{Characterizations of $\maclH _{s} (\rr d)$ and
$\maclH _{0,s}(\rr d)$, and their duals, for $s\in \mathbf R_+$}\label{sec4}

\par

In this section we show that $\bsySig _s $, $\bsycalS _{\! s}$ and their
duals are equal to $\maclH _{0,s}$, $\maclH _s$ and their duals, respectively.
We also describe their images under the Bargmann transform as convenient
spaces of entire functions.

\par

First we need certain invariance properties concerning the norm
condition \eqref{GFHarmCond}. More precisely,
the following result links the conditions
\begin{align}
\label{GFHarmCond1}
\sup _{N\ge 0} \frac {\nm{H^Nf}{L^{p_0}}}{h^N(N!)^{2s}} &<\infty ,
\intertext{and}
\label{GFHarmCond2}
\sup _{N\ge N_0} \frac {\nm {H^Nf}{M^{p,q}_{(\omega )}}}{h^N(N!)^{2s}} &<\infty .
\end{align}
to each others and shows in particular that the $L^\infty$ norm in
\eqref{GFHarmCond} can be replaced by other types of Lebesgue or modulation
space  quasi-norms.

\par

\begin{prop}\label{NormEquiv}
Let $p_0\in [1,\infty ]$, $p,q\in (0,\infty ]$, $N_0\ge 0$ be an integer, $s\ge 0$
and let $\omega \in \mascP (\rr {2d})$. Then the following conditions
are equivalent:
\begin{enumerate}
\item \eqref{GFHarmCond1} holds for some $h>0$ (for every $h>0$);

\vrum

\item  \eqref{GFHarmCond2} holds for some $h>0$ (for every $h>0$).
\end{enumerate}
\end{prop}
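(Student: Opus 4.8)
The plan is to exploit that the harmonic oscillator $H=|x|^2-\Delta$ is diagonalized by the Hermite basis, $Hh_\alpha=(2|\alpha|+d)h_\alpha$, so that $H$ and all its powers (positive and negative) are simultaneous Hermite multipliers. Since $H\ge d>0$, the operator $H^{-k}$ is well defined for every integer $k\ge0$ and acts on $f=\sum_\alpha c_\alpha(f)h_\alpha$ by $H^{-k}f=\sum_\alpha(2|\alpha|+d)^{-k}c_\alpha(f)h_\alpha$. Because $\omega\in\mascP(\rdd)$ is polynomially moderate, hence moderate, $M^{p,q}_{(\omega)}$ is a genuine modulation space whose defining quasi-norm is, up to equivalence, window independent. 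The whole argument then reduces to a single smoothing lemma together with an elementary bookkeeping with factorials.

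\emph{Smoothing lemma.} I would first prove that, given $p_0\in[1,\infty]$, $p,q\in(0,\infty]$ and $\omega\in\mascP(\rdd)$, there is an integer $k_0$ such that for every $k\ge k_0$ the operator $H^{-k}$ maps $L^{p_0}(\rd)$ continuously into $M^{p,q}_{(\omega)}(\rd)$, and $M^{p,q}_{(\omega)}(\rd)$ continuously into $L^{p_0}(\rd)$. The mechanism is that $H$ is elliptic in phase space with symbol comparable to $1+|x|^2+|\xi|^2$, so that $H^{-k}$ gains $2k$ powers of $\eabs{(x,\xi)}$ in the modulation scale, i.e. $H^{-k}\colon M^{p,q}_{(v)}\to M^{p,q}_{(v\,\eabs{\cdot}^{2k})}$ continuously for every $v\in\mascP(\rdd)$. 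Combining this with the standard continuous embeddings sandwiching an $L^{p_0}$ space between two modulation spaces with purely polynomial weights, $L^{p_0}(\rd)\hookrightarrow M^{p,q}_{(\eabs{\cdot}^{-M})}(\rd)$ and $M^{p,q}_{(\eabs{\cdot}^{M})}(\rd)\hookrightarrow L^{p_0}(\rd)$ for $M$ large, and with the two-sided polynomial bounds on $\omega$ coming from moderateness, one chooses $k$ so that $2k$ exceeds $M$ together with the polynomial order of $\omega$; both claimed mapping properties then follow at once.

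\emph{The $N$-shift.} With the lemma in hand, fix $k\ge\max(k_0,N_0)$ and write $H^Nf=H^{-k}\bigl(H^{N+k}f\bigr)$. For (1)$\Rightarrow$(2) I would estimate $\nm{H^Nf}{M^{p,q}_{(\omega)}}\lesssim\nm{H^{N+k}f}{L^{p_0}}\lesssim h^{N+k}((N+k)!)^{2s}$, and absorb the shift through $((N+k)!)^{2s}\le(N+k)^{2sk}(N!)^{2s}$ and the elementary bound $(N+k)^{2sk}\lesssim 2^N$, obtaining a bound of the form $h_1^N(N!)^{2s}$ valid for all $N\ge0$, in particular for $N\ge N_0$. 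In the version with ``for every $h$'' one applies (1) with $h$ replaced by $h/2$ and absorbs $(N+k)^{2sk}$ into $2^{N}$; in the version with ``for some $h$'' the emerging constant merely enlarges the admissible value of $h$. The implication (2)$\Rightarrow$(1) is entirely symmetric, now using $H^{-k}\colon M^{p,q}_{(\omega)}\to L^{p_0}$; here the choice $k\ge N_0$ guarantees $N+k\ge N_0$ for every $N\ge0$, so that the supremum over $N\ge N_0$ in \eqref{GFHarmCond2} controls $\nm{H^{N+k}f}{M^{p,q}_{(\omega)}}$ for all $N\ge0$ and hence yields the supremum over $N\ge0$ in \eqref{GFHarmCond1}.

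\emph{Main obstacle.} The only substantial point is the smoothing lemma, and within it the quantitative weight gain of $H^{-k}$ on the modulation scale together with its validity in the quasi-Banach range $p,q<1$. For Banach indices these are classical consequences of the Shubin calculus and of the known $L^p$--modulation embeddings. For $p,q<1$ one must instead argue through the Hermite-multiplier picture, checking that the sequence multiplier $c_\alpha\mapsto(2|\alpha|+d)^{-k}c_\alpha$ is bounded on the relevant quasi-Banach coefficient spaces, the polynomial decay $(2|\alpha|+d)^{-k}$ dominating the polynomial growth of the Hermite-function norms that appears when passing between $L^{p_0}$ and $M^{p,q}_{(\omega)}$. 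Everything else --- the factorial comparison and the quantifier bookkeeping for ``some $h$''/``every $h$'' --- is routine.
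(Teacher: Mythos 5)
Your proposal is correct in outline, and it uses the same two external ingredients as the paper --- the weight-gain mapping property of powers of $H$ on the modulation scale (\cite[Theorem 3.10]{SiTo}) and the embeddings $M^{p_0,q_1}\subseteq L^{p_0}\subseteq M^{p_0,q_2}$ from \cite{To8} --- but it organizes them differently. You build an explicit two-sided smoothing bridge: $H^{-k}$ maps $L^{p_0}$ into $M^{p,q}_{(\omega )}$ and back for $k$ large, and then you absorb the index shift $N\mapsto N+k$ by the factorial estimate $((N+k)!)^{2s}\le (N+k)^{2sk}(N!)^{2s}\lesssim 2^N(N!)^{2s}$. The paper never crosses between $L^{p_0}$ and the modulation scale with a smoothing operator; instead it first proves that condition \eqref{GFHarmCond2} is \emph{invariant} under changes of $N_0$, $\omega$, $p$ and $q$ --- using the bijectivity of $H^{N}\colon M^{p,q}_{(v_N\omega )}\to M^{p,q}_{(\omega )}$ together with the same binomial absorption $\binom{N+2N_0}{2N_0}^{2s}h^{-N}\lesssim h_1^{-N}$ --- and only then compares (1) and (2) directly through the $L^{p_0}$ sandwich, which needs no negative powers of $H$ at all. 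The one place where your argument is genuinely heavier is exactly the point you flag as the main obstacle: the validity of the smoothing lemma in the quasi-Banach range $p,q<1$. Your Hermite-multiplier verification can be made to work (the quasi-norm satisfies an $r$-triangle inequality with $r=\min (1,p,q)$, and both $\nm {h_\alpha}{L^{p_0}}$ and $\nm {h_\alpha}{M^{p,q}_{(\omega )}}$ grow at most polynomially in $|\alpha |$, so $(2|\alpha |+d)^{-k}$ wins for $k$ large), but it is unnecessary: the paper dispatches the quasi-Banach case with the sandwich $M^\infty _{(v_N\omega )}(\rd )\subseteq M^{p,q}_{(\omega )}(\rd )\subseteq M^\infty _{(\omega )}(\rd )$, $N$ large, which reduces everything to $p=q=\infty$; the same trick would let you run your smoothing lemma entirely inside $M^\infty$-spaces (do the weight gain there, then embed up or down), eliminating the only nonclassical step in your plan. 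A minor point you should still record: the identity $H^{-k}(H^{N+k}f)=H^Nf$ deserves a one-line justification via Hermite expansions, since $f$ is a priori only assumed to satisfy one of the two growth conditions.
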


\par

\begin{proof}
First we prove that \eqref{GFHarmCond2} is independent of $N_0\ge 0$
when $p,q\ge 1$. Evidently, if \eqref{GFHarmCond2} is true for $N_0=0$,
then it is true also for $N_0>0$. On the other hand, the map
\begin{equation}\label{HarmonicOscModMap}
H^N \, :\, M^{p,q}_{(v_N\omega )}(\rr d)\to M^{p,q}_{(\omega )}(\rr d),
\qquad v_N(x,\xi )=(1+|x|^2+|\xi |^2)^N,
\end{equation}
and its inverse are continuous and bijective (cf. e.{\,}g.
\cite[Theorem 3.10]{SiTo}). Hence, if $0\le N\le N_0$,
$N_1=N_0-N\ge 0$ and \eqref{GFHarmCond2} holds for some
$N_0\ge 0$, then
$$
\nm {H^Nf}{M^{p,q}_{(\omega )}} \lesssim \nm {H^{N_0}f}{M^{p,q}_{(\omega /v_{N_1})}}
\lesssim \nm {H^{N_0}f}{M^{p,q}_{(\omega )}}<\infty ,
$$
and \eqref{GFHarmCond2} holds for $N_0=0$. This implies that
\eqref{GFHarmCond2} is independent of $N_0\ge 0$ when $p,q\ge 1$.

\par

Next we prove that (2) is independent of the
choice of $\omega \in \mascP (\rr {2d})$.
For every $\omega _1,\omega _2\in \mascP (\rr {2d})$, we may find an integer
$N_0\ge 0$ such that
$$
\frac 1{v_{N_0}}\lesssim \omega _1, \omega _2\lesssim v_{N_0},
$$
and then
\begin{equation}\label{NormArrayCond}
\nm f{M^{p,q}_{(1/v_{N_0})}}\lesssim \nm f{M^{p,q}_{(\omega _1)}},
\nm f{M^{p,q}_{(\omega _2)}}
\lesssim \nm f{M^{p,q}_{(v_{N_0})}}.
\end{equation}
Hence the stated invariance follows if we prove that
\eqref{GFHarmCond2} holds for $\omega =v_{N_0}$, if it is true for
$\omega =1/v_{N_0}$.

\par

Therefore, assume that \eqref{GFHarmCond2} holds for $\omega =1/v_{N_0}$.
If $N\ge 2N_0$, then the bijectivity of \eqref{HarmonicOscModMap} gives
\begin{multline*}
\frac {\nm {H^Nf}{M^{p,q}_{(v_{N_0})}}}{h^N(N!)^{2s}}
\lesssim
\frac {\nm {H^{N+2N_0}f}{M^{p,q}_{(1/v_{N_0})}}}{h^N(N!)^{2s}}
\\[1ex]
=
h^{2N_0}{{N+2N_0}\choose {2N_0}}^{2s}((2N_0)!)^{2s}
\frac {\nm {H^{N+2N_0}f}{M^{p,q}_{(1/v_{N_0})}}}{h^{N+2N_0}((N+2N_0)!)^{2s}}
\\[1ex]
\asymp
{{N+2N_0}\choose {2N_0}}^{2s}
\frac {\nm {H^{N+2N_0}f}{M^{p,q}_{(1/v_{N_0})}}}{h^{N+2N_0}((N+2N_0)!)^{2s}}
\lesssim
\frac {\nm {H^{N+2N_0}f}{M^{p,q}_{(1/v_{N_0})}}}{h_1^{N+2N_0}((N+2N_0)!)^{2s}},
\end{multline*}
where $h_1=\frac h{4^s}$. This gives the invariance of
(2) with respect to $\omega$ in the case $p,q\ge 1$. For general
$p,q>0$, the invariance of \eqref{GFHarmCond2} with respect to $\omega$,
$p$ and $q$ is now a consequence of the embeddings
$$
M^\infty _{(v_N\omega )}(\rr d)\subseteq M^{p,q} _{(\omega )}(\rr d)
\subseteq M^\infty _{(\omega )}(\rr d),\qquad N>\frac dp.
$$

\par

The equivalence between (1) and (2) follows from
these invariance properties and the continuous
embeddings
$$
M^{p_0,q_1}\subseteq L^{p_0}\subseteq M^{p_0,q_2},
\qquad
q_1=\min (p_0,p_0'),\quad q_2=\max (p_0,p_0'),
$$
which can be found in e.{\,}g. \cite{To8}.
\end{proof}

\par

%

\par

The next three theorems characterize spaces in \eqref{clHSpaces}
in terms of Pilipovi{\'c} spaces and spaces of entire functions. Here we let
\begin{equation}\label{Eq:omegajrsDef}
\omega _{1,r,s}(z)
=
\begin{cases}
e^{r(\log \eabs z)^{\frac 1{1-2s}}}, & s<\frac 12
\\[1ex]
e^{\frac {|z|^2}2-r|z|^{\frac 1s}}, & s\ge \frac 12,
\end{cases}
\quad \text{and}\quad
\omega _{2,r,s}(z) = e^{\frac {|z|^2}2+r|z|^{\frac 1s}},
\end{equation}

\par

\begin{thm}\label{PilHsIdentities}
Let $s\ge 0$. Then
\begin{equation}\label{Eq:PilHsIdentities}
\begin{alignedat}{2}
\bsySig _s(\rr d) &= \maclH _{0,s}(\rr d),&\qquad
\bsycalS _{\! s}(\rr d) &= \maclH _{s}(\rr d),
\\[1ex]
\bsycalS _{\! s}'(\rr d) &= \maclH _{s}'(\rr d),&\qquad
\bsySig _s'(\rr d) &= \maclH _{0,s}'(\rr d).
\end{alignedat}
\end{equation}
\end{thm}

\par

\begin{thm}\label{PilSpacesChar}
Let $s\ge 0$ and let $\omega _{1,r,s}$ be as in \eqref{Eq:omegajrsDef}.
Then the following is true:
\begin{enumerate}
\item the map \eqref{Eq:FormalPowerToAnalFunc} extends uniquely
to a homeo\-morphism from $\bsycalA _{s} (\cc d)$ to
\begin{equation}\label{AsIdentities2}
\sets {F\in A(\cc d)}{|F(z)|\lesssim
\omega _{1,r,s} \ \text{for some}\ r >0}\text ;
\end{equation}

\vrum

\item if in addition $s\notin \{0,\frac 12\}$, then the map
\eqref{Eq:FormalPowerToAnalFunc} extends uniquely
to a homeo\-morphism from $\bsycalA _{0,s} (\cc d)$ to
\begin{equation}\label{AsIdentities1}
\sets {F\in A(\cc d)}{|F(z)|\lesssim
\omega _{1,r,s} \ \text{for every}\ r >0}.
\end{equation}
\end{enumerate}
\end{thm}

\par

\begin{thm}\label{PilSpacesCharDual}
Let $s\ge \frac 12$ and let $\omega _{1,r,s}$ be as in \eqref{Eq:omegajrsDef}.
Then the following is true:
\begin{enumerate}
\item the map \eqref{Eq:FormalPowerToAnalFunc} extends uniquely
to a homeo\-morphism from $\bsycalA _{s}' (\cc d)$ to
\begin{equation}\label{AsIdentities7}
\sets {F\in A(\cc d)}{|F(z)|\lesssim
\omega _{2,r,s} \ \text{for every}\ r >0}\text ;
\end{equation}

\vrum

\item if in addition $s\neq \frac 12$, then the map
\eqref{Eq:FormalPowerToAnalFunc} extends uniquely
to a homeo\-morphism from $\bsycalA _{0,s}' (\cc d)$ to
\begin{equation}\label{AsIdentities8}
\sets {F\in A(\cc d)}{|F(z)|\lesssim
\omega _{2,r,s} \ \text{for some}\ r >0}\text ;
\end{equation}
\end{enumerate}
\end{thm}

\par

Again we recall that Remark \ref{RemAnalSpaceTop} explains
the topologies of the spaces in \eqref{AsIdentities2}--\eqref{AsIdentities8}.

\par

\begin{rem}
Evidently, since the Bargmann transform is bijective between the spaces
in \eqref{clHSpaces} and \eqref{clASpaces}, Theorems \ref{flatSpacesChar},
\ref{flatSpacesCharDual}, \ref{PilSpacesChar},
and \ref{PilSpacesCharDual} remains true if the spaces
in \eqref{clASpaces} and the map \eqref{Eq:FormalPowerToAnalFunc}
are replaced by corresponding spaces in \eqref{clHSpaces} and the
Bargmann transform, respectively.
\end{rem}

\par

\par

\begin{rem}\label{Rem:LinkSTFT}
Because of the strong links between the Bargmann transform
and the short-time Fourier transform when the window
function is given by $\phi$ in \eqref{phidef}, Theorem
\ref{PilSpacesChar} implies that the conditions
on the right-hand sides of \eqref{AflatIdentities1}--\eqref{AflatIdentities5}
and \eqref{AsIdentities2}--\eqref{AsIdentities7}
carry over to analogous conditions on the short-time Fourier transforms.
For example, (1) in Theorem \ref{flatSpacesChar}
shows that $f\in \maclH _{0,\frac 12}(\rr d)$
is equivalent to
$$
|(V_\phi f)(x,\xi )|\lesssim e^{-\frac 14(1-\ep )(|x|^2+|\xi |^2)}\quad
\text{for every $\ep >0$.}
$$
In Section \ref{sec6} we present a more comprehensive list of relations between
elements in function spaces and estimates of their short-time Fourier transform
(see Proposition \ref{STFTPilSpChar}).
\end{rem}

\par

Theorem \ref{PilSpacesChar} in the case $0<s<\frac 12$ follows by suitable
applications of Theorem \ref{cA2sCharExt}, and is presented in \cite{FeGaTo2}.
Moreover, proofs of extended versions of Theorems \ref{PilSpacesChar}
and \ref{PilSpacesCharDual} in the case $s\ge \frac 12$ are given in
Section \ref{sec5}, using the facts that $\bsySig _s(\rr d) =\Sigma _s(\rr d)$
when $s>\frac 12$ and $\bsycalS _{\! s}(\rr d) = \maclS _s(\rr d)$ when
$s\ge \frac 12$.  (See Theorem \ref{BargGSMapProp} and its proof.)

\par

At this moment it therefore remains to prove Theorem \ref{PilHsIdentities}, and then
the following lemma takes care of the case $s>0$.

\par

\begin{lemma}\label{Equiv1And3}
Let $C,s,h>0$, $f\in \maclH _0'(\rr d)$, and let
$c_\alpha (f)$ be the same as in \eqref{fHermite}. Then the
following is true:
\begin{enumerate}
\item if
\begin{equation}\label{HNCond}
\nm {H^Nf}{L^2}\le Ch^N(N!)^{2s}
\end{equation}
for every integer
$N\ge 0$, then
\begin{equation}\label{calphaExpCond}
|c_\alpha (f) | \le 4^sCe^{-s|\alpha |^{\frac 1{2s}}/h^{\frac 1{2s}}}
\text ;
\end{equation}

\vrum

\item if 
\begin{equation}\label{calphaExpCond2}
|c_\alpha (f) | \le Ce^{-\frac 1h|\alpha |^{\frac 1{2s}}},
\end{equation}
then
\begin{equation}\label{HNCond2}
\nm {H^Nf}{L^2}\le C_1 (1+2sh^{2sd}\Gamma (2sd)) (3(4sh)^{2s})^N(N!)^{2s},
\end{equation}
for some constant $C_1$ which only depends on $C$ and $d$.
\end{enumerate}
\end{lemma}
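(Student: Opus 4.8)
The plan is to diagonalize the harmonic oscillator in the Hermite basis and thereby reduce both implications to a scalar Gevrey-type estimate. Since $Hh_\alpha=(2|\alpha|+d)h_\alpha$ and $\{h_\alpha\}_{\alpha\in\nn d}$ is orthonormal, the expansion \eqref{fHermite} gives $H^Nf=\sum_\alpha c_\alpha(f)(2|\alpha|+d)^Nh_\alpha$, whence by Parseval's identity
$$
\nm{H^Nf}{L^2}^2=\sum_{\alpha\in\nn d}|c_\alpha(f)|^2(2|\alpha|+d)^{2N}.
$$
Both parts then become statements relating the single-index decay of $\{c_\alpha(f)\}$ to this weighted $\ell^2$-sum, and the only genuine analytic input is a careful optimization over the integer $N$.

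For (1) I would retain a single term on the right: the identity above yields $|c_\alpha(f)|(2|\alpha|+d)^N\le\nm{H^Nf}{L^2}\le Ch^N(N!)^{2s}$ by \eqref{HNCond}, so $|c_\alpha(f)|\le Ch^N(N!)^{2s}(2|\alpha|+d)^{-N}$ for every integer $N\ge0$. I would then minimize the right-hand side over $N$. The continuous minimizer is $N^\ast\asymp((2|\alpha|+d)/h)^{1/(2s)}$, and Stirling's formula ($\log N!\approx N\log N-N$) gives the value $\exp(-2s((2|\alpha|+d)/h)^{1/(2s)})$. Since $2|\alpha|+d\ge2|\alpha|$ and $2s\cdot2^{1/(2s)}\ge s$, this already dominates the target decay $e^{-s|\alpha|^{1/(2s)}/h^{1/(2s)}}$ in \eqref{calphaExpCond}; the slack absorbs both the rounding of $N^\ast$ to a nearby integer and the Stirling constants, which together produce the factor $4^s$.

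For (2) I would instead insert $|c_\alpha(f)|^2\le C^2e^{-2|\alpha|^{1/(2s)}/h}$ from \eqref{calphaExpCond2} into the Parseval identity, split $e^{-2|\alpha|^{1/(2s)}/h}=e^{-|\alpha|^{1/(2s)}/h}\cdot e^{-|\alpha|^{1/(2s)}/h}$, and factor the bound as
$$
\nm{H^Nf}{L^2}^2\le C^2\Big(\sup_\alpha(2|\alpha|+d)^{2N}e^{-|\alpha|^{1/(2s)}/h}\Big)\Big(\sum_\alpha e^{-|\alpha|^{1/(2s)}/h}\Big).
$$
The supremum is a one-variable problem in $u=|\alpha|$, maximized where $u^{1/(2s)}\asymp4Nsh$; evaluating there and invoking Stirling bounds it by $(3(4sh)^{2s})^{2N}(N!)^{4s}$, the constant $3$ being chosen to swallow the Stirling factors. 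For the remaining sum I would group $\alpha$ by $|\alpha|=k$, with multiplicity $\lesssim k^{d-1}$, and compare with the integral $\int_0^\infty r^{d-1}e^{-r^{1/(2s)}/h}\,dr=2sh^{2sd}\Gamma(2sd)$, via the substitution $t=r^{1/(2s)}/h$; the constant $C_1(1+2sh^{2sd}\Gamma(2sd))$ in \eqref{HNCond2} then accounts for this sum together with the low-order ($k$ small) terms. Taking square roots yields \eqref{HNCond2}.

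The main obstacle in both directions is the explicit constant bookkeeping in the integer optimization: landing precisely on the factor $4^s$ in (1) and on the base $3(4sh)^{2s}$ in (2) requires controlling the Stirling corrections, rather than merely identifying the leading exponential rate, which is otherwise routine.
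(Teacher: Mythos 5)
Your part (2) is essentially the paper's own argument: Parseval's identity, the splitting of $e^{-2|\alpha|^{1/(2s)}/h}$ into two equal factors, a supremum--times--sum factorization, the supremum maximized at $u^{1/(2s)}=4shN$ and bounded via $N^Ne^{-N}\le N!$, and the comparison of $\sum_k k^{d-1}e^{-k^{1/(2s)}/h}$ with $2sh^{2sd}\Gamma (2sd)$ by the substitution you indicate. The only cosmetic difference is your attribution of the factor $3$ to Stirling corrections; in the paper it comes from $2|\alpha |+d\le 3|\alpha |$ (the factor $9^N$ pulled out of the sum), while the Stirling correction $(2\pi N)^{-s}$ actually points in the favorable direction and is simply discarded.

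Part (1) is where you take a genuinely different route, and where there is a genuine gap. Like the paper, you start from the term-wise bound $|c_\alpha (f)|\le Ch^N(N!)^{2s}(2|\alpha |+d)^{-N}$, but you then minimize over integer $N$ using Stirling asymptotics and assert that the slack in the exponential rate absorbs the rounding and Stirling corrections and ``produces the factor $4^s$''. That assertion is precisely what is not proved, and in the form stated it fails: the slack $e^{-(2\cdot 2^{1/(2s)}-1)s(|\alpha |/h)^{1/(2s)}}$ is only effective when $(|\alpha |/h)^{1/(2s)}$ is large, whereas the losses from rounding ($e^{2s/N^*}$, up to $e^{2s}$) and from the Stirling prefactor ($(2\pi N^*)^{s}$) are present for all $\alpha$. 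In the intermediate range where $N^*=((2|\alpha |+d)/h)^{1/(2s)}$ is of the order of a few units these losses exceed the available gain (for instance at $N^*\approx 2$ and large $s$ the loss behaves like $34^s$ against a gain of about $29.6^s$), so the inequality with the specific constant $4^s$ does not follow from your argument; closing it would force you to check small values of $N$ by hand in that range. The paper avoids all of this with one device your sketch misses: take the $1/(2s)$-th power \emph{first}, so that $(N!)^{2s}$ becomes $N!$, rewrite the bound as
\begin{equation*}
|c_\alpha (f)|^{1/(2s)}\,
\frac {\bigl (|\alpha |^{1/(2s)}/(2h^{1/(2s)})\bigr )^N}{N!}
\le C^{1/(2s)}\, 2^{-N},
\end{equation*}
and sum over all $N\ge 0$: the left-hand side sums to $|c_\alpha (f)|^{1/(2s)}e^{|\alpha |^{1/(2s)}/(2h^{1/(2s)})}$, the right-hand side to $2C^{1/(2s)}$, and raising to the power $2s$ gives \eqref{calphaExpCond} with exactly the constant $4^s$ --- no Stirling, no rounding, no case analysis. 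Summing the series over $N$ is the rigorous form of your optimization (a series is comparable to its largest term), but unlike the optimization it delivers the exact constant uniformly in $\alpha$.
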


\par

\begin{proof}
Assume that \eqref{HNCond} holds for every $N\ge 0$.
Since $H^Nh_\alpha =(2|\alpha |+d)^Nh_\alpha$ and
$\{ h_\alpha \} _{\alpha \in \nn d}$ is an
orthonormal basis for $L^2$, we get
\begin{equation*}
(2|\alpha|+d)^N | c_\alpha (f) | \le \nm {H^Nf}{L^2}\le Ch^N(N!)^{2s}.
\end{equation*}
Hence,
$$
|c_\alpha (f) |\le \frac {Ch^N(N!)^{2s}}{(2|\alpha|+d)^N},
$$
which implies
\begin{equation}\label{EqEstimate1}
|c_0(f)| \le Ch^N(N!)^{2s}
\quad \text{and}\quad
|c_\alpha (f) | \le \displaystyle{\frac {Ch^N(N!)^{2s}}{|\alpha | ^N}},
\quad |\alpha |\ge 1.
\end{equation}
In particular, \eqref{calphaExpCond} holds when $\alpha =0$.

\par

We may therefore assume that $|\alpha |\ge 1$. By \eqref{EqEstimate1}
we get
$$
|c_\alpha (f) |^{\frac 1{2s}}\cdot \frac {\displaystyle{ 
\left ( \frac {|\alpha |^{\frac 1{2s}}}{2h^{\frac 1{2s}}} \right )^N }} 
{N!} \le C^{\frac 1{2s}}\frac 1{2^N},
$$
and by summing over all $N\ge 0$ it follows that
$$
|c_\alpha (f) |^{\frac 1{2s}}e^{|\alpha |^{\frac 1{2s}} /(2h^{\frac 1{2s}})}\le 2C^{\frac 1{2s}},
$$
which is the same as \eqref{calphaExpCond}. This gives (1).

\par

Next assume that \eqref{calphaExpCond2} holds. Then
Parseval's formula gives 
\begin{multline*}
\nm {H^Nf}{L^2}\lesssim \left ( \sum _\alpha (2|\alpha |+d)^{2N}
e^{-\frac 2h|\alpha |^{\frac 1{2s}} }    \right )^{\frac 12}
\\[1ex]
\lesssim 1+\left ( \sum _{|\alpha |\ge 1} 9^N|\alpha |^{2N}
e^{-\frac 2h |\alpha |^{\frac 1{2s}}}    \right )^{\frac 12}
\lesssim
1+ 3^NR\sup _{t\ge 1}g(t),
\end{multline*}
where
$$
R= \sum _{k=1}^\infty k^{d-1}e^{-k^{\frac 1{2s}}/h}
\quad \text{and}\quad
g(t) = t^Ne^{-t^{\frac 1{2s}}/(2h) }.
$$

\par

We need to estimate $R$ and the last supremum. We have
\begin{multline*}
R^2 \lesssim 1+\int _1^\infty r^{d-1}e^{-r^{\frac 1{2s}}/h}\, dr = 1+2sh^{2sd}\int _{1/(2h)}^\infty
e^{-r}r^{2sd-1}\, dr
\\[1ex]
\le 1+2sh^{2sd}\Gamma (2sd).
\end{multline*}

\par

Furthermore, by
straight-forward computations, it follows that
$g(t)$ for $t\ge 1$ attains its maximum when $t=(4shN)^{2s}$.
Hence, by Stirling's formula we get
\begin{multline*}
\sup _{t\ge 1}g(t) = g((4shN)^{2s}) = ((4sh)^{2s})^N\left ( \frac {N^N}{e^N} \right )^{2s}
\\[1ex]
\lesssim
((4sh)^{2s})^N(2\pi N)^{-s}(N!)^{2s}
\lesssim
((4sh)^{2s})^N(N!)^{2s}.
\end{multline*}
The result now follows by combining these estimates.
\end{proof}

\par

\begin{proof}[Proof of Theorem \ref{PilHsIdentities}]
It remains to prove \eqref{Eq:PilHsIdentities} in the case $s=0$.

\par

Assume that $f\in \bsycalS _{\! 0}(\rr d)$. By Proposition \ref{NormEquiv} it follows that
$\nm {H^Nf}{L^2}\le Ch^N$ for some $h>0$ and $C>0$ which are independent
of $N\ge 0$. If $c_\alpha (f)$ is the same as in \eqref{fHermite}, then 
$$
\sum _{\alpha \in \nn d} |(2|\alpha |+d)^Nc_\alpha (f)|^2 = \nm {H^Nf}{L^2}^2\le C^2h^{2N},
$$
giving that
$$
|c_\alpha (f)| \le Ch^N|(2|\alpha |+d)^{-N}.
$$

\par

If $\alpha$ is chosen such that $h<2|\alpha |+d$, then it follows by
letting $N$ tends to infinity at the last estimate that $c_\alpha (f)=0$.
This shows that $f\in \maclH _0(\rr d)$ and hence $\bsycalS _{\! 0}(\rr d)
\subseteq \maclH _0(\rr d)$.

\par

On the other hand, for any Hermite function $h_\alpha$ we have
$$
\nm {H^Nh_\alpha }{L^2}= (2|\alpha |+d)^N\nm {h_\alpha}{L^2}\le h^N
$$
when $2|\alpha |+d\le h$. Hence any Hermite function belongs to
$\bsycalS _{\! 0}(\rr d)$, and it follows that $\maclH _0(\rr d)
\subseteq \bsycalS _{\! 0}(\rr d)$. This gives \eqref{Eq:PilHsIdentities}.
%
%
\end{proof}

\par

By a straight-forward combination of the \eqref{PilHsIdentities},
Theorems \ref{calJChar}, \ref{calJ0Char}, \ref{PilSpacesChar} and
\ref{PilSpacesCharDual}, and general embedding relations for
Gelfand-Shilov spaces, we get the following result.

\par

\begin{thm}
Let $s_1,s_2\in \mathbf R_\flat $ be such that $s_1<s_2$. Then the embeddings
in \eqref{InclSpaces} and \eqref{InclSpaces2}
hold true, and are continuous and dense. Furthermore, the embeddings
$$
\bsySig _{s_2}'(\rr d) \subseteq \bsycalS _{\! s_1}'(\rr d)\subseteq 
\bsySig _{s_1}'(\rr d) \subseteq \bsycalS _{\! 0}'(\rr d)
$$
hold true and are continuous.
\end{thm}

\par

\subsection{Some consequences}\label{subsec4.1}

\par

Next we show some consequences of the previous results.
As an immediate consequence of Theorem \ref{PilSpacesChar} we have
\begin{equation}\label{Eq:RelPilSpH2theta}
\begin{gathered}
\bsySig _s(\rr d) = \bigcap _{r>0}\maclH _{[\vartheta _r]}^2(\rr d)
\quad \text{and}\quad
\bsycalS _{\! s}(\rr d) = \bigcup _{r>0}\maclH _{[\vartheta _r]}^2(\rr d),
\\[1ex]
\text{when}
\quad
\vartheta _r(\alpha ) = e^{r|\alpha |^{\frac 1{2s}}},
\quad s>0
\quad \text{and}\quad
p\in (0,\infty ]
\end{gathered}
\end{equation}
(see also \cite{Pil1,Pil2}).

\par

We may also apply Theorem \ref{PilSpacesChar} and its proof to characterize
$\bsySig (\rr d)$ and $\bsySig '(\rr d)$ in terms of modulation spaces. In fact, by
Proposition \ref{cA2sChar} and Theorem \ref{PilSpacesChar} it follows that
$\bsySig (\rr d)$ and $\bsySig '(\rr d)$ are the projective and inductive limit,
respectively, of
$$
M^2_{(\omega _h )}(\rr d),\quad \omega _h (x,\xi )
= e^{\frac 14(1-2h )(|x|^2 + |\xi |^2)}
$$
with respect to $h >0$. In particular,
\begin{equation}\label{PilModEmb}
\begin{gathered}
\bsySig (\rr d) = \bigcap _{h >0}M^2_{(\omega _h )}(\rr d)
\quad \text{and}\quad
\bsySig '(\rr d) = \bigcup _{h >0}M^2_{(\omega _h )}(\rr d),
\\[1ex]
\omega _h (x,\xi ) = e^{\frac 14(1-2h)(|x|^2 + |\xi |^2)}.
\end{gathered}
\end{equation}

\par

Let $\mascB$ be an invariant quasi-norm space and let
$\sigma _r(x,\xi )=\eabs x^r\eabs \xi ^r$. Since
$$
\Omega =\sets {\omega _\ep \cdot \sigma _r}{r\in \mathbf R}
$$
is an admissible family of weights,
a combination of \cite[Theorem 3.2]{To11}, Theorem \ref{ABComplete} and
\eqref{PilModEmb} shows that \eqref{PilModEmb} can be extended into
\begin{equation}\tag*{(\ref{PilModEmb})$'$}
\begin{gathered}
\bsySig (\rr d) = \bigcap _{h >0}M(\omega _h ,\mascB )
\quad \text{and}\quad
\bsySig '(\rr d) = \bigcup _{h >0}M(\omega _h ,\mascB ),
\\[1ex]
\omega _h (x,\xi ) = e^{\frac 14(1- 2h)(|x|^2 + |\xi |^2)}.
\end{gathered}
\end{equation}
Furthermore, since for any weight $\omega \in \mascP _Q(\rr {2d})$ we have
$\omega _h \lesssim \omega$, for some $\ep >0$, \eqref{PilModEmb}$'$
gives
$$
\bsySig '(\rr d) = \bigcup _{\omega \in \mascP _Q(\rr {2d})}M(\omega ,\mascB ).
$$

\par

An other consequence of Theorem \ref{PilSpacesChar} is the following.

\par

\begin{prop}
Let $F(z)=e^{\gamma z^2}$, where $\gamma \in \mathbf C$. Then
$F=\mathfrak V_df$ for some $f\in \bsySig '(\rr d)$.
\end{prop}

\par


%
%
%
%
%
%
%

\par

\section{Mapping properties of Gelfand-Shilov spaces
and their distribution spaces, under the
Bargmann transform}\label{sec5}

\par

In this section we discuss the image of the Bargmann transform on
Gelfand-Shilov spaces and their distribution spaces.
We also use the results to show that the Gelfand-Shilov
spaces of functions or distributions can be obtained by appropriate
unions or intersections of certain modulation spaces, introduced in
\cite{To11}.

\par

With inspiration of the links between the spaces in \eqref{clHSpaces},
\eqref{clASpaces}, the Pilipovi{\'c} spaces and Gelfand-Shilov spaces,  we
make the following definition.

\par

\begin{defn}\label{DefGSBargmannLimits}
Let $s_1,t_1\ge \frac 12$ and $s_2,t_2> \frac 12$, and let
\begin{equation}\label{MstDef}
\omega _{r,s,t}(z) = e^{\frac {|z|^2}2+r(|x|^{\frac 1t}+|\xi |^{\frac 1s})}, \quad
z=x+i\xi ,\ x,\xi \in \rr d,
\end{equation}
when $s,t\ge \frac 12$. Then
$\maclA _{t_1}^{s_1}(\cc d)$, $\maclA _{0,t_2}^{s_2}(\cc d)$,
$(\maclA _{0,t_2}^{s_2})'(\cc d)$ and $(\maclA _{t_1}^{s_1})'(\cc d)$
are given by
\begin{equation}\label{AtsChar}
\begin{aligned}
\maclA _{t_1}^{s_1}(\cc d) &= \sets {F\in A(\cc d)}{|F(z)|\lesssim
\omega _{r,s_1,t_1}(z) \ \text{for some}\ r<0}
\\[1ex]
\maclA _{0,t_2}^{s_2}(\cc d) &= \sets {F\in A(\cc d)}{|F(z)|\lesssim
\omega _{r,s_2,t_2}(z)\ \text{for every}\ r<0}
\\[1ex]
(\maclA _{0,t_2}^{s_2})'(\cc d) &= \sets {F\in A(\cc d)}{|F(z)|\lesssim
\omega _{r,s_2,t_2}(z) \ \text{for some}\ r >0}
\\[1ex]
(\maclA _{t_1}^{s_1})'(\cc d) &= \sets {F\in A(\cc d)}{|F(z)|\lesssim
\omega _{r,s_1,t_1}(z) \ \text{for every}\ r >0},
\end{aligned}
\end{equation}
\end{defn}

\par

We also set $\maclA _s=\maclA _s^s$ and $\maclA _{0,s}=\maclA _{0,s}^s$,
and remark that $\maclA _s = \bsycalA _s$ when $s\ge \frac 12$ and
$\maclA _{0,s} = \bsycalA _{0,s}$ when $s>\frac 12$,
since $\bsycalS _{\! s}(\rr d)=\maclS _s(\rr d)$ when $s\ge \frac 12$ and
$\bsySig _s(\rr d)=\Sigma _s(\rr d)$ when $s>\frac 12$.

\par

We note that if $p\in (0,\infty ]$, then
\begin{equation}\tag*{(\ref{AtsChar})$'$}
\begin{alignedat}{2}
\maclA _{t_1}^{s_1}(\cc d) &= \bigcup _{r>0} A^p_{(\vartheta _{r,s_1,t_1})}(\cc d),
& \quad
\maclA _{0,t_2}^{s_2}(\cc d) &= \bigcap _{r>0} A^p_{(\vartheta _{r,s_2,t_2})}(\cc d),
\\[1ex]
(\maclA _{0,t_2}^{s_2})'(\cc d) &= \bigcup _{r>0} A^p_{(1/\vartheta _{r,s_2,t_2})}(\cc d)
& \quad \text{and}\quad
(\maclA _{t_1}^{s_1})'(\cc d) &= \bigcap _{r>0} A^p_{(1/\vartheta _{r,s_1,t_1})}(\cc d),
\end{alignedat}
\end{equation}
in view of \cite[Theorem 3.2]{To11}. Here $\vartheta _{r,s,t}$ is given by
\eqref{varthetarstDef}.
We let the topologies of $\maclA _{t}^{s}(\cc d)$ and $\maclA _{0,t}^{s}(\cc d)$
be the inductive and projective limit topology, respectively, of
$A^p_{(\vartheta _{r,s,t})}(\cc d)$, $r>0$, and the topologies of
$(\maclA _{0,t}^{s})'(\cc d)$ and $(\maclA _{t}^{s})'(\cc d)$
be the inductive and projective limit topology, respectively, 
of $A^p_{(1/\vartheta _{r,s,t})}(\cc d)$, $r>0$.

\par

\begin{thm}\label{BargGSMapProp}
Let $s_1,t_1\ge \frac 12$, $s_2,t_2> \frac 12$, and let $\omega _{r,s,t}$ be
as in \eqref{MstDef}. Then the Bargmann transform from $\maclH _0(\rr d)$
to $A'_{1/2}(\cc d)$ extends uniquely to homeomorphisms from
\begin{alignat}{5}
&\maclS _{t_1}^{s_1}(\rr d),& \quad &\Sigma _{t_2}^{s_2}(\rr d),&
\quad &(\Sigma _{t_2}^{s_2})'(\rr d)& \quad &\text{and} &
\quad &(\maclS _{t_1}^{s_1})'(\rr d)\label{Eq:ThmSigmaSSpaces}
\intertext{to}
&\maclA _{t_1}^{s_1}(\rr d), & \quad &\maclA _{0,t_2}^{s_2}(\rr d), &
\quad & (\maclA _{0,t_2}^{s_2})'(\rr d) & \quad
&\text{and} & \quad &(\maclA _{t_1}^{s_1})'(\rr d),\label{Eq:ThmAnalSpaces}
\end{alignat}
respectively.
\end{thm}

\par

\begin{proof}
Let $\phi$ be given by \eqref{phidef}. By Propositions \ref{stftGelfand2}
and \ref{stftGelfand2dist}, and \eqref{bargstft1} it follows that the Bargmann
transform maps the spaces in \eqref{Eq:ThmSigmaSSpaces}
into corresponding spaces in \eqref{Eq:ThmAnalSpaces}. Furthermore,
these mappings are homeomorphisms from the spaces in
\eqref{Eq:ThmSigmaSSpaces} to corresponding images
under the relative topologies induced from the spaces in
\eqref{Eq:ThmAnalSpaces}. We need to prove that
these images agree with the spaces in \eqref{Eq:ThmAnalSpaces}.

\par

Therefore, let $F\in (\maclA _{t_1}^{s_1})'(\cc d)$. Since
$(\maclA _{t_1}^{s_1})'(\cc d)\subseteq \maclA _{1/2}'(\cc d)$, Theorems
\ref{PilHsIdentities} and \ref{PilSpacesCharDual}, and \eqref{Eq:PilGSidentities}
shows that $F=\mathfrak V_df$ for some $f\in \maclS _{1/2}'(\rr d)$,
and since $\phi \in \maclS _t^s(\rr d)$ for every $s,t\ge \frac 12$,
\eqref{bargstft2} shows that \eqref{stftexpest2} holds with $s_1$ and
$t_1$ in place of $s$ and $t$, respectively, for every $r>0$. Hence,
$f\in (\maclS _{t_1}^{s_1})'(\rr d)$, by Proposition \ref{stftGelfand2dist}.

\par

In the same way, it is proved that the images of the other spaces in
\eqref{Eq:ThmSigmaSSpaces} agree with corresponding spaces in
\eqref{Eq:ThmAnalSpaces}, and the result follows.
\end{proof}

\par

\begin{rem}
We remark here that already in \cite{B2}, complete characterizations of the
$\mascS (\rr d)$ and $\mascS '(\rr d)$ in terms of the images under the Bargmann
transform, are obtained. In fact, it is here proved that
$\mathfrak V _d$ is bijective from $\mascS (\rr d)$ to
\begin{align*}
\maclA _\infty (\cc d) &= \sets {F\in A(\cc d)}{|F(z)|\lesssim e^{\frac {|z|^2}2}
\eabs z^{-N}\ \text{for every}\ N\ge 0},
\intertext{and from $\mascS '(\rr d)$ to}
\maclA _\infty (\cc d) &= \sets {F\in A(\cc d)}{|F(z)|\lesssim e^{\frac {|z|^2}2}
\eabs z^{N}\ \text{for some}\ N\ge 0}.
\end{align*}
\end{rem}

\par

\begin{rem}\label{extensionRemTo11}
Especially the cases when $s=s_1=\frac 12$ or $t=t_1=\frac 12$ in Theorem
\ref{BargGSMapProp} seem to
be new, and are often not taken into account in e.{\,}g. \cite{To11}.
In fact, in \cite{To11} it is usually assumed that the involved weights
should belong to $\mascP _G^0$ or the larger class
$\mascP _Q^0$.

\par

By relaxing the condition
$$
e^{-r |x|^2}\lesssim \omega (x)\lesssim e^{r |x|^2},\quad
\text{for every $r >0$},
$$
into
\begin{equation}\label{PQcond}
e^{-r |x|^2} \lesssim \omega (x)\lesssim e^{r_0|x|^2},
\quad \text{for every $r >0$ and some $r_0>0$},
\end{equation}
in the definitions of $\mascP _Q^0(\rr d)$ and its subclasses,
it follows that the results in \cite{To11}, except Theorem
4.7 and Lemma 4.11, still hold in these more general
situations.
\end{rem}

\par

We finish the section by 
the following extension of Theorem 3.9 in \cite{To11}, and which follows
from Theorems 3.2 and 3.4 in \cite{To11}, and Theorem Proposition \ref{stftGelfand2dist}. Here
recall for $\phi$ as in \eqref{phidef}, $\omega$ a weight on $\rr {2d}$
and $\mascB $ a mixed quasi-norm space on $\rr {2d}$,
$M (\omega ,\mascB )$ is the set of all $f\in \maclS _{1/2}'(\rr d)$
such that $\nm f{M(\omega ,\mascB )} \equiv \nm {V_\phi f\cdot \omega}{\mascB}$
is finite.

\par

\begin{prop}\label{GSandMod}
Let $\mascB $ be a mixed quasi-norm space on $\rr {2d}$, and
set
$$
\omega _r(x,\xi )\equiv e^{r(|x|^{\frac 1t}+|\xi |^{\frac 1s})},\quad r\in \mathbf R.
$$
Then the following is true:
\begin{enumerate}
\item if $s,t\ge \frac 12$ then 
$$
\bigcup _{r>0}M (\omega _r,\mascB ) = \maclS _t^s(\rr d)
\quad \text{and}\quad
\bigcap _{r<0}M (\omega _r,\mascB ) = (\maclS _t^s)'(\rr d)
\text ;
$$

\vrum

\item if $s,t> \frac 12$, then 
$$
\bigcap _{r>0}M (\omega _r,\mascB ) = \Sigma _t^s(\rr d)
\quad \text{and}\quad
\bigcup _{r<0}M (\omega _r,\mascB ) = (\Sigma _t^s)'(\rr d).
$$
\end{enumerate}
\end{prop}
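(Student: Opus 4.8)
The plan is to reduce everything to the case $\mascB=L^\infty$ and then read off the four identities directly from the short-time Fourier characterizations of Gelfand--Shilov spaces already available in the text. Writing $z=x+\im \xi$, one has $\omega _r(x,\xi )=e^{rM_{s,t}(z)}$ with $M_{s,t}$ as in \eqref{MstDef}, so that $f\in M^\infty _{(\omega _r)}(\rr d)$ means precisely $|V_\phi f(x,\xi )|\lesssim e^{-rM_{s,t}(x,\xi )}$, the Gaussian $\phi$ of \eqref{phidef} being the window. Since $\phi \in \maclS _{1/2}^{1/2}(\rr d)\setminus 0$, it is an admissible window for $\maclS _{t}^s$ with $s,t\ge 1/2$ (take $s_0=t_0=1/2$ in Proposition \ref{stftGelfand2}), and it lies in $\Sigma _{t_0}^{s_0}(\rr d)$ as soon as $s_0$ or $t_0$ exceeds $1/2$, which is available when $s,t>1/2$ (Proposition \ref{stftGelfand2dist}). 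Hence, for $\mascB=L^\infty$, Propositions \ref{stftGelfand2} and \ref{stftGelfand2dist} give at once $\bigcup _{r>0}M^\infty _{(\omega _r)}=\maclS _t^s$, $\bigcap _{r>0}M^\infty _{(\omega _r)}=\Sigma _t^s$, $\bigcap _{r<0}M^\infty _{(\omega _r)}=(\maclS _t^s)'$ and $\bigcup _{r<0}M^\infty _{(\omega _r)}=(\Sigma _t^s)'$, where for the distribution identity in part (1) when $s=1/2$ or $t=1/2$ one invokes Proposition \ref{stftGelfand3dist} instead.

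There is a subtlety to clear up here: the propositions presuppose that $f$ already belongs to a Gelfand--Shilov distribution space, whereas $M(\omega _r,\mascB )$ is defined inside the larger space $\bsySig '(\rr d)$. Only the inclusions ``$\subseteq$'' need this, and I would settle them through the Bargmann transform. Passing to $F=\mathfrak V_df$ and using the dictionary \eqref{bargstft1}--\eqref{bargstft2}, the estimate on $V_\phi f$ is equivalent, after absorbing the fixed positive dilation constants of \eqref{bargstft1} into $r$ (harmless under the union or intersection over $r$), to the bound defining membership of $F$ in $\maclA _t^s(\cc d)$, $\maclA _{0,t}^s(\cc d)$, $(\maclA _t^s)'(\cc d)$ or $(\maclA _{0,t}^s)'(\cc d)$. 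By Theorem \ref{BargMapProp} these analytic spaces are exactly the Bargmann images of $\maclS _t^s$, $\Sigma _t^s$ and their duals, and since $\mathfrak V_d$ is injective on $\bsySig '(\rr d)$, the preimage $f$ lies in the asserted space. This fixes the ambient space issue once and for all.

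It remains to remove the restriction $\mascB=L^\infty$, and here I would argue exactly as in the proof of Proposition \ref{AnSpacesEmb}. For each fixed $r$ the family $\{\omega _r\sigma _N\}_{N\in \mathbf Z}$, with $\sigma _N(x,\xi )=\eabs {x,\xi }^N$, is an admissible family of weights in the sense of Definition \ref{defweights}, so Theorems 3.2 and 3.4 in \cite{To11} sandwich $M(\omega _r,\mascB )$ between $M^\infty _{(\omega _r\sigma _N)}$ and $M^\infty _{(\omega _r\sigma _{-N})}$ for $N$ large. Since $M_{s,t}$ grows like a positive power of $|(x,\xi )|$, one has $\omega _{r''}\lesssim \omega _r\sigma _{-N}\lesssim \omega _r\sigma _N\lesssim \omega _{r'}$ whenever $r''<r<r'$ of the same sign, so the exponential parameter dominates the polynomial factors. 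Taking the union over $r>0$ (respectively the intersection over $r>0$, and the corresponding operations over $r<0$) the polynomial corrections wash out and $\bigcup _rM(\omega _r,\mascB )=\bigcup _rM^\infty _{(\omega _r)}$, and likewise for the other three combinations. Combined with the previous two paragraphs this yields all four identities for arbitrary $\mascB$.

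The main obstacle is the boundary regime $s=1/2$ or $t=1/2$ occurring in part (1). There $\omega _r$ has genuine Gaussian growth (for $t=1/2$ one has $\omega _r\asymp e^{r|x|^2+\cdots }$), so $\omega _r\notin \mascP _Q^0(\rr {2d})$ and the results of \cite{To11} do not apply in their original form. I would handle this through the relaxation \eqref{PQcond} described in Remark \ref{extensionRemTo11}, under which Theorems 3.2 and 3.4 of \cite{To11} remain valid, together with Proposition \ref{stftGelfand3dist} for the distribution characterization at the boundary. Verifying that the admissibility and independence machinery indeed survives this relaxation, and that the sandwiching of the previous paragraph stays within the permitted weight class, is the delicate point of the argument.
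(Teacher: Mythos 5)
Your overall architecture --- reduce to $\mascB =L^\infty$ via the weight-family machinery of \cite{To11}, identify $\bigcup _r$ and $\bigcap _r$ of the spaces $M^\infty _{(\omega _r)}$ through the short-time Fourier characterizations, and settle the ambient-space issue by passing to the Bargmann side and invoking Theorem \ref{BargMapProp} --- is essentially the paper's own route, whose proof is the one-line citation of Theorems 3.2 and 3.4 in \cite{To11} together with Theorem \ref{BargMapProp}; part (2) and the test-function half of part (1) go through as you describe. The genuine gap sits exactly at the point you flag as delicate, and the tool you propose does not repair it. For the identity $\bigcap _{r<0}M(\omega _r,\mascB )=(\maclS _t^s)'(\rr d)$ with $s=1/2$ or $t=1/2$, \emph{every} weight $\omega _r$ with $r<0$ has genuine Gaussian decay (for $t=1/2$ one has $\omega _r(x,\xi )=e^{-|r|(|x|^2+|\xi |^{1/s})}$), and such a weight violates the lower bound in \eqref{PQcond}: taking $\xi =0$ and $0<\rho <|r|$, the required estimate $e^{-\rho |x|^2}\lesssim \omega _r(x,0)$ would force $e^{(|r|-\rho )|x|^2}$ to be bounded. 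So Remark \ref{extensionRemTo11} does not extend Theorem 3.4 of \cite{To11} to these weights, and your sandwich $M^\infty _{(\omega _r\sigma _N)}\subseteq M(\omega _r,\mascB )\subseteq M^\infty _{(\omega _r\sigma _{-N})}$ is unjustified precisely where you need it; restricting to small $|r|$ does not help, since any $r<0$ already produces Gaussian decay in the $x$-variable at the boundary.

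The repair is in the paper and is the reason Section \ref{sec3} exists: Theorem \ref{ABComplete} extends the correspondence between $M(\omega ,\mascB )$ and $A(\omega ,\mascB )$ of \cite[Theorem 3.4]{To11} to \emph{arbitrary} weights, with no growth or decay restriction. With that, one performs your sandwich on the analytic side, where \cite[Theorem 3.2]{To11}, as it is used in the proof of Proposition \ref{AnSpacesEmb}, requires only that the family $\{ \omega _r\sigma _N\} _{N\in \mathbf Z}$ be admissible in the sense of Definition \ref{defweights}, i.e. contained in $\mascP _Q(\rr {2d})$ --- the local condition \eqref{modrelax}$'$ --- which Gaussian-decay weights do satisfy. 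This combination (\cite[Theorem 3.2]{To11} plus Theorem \ref{ABComplete}) is exactly what the paper uses for the analogous statement \eqref{PilModEmb}$'$ in Section \ref{sec5}, where the weights $\omega _\ep$ with $\ep >1$ also decay like Gaussians. Two smaller points: for the directions of your argument that start from a given $f$, only the definitional identity $\nm {V_\phi f\, \omega _r}{\mascB }=\nm {\mathfrak V_df}{B(\omega _r,\mascB )}$ is needed rather than surjectivity; and the Gaussian lies in $\Sigma _{t_0}^{s_0}(\rr d)$ only when \emph{both} $s_0>1/2$ \emph{and} $t_0>1/2$, not ``or'' --- harmless in your application, since for $s,t>1/2$ you may choose both parameters strictly above $1/2$.
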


\par

\section{Some consequences and further remarks}\label{sec6}

\par

In this section we obtain further properties of the spaces in previous sections.
First we deduce invariance properties under fractional Fourier transforms. Thereafter
we obtain invariance properties under actions of linear partial differential operators
with polynomial coefficients. Then we show some ideas on how to define
spaces like $\bsySig _t^s(\rr d)$, $\bsycalS _{\! t}^s(\rr d)$ and their duals, and in
the end we summarize the boundedness properties of the spaces in tables.

\par

\subsection{Some remarks on fractional Fourier
transforms}\label{subsec7.1}

\par

Next we use 
the previous results to deduce some invariance properties for
fractional Fourier transforms of element in certain modulation
spaces.

\par

If $f\in L^2(\rr d)$, then it is proved already in \cite{B1} that
$$
(\mathfrak V_d(\mascF f))(z) = (\mathfrak V_df)(-iz) = (\mathfrak V_df)(e^{-i \frac \pi 2}z).
$$
The same arguments show that the latter equality still holds after the assumption
$f\in L^2(\rr d)$ is relaxed into $f\in \maclH _0'(\rr d)$.
For this reason, the \emph{fractional Fourier transform}, $\mascF _rf$ of $f$
of order $r\in \mathbf R$, is defined by the formula
$$
(\mathfrak V_d(\mascF _rf))(z) = (\mathfrak V_df)(e^{-ir\frac \pi 2}z),\qquad z\in \cc d
$$
(cf. Section 3b in \cite{B1}, or \cite{ZaGa} and the references therein).
More generally, the
\emph{partial fractional Fourier transform}, $\mascF _{\mabfr}f$ of $f$ of order
$\mabfr = (r_1,\dots ,r_d)\in \rr d$ is defined by the formula
$$
(\mathfrak V_d(\mascF _{\mabfr} f))(z) = (\mathfrak V_df)
(e^{-ir_1 \frac \pi 2}z_1,\dots ,e^{-ir_d \frac \pi 2}z_d),\qquad z\in \cc d.
$$
or equivalently,
\begin{equation}\label{PartFracFT}
\begin{gathered}
\mascF _{\mabfr} \equiv \mathfrak V_d^{-1} \circ T_{\mabfr} \circ \mathfrak V_d,
\\[1ex]
\text{where}\quad
(T_{\mabfr}F)(z) \equiv  F(e^{-ir_1\frac \pi 2}z_1,\dots ,e^{-ir_d\frac \pi 2}z_d)
\end{gathered}
\end{equation}
when $F\in \bsycalA _0'(\cc d)$.

\par

Certain parts of (1) and (2) in the following proposition are well-known (cf. \cite{B1,B2}).
In order to be self-contained we here present a proof.

\par

\begin{prop}\label{propFracFT}
Let $s\in \overline{\mathbf R _\flat}$, $\mabfr\in \rr d$, $p\in (0,\infty ]$ and let
$\omega$ be a weight on $\rr {2d}$ such that
$$
\omega (x,\xi ) = \omega _0(|z_1|,\dots ,|z_d|),\quad z_j=x_j+i\xi _j\in
\mathbf C,\ j=1,\dots, d, 
$$
for some $\omega _0$. Then the following is true:
\begin{enumerate}
\item $\mascF _{\mabfr}$ is continuous and bijective on $\maclH
_s (\rr d)$, $\maclH _{0,s}(\rr d)$ and $\mascS (\rr d)$, and
their duals;

\vrum

\item $\mascF _{\mabfr}h_\alpha = e^{-i\scal {\mabfr}\alpha \frac \pi 2}h_\alpha$ for every
$\alpha \in \nn d$, and
$\mascF _{\mabfr} \circ H = H\circ \mascF _{\mabfr}$ on $\bsycalS
_{\! 0}'(\rr d)$;

\vrum

\item $\mascF _{\mabfr}$ is isometric and bijective on
$M^p_{(\omega )}(\rr d)$. Moreover,
if $N$ is an integer and $f\in \maclH _{\flat _1} '(\rr d)$, then
$$
\nm {H^N(\mascF _{\mabfr}f)}{M^p_{(\omega )}}
= \nm {H^Nf}{M^p_{(\omega )}}.
$$
\end{enumerate}
\end{prop}

\par

\begin{proof}
By \eqref{BargmannHermite} and \eqref{PartFracFT} we get
\begin{equation*}
(\mathfrak V_d(\mascF _{\mabfr}h_\alpha ))(z)= \frac {e^{-i\scal \alpha
{\mabfr}\frac \pi 2}z^\alpha } {(\alpha !)^{\frac 12}}
= e^{-i\scal \alpha {\mabfr}\frac \pi 2}(\mathfrak V_dh_\alpha )(z),
\end{equation*}
which gives $\mascF _{\mabfr}h_\alpha = e^{-i\scal {\mabfr}\alpha \frac \pi 2}
h_\alpha$ (see also \cite{B1}).

\par

The assertion (1) now follows from the latter equality,
Theorems \ref{calJChar}, \ref{calJ0Char} and \ref{PilSpacesChar},
Theorem \ref{PilSpacesChar} and duality.

\par

Let $T_{\mabfr}$ be as in \eqref{PartFracFT}, $f\in \maclH _{\flat _1} '(\rr d)$,
and let $F=\mathfrak V_df$. By applying
the Bargmann transform on $H(\mascF _{\mabfr}f)$ we get
\begin{multline*}
\mathfrak V_d(H(\mascF _{\mabfr}f))(z)
= \sum _{j=1}^d (2z_j\partial _j(T_{\mabfr}F)(z)+d(T_{\mabfr}F)(z))
\\[1ex]
= \sum _{j=1}^d (  T_{\mabfr}(2z_j\partial _jF)(z)+d(T_{\mabfr}F)(z) )
\\[1ex]
= T_{\mabfr} \left (\left (\sum _{j=1}^d (2z_j\partial _j+d)\right )F\right )(z)
= \mathfrak V_d(\mascF _{\mabfr}(Hf))(z),
\end{multline*}
and (2) follows.

\par

We only prove (3) in the case $p<\infty$. The case $p=\infty$ follows
by similar arguments and is left for the reader.

\par

By identifying $(x,\xi )\in \rr {2d}$ with $x+i\xi \in \cc d$, it follows
from the assumptions that $(T_\mabfr ^{-1}\omega )=\omega$. This gives
\begin{multline*}
\nm {\mascF _{\mabfr}f}{M^p_{(\omega )}}^p
= \nm {\mathfrak V_d(\mascF _{\mabfr}f)}{A^p_{(\omega )}}^p
\\[1ex]
=  2^d(2\pi )^{-p\frac d2}\int _{\cc d} |(T_{\mabfr}F)(z)e^{-\frac {|z|^2}2}
\omega (\sqrt 2\overline z)|
^p\, d\lambda (z)
\\[1ex]
=  2^d(2\pi )^{-p\frac d2}\int _{\cc d} |F(z)e^{-\frac {|z|^2}2}
\omega (\sqrt 2\overline z)|^p\, d\lambda (z)
\\[1ex]
= \nm {\mathfrak V_d(f)}{A^p_{(\omega )}}^p
= \nm f{M^p_{(\omega )}}^p,
\end{multline*}
and (3) follows. The proof is complete.
\end{proof}

\par

\subsection{Distribution theory properties}\label{subsec7.2}

\par

Next we discuss distribution theory properties of Pilipovi{\'c} spaces and their
duals, and start with the following.

\par

\begin{prop}\label{PropMultByPol}
Let $a(x,\xi )$ be a polynomial on $\rr {2d}$ and let $s\in \overline{\mathbf R_\flat}$.
Then the following is true:
\begin{enumerate}
\item the operator $a(x,D)$ on $\mascS (\rr d)$ is uniquely extendable
to a continuous operator on $\maclH _0'(\rr d)$;

\vrum

\item the operator $a(x,D)$ on $\maclH _0'(\rr d)$ restricts to a continuous
map on any of the spaces $\maclH _{0,s}(\rr d)$ and $\maclH _s(\rr d)$,
and their duals.
\end{enumerate}
\end{prop}

\par

\begin{proof}
Assume that $f$ belongs to
\begin{equation}\label{HspacesHere}
\maclH _{0,s}(\rr d)\quad \text{or}\quad \maclH _{s} (\rr d),
\end{equation}
and has the Hermite expansion \eqref{fHermite}. If $a(x,D)$ is the
creation or annihilation operator with respect to the coordinate
$x_j$, then it follows by straight-forward computations that
$$
c_\alpha (a(x,D)f) =c_\beta (f), 
$$
where
$$
\beta _k=
\begin{cases}
\alpha _j\pm 1,\quad k=j
\\[1ex]
\alpha _k\phantom{\pm 1},\quad k\neq j
\end{cases}
$$
for some choice of plus or minus. By straight-forward computations
it follows that if
$$
|c_\alpha (f)|\lesssim e^{-r|\alpha |^{\frac 1{2s}}}
\quad \text{or}\quad
|c_\alpha (f)|\lesssim r^{|\alpha |}(\alpha !)^{-\frac 12},
$$
for some or for every $r>0$, then $c_\alpha (x,D)f$ obeys
the same type of estimate as $c_\alpha (f)$ does.
Hence, the creation and annihilation operators are continuous on
any of the spaces in \eqref{HspacesHere}.

\par

The result now follows in general from the facts that any operator
$a(x,D)$ with polynomial symbol is a superposition of the operators
$f\mapsto x_j\cdot f$ and $f\mapsto D_jf$, $j=1,\dots ,d$, and that
any of the latter operators are linear combinations of creation and
annihilation operators
\end{proof}

\par

\begin{cor}
Let $s\in \overline{\mathbf R _\flat}$. Then the spaces in
\eqref{AflatIdentities1}--\eqref{AflatIdentities5} and
\eqref{AsIdentities2}--\eqref{AsIdentities8}
are invariant under multiplication of analytic polynomials and under
complex differentiations.
\end{cor}

\par

\begin{proof}
The result follows from Proposition \ref{PropMultByPol} and the fact
that the creation and annihilation operators with respect to the coordinate
$x_j$ carry over to multiplication and differentiation, respectively, with  
respect to the coordinate $z_j$, by the Bargmann transform.
\end{proof}

\par

Proposition \ref{PropMultByPol} shows that several partial differential operators
possess suitable mapping properties on Pilipovi{\'c} spaces.
On the other hand, the following consequence of Theorem \ref{PilSpacesChar}
shows that important properties, valid for other
test function spaces and their distribution spaces, are violated.
Here a test-function space $V$
and its dual $V'$ are called test-function modules if the multiplication
$(f,g)\mapsto f\cdot g$ is continuous from $V\times V$ to $V$,
and thereby extendable to continuous mappings from $V'\times V$ to
$V$ and from $V\times V'$ to $V$.

\par

\begin{prop}\label{CorLackInvariance}
Let $f$ be a Gauss function and let $s\in \overline{\mathbf R _\flat}$.
Then the following is true:
\begin{enumerate}
\item $f \in \maclH _{0,\frac 12} (\rr d)$, if and only if
$f(x)=Ce^{-\frac {|x|^2}2+L(x)}$ for some (complex-valued)
linear form $L$ on $\rr d$ and constant $C$;

\vrum

\item $\maclH _{0,s}(\rr d)$ and $\maclH _{0,s}'(\rr d)$ are neither
invariant under dilations nor test-function modules when $0<s\le \frac 12$;

\vrum

\item $\maclH _s(\rr d)$ and $\maclH _s'(\rr d)$ are neither
invariant under dilations nor test-function modules when $0\le s< \frac 12$.
\end{enumerate}
\end{prop}

\par

\begin{proof}
It suffices to prove (1). Let $\phi$ be as in \eqref{phidef}, and set
$$
\phi _{A,L} (x) = e^{-\frac 12\scal {Ax}x +L(x)},
$$
when $A$ is a symmetric $d\times d$ matrix
with positive real part, and $L$ is a linear form on $\rr d$
with values in $\mathbf C$. 
If $f=\phi _{I,L}$ for some (complex-valued)
linear form $L$ on $\rr d$, then
$$
|(V_\phi f)(x,\xi )| = C_0 e^{-\frac 14(|x-x_0|^2+|\xi -\xi _0|^2)},
$$
for some fixed $x_0,\xi _0\in \rr d$ and constant $C_0$. Since
the right-hand side can be estimated by $C_\ep
e^{-\frac 14(1-\ep )(|x|^2+|\xi |^2)}$, for every $\ep >0$, it follows that
$f \in \bsySig (\rr d)$ in this case, in view of Remark \ref{Rem:LinkSTFT}.

\par

Next assume that $f$ is a Gaussian and belongs to
$\bsySig (\rr d)$. Then $f=C\phi _{A,L}$,
for some constant $C$, symmetric $d\times d$ matrix
$A$ with positive real part, and linear form $L$ on $\rr d$
with values in $\mathbf C$. Since $\bsySig$ is invariant under
the Fourier transform, it follows that
$$
\widehat f = C_1\phi _{A^{-1},L_0}
$$
belongs to $\bsySig (\rr d)$ for some choice of linear form $L_0$. 

\par

Let
\begin{equation}\label{AjDef}
A_1 = A(I+A)^{-1}
\quad \text{and}\quad
A_2 = (I+A)^{-1}.
\end{equation}
By straight-forward computations we get
\begin{align*}
(V_{\phi}f) (x,0) &= C_1\phi _{A,L} *\phi (x) = C_2 \phi _{A_1,L_1}(x)
\intertext{and}
(V_{\phi}\widehat f) (\xi ,0) &= C_3\phi _{A^{-1},L_0} *\phi (\xi )
= C_4 \phi _{A_2,L_2}(\xi ),
\end{align*}
for some constants $C_1,\dots ,C_4$ and linear forms
$L_1$ and $L_2$. Since $f,\widehat f\in \bsySig (\rr d)$, Theorem
\ref{PilSpacesChar}, Remark \ref{Rem:LinkSTFT} and the latter
formulas imply that
$$
\operatorname{Re} (\scal {A_1x}x)\ge \frac {|x|^2}2
\quad \text{and}\quad
\operatorname{Re} (\scal {A_2x}x)\ge \frac {|x|^2}2.
$$
Hence, if $\lambda _1,\dots ,\lambda _d$ are the eigenvalues
of $A$, it follows from the latter estimates and \eqref{AjDef}
that
$$
\operatorname{Re} \big ( (1+\lambda _j )^{-1} \big ) \ge 2^{-1}
\quad \text{and}\quad
\operatorname{Re} \big ( (1+\lambda _j ^{-1} )^{-1} \big ) \ge 2^{-1},
$$
when $j=1,\dots ,d$, which is true only when $\lambda _j=1$ for every
$j$.

\par

In fact, by letting $s_j$ and $t_j$ be the real and imaginary parts of
$\lambda _j$, the latter inequalities become
$$
s_j^2+t_j^2 \le 1 \le s_j^2-3t_j^2 ,
$$
which implies that $s_j=\pm 1$ and $t_j=0$. Since $s_j>0$, it follows
that $\lambda _j=s_j=1$, and the result follows.
\end{proof}

\par

\subsection{A broader class of Pilipovi{\'c} spaces}\label{subsec7.3}

\par

The mapping properties explained in Theorem \ref{PilSpacesChar}
can serve as a source of inspiration of defining broad families of function
and distribution spaces, related to Pilipovi{\'c} spaces and their spaces of
distributions.

\par

In what follows, let $\kappa _1$ and $\kappa _2$ be given by
\eqref{Eq:kappaDef}, and let
$$
\vartheta _{r,s}(x)
=
\begin{cases}
\displaystyle{e^{r(\log \eabs x)^{\frac 1{1-2s}}}},&  s<\frac 12,
\\[1ex]
\displaystyle{e^{r|x|^{\kappa _1(\sigma )}}},&  s=\flat _\sigma ,
\\[1ex]
\displaystyle{e^{\frac {r|x|^2}{2(1+r)}}},&  s=\frac 12,
\\[1ex]
\displaystyle{e^{\frac {|x|^2}2-r|x|^{\frac 1s}}},&  s>\frac 12,
\end{cases}
\qquad 
\vartheta _{r,s}^*(x)
=
\begin{cases}
\displaystyle{e^{r|x|^{\kappa _2(\sigma )}}},&  s=\flat _\sigma ,
\\[1ex]
\displaystyle{e^{\frac {(1+r)|x|^2}{2r}}},& s=\frac 12,
\\[1ex]
\displaystyle{e^{\frac {|x|^2}2+r|x|^{\frac 1s}}},& s>\frac 12,
\end{cases}
$$
\begin{alignat*}{3}
\omega _{r,s,t}(z) &= \vartheta _{r,t}(x)\vartheta _{r,s}(\xi ), &
\quad &\text{when}\ s,t\in \overline{\mathbf R_\flat},\ z=x+i\xi ,\  x,\xi \in \rr d,
\intertext{and}
\omega _{r,s,t}^*(z) &= \vartheta _{r,t}^*(x)\vartheta _{r,s}^*(\xi ), &
\quad &\text{when}\ s,t\in \overline{\mathbf R_\flat},\ z=x+i\xi ,\  x,\xi \in \rr d.
\end{alignat*}
Furthermore, let
\begin{alignat*}{2}
\bsycalA _t^s(\cc d) &=
\sets {F\in A(\cc d)}{|F(z)|\lesssim \omega _{r,s,t}(z),\ \text{for some}\ r>0},
& &
\\[1ex]
\bsycalA _{0,t}^s(\cc d) &=
\sets {F\in A(\cc d)}{|F(z)|\lesssim \omega _{r,s,t}(z),\ \text{for every}\ r>0},
& &
\intertext{and let $(\bsycalA _t^s)'(\cc d)$ and $(\bsycalA _{0,t}^s)'(\cc d)$
be the duals of $\bsycalA _t^s(\cc d)$ and $\bsycalA _{0,t}^s(\cc d)$,
respectively, $s,t\in \overline{\mathbf R _\flat}$. Then}
(\bsycalA _t^s)'(\cc d) &=
\sets {F\in A(\cc d)}{|F(z)|\lesssim \omega _{r,s,t}^*(z),\ \text{for every}\ r>0},
& \ s,t&> \flat _1
\intertext{and}
(\bsycalA _{0,t}^s)'(\cc d) &=
\sets {F\in A(\cc d)}{|F(z)|\lesssim \omega _{r,s,t}^*(z),\ \text{for some}\ r>0},
& \ s,t&>\flat _1.
\end{alignat*}

\par

We now let $\bsycalS _{\! t}^s(\rr d)$, $\bsySig _t^s(\rr d)$, $(\bsycalS _{\! t}^s)'(\rr d)$ and
$(\bsySig _t^s)'(\rr d)$ be the counter images of the latter spaces, i.{\,}e.,
\begin{equation}\label{ExtPilSpaces}
\begin{alignedat}{2}
\bsycalS _{\! t}^s(\rr d) &\equiv \mathfrak V_d^{-1}(\bsycalA _t^s(\cc d)),
&\quad
\bsySig _t^s(\rr d) &\equiv \mathfrak V_d^{-1}(\bsycalA _{0,t}^s(\cc d)),
\\[1ex]
(\bsycalS _{\! t}^s)'(\rr d) &\equiv \mathfrak V_d^{-1}((\bsycalA _t^s)'(\cc d)),
&\quad
(\bsySig _t^s)'(\rr d) &\equiv \mathfrak V_d^{-1}((\bsycalA _{0,t}^s)'(\cc d)),
\end{alignedat}
\end{equation}
for suitable $s$ and $t$.

\par

Evidently, from the definitions it follows that
\begin{alignat*}{5}
\bsycalS _{\! s}^s &= \bsycalS _{\! s},&\quad
\bsySig _s^s &= \bsySig _s,&\quad
(\bsycalS _{\! s}^s)' &= \bsycalS _{\! s}' &
\quad &\text{and} &\quad
(\bsySig _s^s)' &= \bsySig _s',
\intertext{for admissible $s\in \overline{\mathbf R_\flat}$, and}
\bsycalS _{\! s}^t &= \maclS _{s}^t,&\quad
\bsySig _s^t &= \Sigma _s^t,&\quad
(\bsycalS _{\! s}^t)' &= (\maclS _{s}^t)' &
\quad &\text{and} &\quad
(\bsySig _s^t)' &= (\Sigma _s^t)',
\end{alignat*}
when the spaces on the right-hand sides are non-trivial. An interesting question
concerns of finding convenient
descriptions of the spaces in \eqref{ExtPilSpaces}. This seems to be non-trivial
from the point of view of Hermite series expansions, in view of
\cite{GraLecPilRod}.

\par

By the link between the Bargmann transform and short-time Fourier transform
\eqref{bargstft1} when $\phi$ is given by \eqref{phidef}, it follows that we may
identify the spaces in \eqref{ExtPilSpaces} by suitable estimates on the
involved elements. More precisely, let $\widetilde \vartheta _{r,s}$
and $\widetilde \vartheta _{r,s}^*$ be the modifications
of $\vartheta _{r,s}$ and $\vartheta _{r,s}^*$, given by
\begin{align}
\label{tildethetaDef}
\widetilde \vartheta _{r,s}(x)
&=
\begin{cases}
\displaystyle{e^{-\frac {|x|^2}4+r(\log \eabs x)^{\frac 1{1-2s}}}},&  s<\frac 12,
\\[1ex]
\displaystyle{e^{-\frac {|x|^2}4+r|x|^{\kappa _1(\sigma )}}},&  s=\flat _\sigma ,
\\[1ex]
\displaystyle{e^{\frac {|x|^2}{4(1+r)}}},&  s=\frac 12,
\\[1ex]
\displaystyle{e^{-r|x|^{\frac 1s}}},&  s>\frac 12,
\end{cases}
\intertext{and}
\label{tildethetaDef2}
\widetilde \vartheta _{r,s}^*(x)
&=
\begin{cases}
\displaystyle{e^{-\frac {|x|^2}4+r|x|^{\kappa _2(\sigma )}}},& 
s=\flat _\sigma ,
\\[1ex]
\displaystyle{e^{r|x|^{\frac 1s}}},& s\ge \frac 12,
\end{cases}
\end{align}
then we get the following.

\par

\begin{prop}\label{STFTPilSpChar}
Let $r>0$, $\kappa _1$ $\kappa _2$ and $\phi$ be as in \eqref{phidef} and
\eqref{Eq:kappaDef}, $\widetilde \vartheta _{r,s}$ for $s\in \overline{\mathbf R_\flat}$ and
$\widetilde \vartheta _{r,s}^*$ for
$s\in {\mathbf R_\flat}$ be given by \eqref{tildethetaDef} and \eqref{tildethetaDef2}, and let
$$
\omega _{r,s,t}(x,\xi )\equiv \widetilde \vartheta _{r,t}(x)\widetilde \vartheta _{r,s}(\xi)
\quad \text{and}\quad
\omega _{r,s,t}^*(x,\xi )\equiv \widetilde \vartheta _{r,t}^*(x)\widetilde \vartheta _{r,s}^*(\xi).
$$
Then the following is true:
\begin{enumerate}
%
\item if $s,t\ge 0$ ($s,t>0$), then $f\in \bsycalS _{\! t}^s(\rr d)$
($f\in \bsySig _{t}^s(\rr d)$), if and only if 
$$
|(V_\phi f)(x,\xi )| \lesssim \omega _{r,s,t}(x,\xi )
$$
for some (for every) $r>0$;

\vrum

\item if $s,t>\flat _1$, then $f\in (\bsycalS _{\! t}^s)'(\rr d)$
($f\in (\bsySig _{t}^s)'(\rr d)$), if and only if 
$$
|(V_\phi f)(x,\xi )| \lesssim \omega _{r,s,t}^*(x,\xi )
$$
for every (for some) $r>0$.
\end{enumerate}
\end{prop}

\par

\subsection{Tables of properties for function and distribution
spaces}\label{subsec7.4}

\par

In the following tables we list the boundedness properties of the spaces
considered in previous sections. Here we
let $N_1= \frac N{\log N}$. The first tabular lists
the test function spaces and their properties.
\\[4ex]
\begin{savenotes}
{\footnotesize{
{
\begin{tabular}{l||c|c|c|cl}
$\maclH _s(\rr d)$/$\maclH _{0,s}(\rr d)$   & \ $|H^Nf| \lesssim $\  & \ $|c_\alpha (f)|\lesssim$
\  & \ $|\mathfrak V_df(z)|\lesssim$ 
& \textbf{References}
\\[1ex]
\hline
                    & & & & 
\\
$s>\frac 12$  & $R^NN!^{2s}$
&  $e^{-R|\alpha |^{\frac 1{2s}}}$ & 
$e^{\frac {|z|^2}2-R|z|^{\frac 1{s}}}$ & 
Thm. \ref{BargGSMapProp}\footnote{See also \cite{CapRodToft}.}
\\[2ex]
$\maclH _{\frac 12}(\rr d)$ &  $R^NN!$
&  $e^{-R|\alpha |}$ & 
$e^{(1-R) \frac {|z|^2}2}$ & Thm.
\ref{BargGSMapProp}
\\[2ex]
$\maclH _{0,\frac 12}(\rr d)$ & $R^NN!$
&  $e^{-R|\alpha |}$ & 
$e^{R|z|^2}$ & Thm. \ref{PilSpacesChar}
\\[2ex]
$s=\flat _\sigma$, $\sigma <\infty$ &
See\footnote{If $\sigma =1$, then $f$ should here fulfill
$|H^Nf|\lesssim R^{N_1} N_1^{N-N_1}$ (cf. \cite{FeGaTo2}).}
&  ${R^{|\alpha |}}{({\alpha !})^{-\frac 1{2\sigma}}}$ &
$e^{R|z|^{\frac {2\sigma}{\sigma +1}}}$ & 
Thm. \ref{flatSpacesChar}
\\[2ex]
$s<\frac 12$  & $R^NN!^{2s}$
&  $e^{-R|\alpha |^{\frac 1{2s}}}$ & 
$e^{R(\log \eabs z)^{\frac 1{1-2s}}}$ & Thm.
\ref{PilSpacesChar}\footnote{See also \cite{FeGaTo2}.}
\end{tabular}}
\\[2ex]
\begin{center}
{Table 1: The test function spaces $\maclH _s(\rr d)$ and
$\maclH _{0,s}(\rr d)$. For  $\maclH _s$ ($\maclH _{0,s}$), the
above estimates should hold for some (for every) $R>0$.}
\end{center}
}}
\end{savenotes}

\vspace{0.5cm}

\begin{savenotes}
{\footnotesize{
{
\begin{tabular}{l||c|c|cl}
$\maclH _s'(\rr d)$/$\maclH _{0,s}'(\rr d)$   & \ $|c_\alpha (f)|\lesssim$
\  & \ $|\mathfrak V_df(z)|\lesssim$ 
& \textbf{References}
\\[1ex]
\hline
                    & & & & 
\\
$s>\frac 12$  
&  $e^{R|\alpha |^{\frac 1{2s}}}$ & 
$e^{\frac {|z|^2}2+R|z|^{\frac 1{s}}}$ & 
Thm. \ref{BargGSMapProp}$^2$
\\[2ex]
$\maclH _{\frac 12}'(\rr d)$ 
&  $e^{R|\alpha |}$ & 
$e^{(1+R) \frac {|z|^2}2}$ & Thm.
\ref{BargGSMapProp}
\\[2ex]
$\maclH _{0,\frac 12}'(\rr d)$ 
&  $e^{R|\alpha |}$ & 
$e^{R|z|^2}$ & Thm. \ref{flatSpacesCharDual}
\\[2ex]
$s=\flat _\sigma$, $\sigma >1$ 
&  ${R^{|\alpha |}}{({\alpha !})^{\frac 1{2\sigma}}}$ &
$e^{R|z|^{\frac {2\sigma}{\sigma -1}}}$ & 
Thm. \ref{flatSpacesCharDual}
\\[2ex]
$s=\flat _\sigma$, $\sigma \le 1$ 
&  ${R^{|\alpha |}}{({\alpha !})^{\frac 1{2\sigma}}}$ &
\emph{No conditions} & 
Thm. \ref{flatSpacesCharDual}
\\[2ex]
$s<\frac 12$
&  $e^{-R|\alpha |^{\frac 1{2s}}}$ & 
\emph{No conditions} & 
Thm. \ref{PilSpacesCharDual}
\end{tabular}}
\\[2ex]
\begin{center}
{Table 2: The distribution spaces $\maclH _s'(\rr d)$ and
$\maclH _{0,s}'(\rr d)$. For  $\maclH _s'$ ($\maclH _{0,s}'$), the
above estimates should hold for every (for some) $R>0$.}
\end{center}
}}
\end{savenotes}

%
%

\par



\begin{thebibliography}{99}
\bibitem{B1} V. Bargmann \emph{On a Hilbert space of analytic
functions and an associated integral transform}, Comm. Pure
Appl. Math., \textbf{14} (1961), 187--214.

\bibitem{B2} V. Bargmann \emph{On a Hilbert space of analytic functions
 and an associated integral transform. Part II. A family of related
function spaces. Application to distribution theory.}, Comm. Pure
Appl. Math., \textbf{20} (1967), 1--101.

\bibitem{BoTo} {P. Boggiatto, J. Toft}
\emph{Embeddings and compactness for generalized Sobolev-Shubin
spaces and modulation spaces}, Appl. Anal. (3) \textbf{84} (2005),
269--282.

\bibitem{CapRodToft} M. Cappiello, L. Rodino, J. Toft \emph{Radial symmetric elements
and the Bargmann transform}, Integral Transform. Spec. Funct. \textbf{25} (2014),
756--764.

\bibitem{ChuChuKim} {J. Chung, S.-Y. Chung, D. Kim},
\emph{Characterizations of the Gelfand-Shilov spaces via
Fourier transforms}, Proc. Amer. Math. Soc.
\textbf{124} (1996), 2101–2108.

\bibitem{DaRu} A. Dasgupta, M. Ruzhansky \emph{Gevrey functions
and ultradistributions on compact Lie groups and homogeneous spaces},
Bull. Sci. Math. \textbf{138} (2014), 756--782.

\bibitem{Eij} S. J. L. Eijndhoven \emph{Functional analytic
characterizations of the Gelfand-Shilov spaces $S^\beta _\alpha$},
Nederl. Akad. Wetensch. Indag. Math. \textbf{49} (1987), 133--144. 

\bibitem{F1}  H. G. Feichtinger \emph{Modulation spaces on locally
compact abelian groups. Technical report}, {University of
Vienna}, Vienna, 1983; also in: M. Krishna, R. Radha,
S. Thangavelu (Eds) Wavelets and their applications, Allied
Publishers Private Limited, NewDehli Mumbai Kolkata Chennai Hagpur
Ahmedabad Bangalore Hyderbad Lucknow, 2003, pp. 99--140.

\bibitem{FeGaTo2} C. Fernandez, A. Galbis, J. Toft
\emph{The Bargmann transform and powers of harmonic oscillator on
Gelfand-Shilov subspaces}, RACSAM (to appear, see also arxiv:1507.04850).

\bibitem{FGW}  {H. G. Feichtinger, K. H. Gr{\"o}chenig, D. Walnut}
\emph{Wilson bases and modulation spaces}, {Math. Nach.} \textbf{155}
(1992), 7--17.

\bibitem{GS} I. M. Gelfand, G. E. Shilov, \emph{Generalized functions, II-III},
Academic Press, NewYork London, 1968.

\bibitem{GrPiRo1} T. Gramchev, S. Pilipovi{\'c}, L. Rodino
\emph{Classes of degenerate elliptic operators in Gelfand-Shilov
spaces {\rm {in: L. Rodino, M. W. Wong (eds)}} New developments in
pseudo-differential operators}, Operator Theory: Advances and
Applications \textbf{189}, Birkh{\"a}user, Basel, 2009, pp. 15--31.

\bibitem{GrPiRo2} T. Gramchev, S. Pilipovi{\'c}, L. Rodino
\emph{Eigenfunction expansions in $\mathbb R^n$}, Proc. Amer. Math.
Soc. \textbf{139} (2011), 4361--4368.

\bibitem{GraLecPilRod} T. Gramchev, A. Lecke, S. Pilipovi{\'c}, L. Rodino
\emph{Gelfand-Shilov type spaces through Hermite expansions
{\rm {in: S. Pilipovi{\'c}, J. Toft (eds)}} Pseudo-Differential Operators and Generalized
Functions}, Operator Theory: Advances and
Applications \textbf{245}, Birkh{\"a}user, Basel Heidelberg NewYork
Dordrecht London, 2015, pp. 95--105.

\bibitem{Gc2} {K. H. Gr{\"o}chenig} \newblock \emph{Foundations of
Time-Frequency Analysis},
\newblock Birkh{\"a}user, Boston, 2001.

\bibitem{GZ} K. Gr{\"o}chenig, G. Zimmermann
\emph{Spaces of test functions via the STFT} J. Funct. Spaces Appl. \textbf{2}
(2004), 25--53.

\bibitem{Ho1} L. H{\"o}rmander \emph{The Analysis of Linear
Partial Differential Operators}, vol {I--III},
Springer-Verlag, Berlin Heidelberg NewYork Tokyo, 1983, 1985.

\bibitem{JaEi} A.M.E.M. Janssen, S.J.L. Eijndhoven
\emph{Spaces of type \rm{W}, growth of Hermite coefficients,
Wigner distribution, and Bargmann transform}, J. Math. Anal.
Appl. \textbf{152} (1990), 368--390.

\bibitem{LozPer} Z. Lozanov Crvenkovi{\'c}, D. Peri{\v s}i{\'c}
\emph{Hermite expansions of elements of Gelfand Shilov spaces
in quasianalytic and non quasianalytic case},
Novi Sad J. Math. \textbf{37} (2007), 129--147. 

\bibitem{NiRo} F. Nicola, L. Rodino \emph{Global pseudo-differential
calculus on Euclidean spaces}, Pseudo-Differential Operators. Theory
and Applications \textbf{4} Birkh{\"a}user Verlag, Basel, 2010.

\bibitem{Pil1} S. Pilipovi\'c \emph{Generalization of Zemanian spaces
of generalized functions which
have orthonormal series expansions},
SIAM J. Math. Anal. \textbf{17} (1986), 477–484.
%
%

\bibitem{Pil2}
S. Pilipovi\'c \emph{Tempered ultradistributions},
Boll. U.M.I. \textbf{7} (1988), 235--251.

%
%
%
%
%
%
%

\bibitem{SiTo} M. Signahl, J. Toft \emph{Mapping properties
for the Bargmann transform on modulation spaces},
J. Pseudo-Differ. Oper. Appl. \textbf{3} (2012), 1--30.

\bibitem{Teof}
N. Teofanov \emph{Ultradistributions and time-frequency analysis}. In
``Pseudo-differential operators and related topics'', 173--192, Oper.
Theory Adv. Appl., \textbf{164}, Birkh\"auser, Basel, 2006.

\bibitem{To8} J. Toft \emph{Continuity
properties for modulation spaces with applications to
pseudo-differential calculus, II}, {Ann. Global Anal. Geom.},
\textbf{26} (2004), 73--106.

\bibitem{To11} J. Toft \emph{The Bargmann transform on modulation and
Gelfand-Shilov spaces, with applications to Toeplitz and pseudo-differential
operators},  J. Pseudo-Differ. Oper. Appl.  \textbf{3}  (2012), 145--227.

\bibitem{Toft12} J. Toft \emph{Multiplication properties in Gelfand-Shilov
pseudo-differential calculus {\rm {in: S. Molahajlo, S. Pilipovi{\'c}, J. Toft,
M. W. Wong (eds)}} Pseudo-Differential Operators, Generalized
Functions and  Asymptotics,} Operator Theory: Advances and
Applications \textbf{231}, Birkh{\"a}user, Basel Heidelberg NewYork
Dordrecht London, 2013, pp. 117--172.

\bibitem{ZaGa} A. Zayed, A. Garcia
\emph{New sampling formulae for the fractional Fourier transform},
Signal processing \textbf{77} (1999), 111--114.

\end{thebibliography}
\end{document}